\title{Arborescences of Covering Graphs}
\begin{document}

\begin{abstract}
     An \emph{arborescence} of a directed graph $\Gamma$ is a spanning tree directed toward a particular vertex $v$. The arborescences of a graph rooted at a particular vertex may be encoded as a polynomial $A_v(\Gamma)$ representing the sum of the weights of all such arborescences. The arborescences of a graph and the arborescences of a covering graph $\tilde{\Gamma}$ are closely related. Using \emph{voltage graphs} to construct arbitrary regular covers, we derive a novel explicit formula for the ratio of $A_v(\Gamma)$ to the sum of arborescences in the lift $A_{\tilde{v}}(\tilde{\Gamma})$ in terms of the determinant of Chaiken's voltage Laplacian matrix, a generalization of the Laplacian matrix. Chaiken's results on the relationship between the voltage Laplacian and vector fields on $\Gamma$ are reviewed, and we provide a new proof of Chaiken's results via a deletion-contraction argument.
\end{abstract}
\maketitle
\section{Introduction}

In this paper, we examine the relationship between arborescences of a graph and the arborescences of its covering graph. An \emph{arborescence} rooted at a vertex $v$ in a directed graph $\Gamma$ is a weighted spanning tree of $\Gamma$ that is directed towards $v$.  We define $A_v(\Gamma)$ to be the sum of the weights of all arborescences in $\Gamma$ rooted at $v$.  Using the Matrix Tree Theorem \cite[Theorem~5.6.8]{stanley2}, we can compute $A_v(\Gamma)$ as a minor of the Laplacian matrix of $\Gamma$.

It is natural to ask to what extent the arborescences of a graph $\Gamma$ characterize the arborescences of a covering graph $\tilde{\Gamma}$.  Every arborescence of $\Gamma$ lifts to a partial arborescence of $\tilde{\Gamma}$, and this lift is unique if the root of the arborescence in $\tilde{\Gamma}$ is fixed.  Conversely, every arborescence of $\tilde{\Gamma}$ descends to a subgraph of $\Gamma$ containing an arborescence. These properties lead us to ask whether there is a meaningful relationship between $A_v(\Gamma)$ and $A_{\tilde{v}}(\tilde{\Gamma})$, where $\tilde{v}$ is a lift of $v$. We show that $A_v(\Gamma)$ always divides $A_{\tilde{v}}(\tilde{\Gamma})$, meaning that each arborescence of $\Gamma$ corresponds to a set of arborescences of $\tilde{\Gamma}$. The primary goals of this paper are to derive an explicit formula for the ratio $\frac{A_{\tilde{v}}(\tilde{\Gamma})}{A_{v}(\Gamma)}$ and to examine cases where this ratio is especially computationally nice. 

The ratio $\frac{A_{\tilde{v}}(\tilde{\Gamma})}{A_{v}(\Gamma)}$ first arose in Galashin and Pylyavskyy's study of \emph{$R$-systems} \cite{rsystems}. The $R$-system is a discrete dynamical system on a edge-weighted strongly connected simple directed graph $\Gamma = (V, E, \wt)$ whose state vector $X = (X_v)_{v \in V}$ evolves to its next state $X' = (X_v')_{v \in V}$ according to the following relation:
\begin{align}\label{rsys}
    \sum_{(u, v) \in E} \wt(u, v) \frac{X_v}{X_{u}'} = \sum_{(v, w) \in E} \wt(v,w) \frac{X_w}{X_v'}
\end{align}

In (\ref{rsys}), the vertex $v$ is fixed, and the vertices $u$ and $w$ range over the start and endpoints of the ingoing and outgoing edges of $v$, respectively. This system is homogeneous in both $X$ and $X'$, so we consider solutions in projective space. Galashin and Pylyavskyy determined all solutions $X'$ of this equation as a function of $X$:

\begin{theorem}\cite{rsystems}
    The system given by equation (\ref{rsys}) has solution
    \begin{align*}
        X'_v = \frac{X_v}{A_v(\Gamma)}.
    \end{align*}
    This solution is unique up to scalar multiplication, yielding a unique solution to the $R$-system in $\mathbb{P}^{|V|}$. 
\end{theorem}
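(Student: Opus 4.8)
The plan is to recast the defining relation (\ref{rsys}) as a homogeneous linear system whose coefficient matrix is a weighted Laplacian of $\Gamma$, and then to read off the solution from the Matrix Tree Theorem. Concretely, I would clear denominators in (\ref{rsys}), regroup the terms, and pass to the vector $Y = (Y_v)_{v\in V}$ tracking the ratios $Y_v = X_v/X'_v$. After the regrouping, each equation of (\ref{rsys}) should take the form
\begin{align*}
    \sum_{(u,v)\in E}\wt(u,v)\,Y_u \;=\; Y_v\sum_{(v,w)\in E}\wt(v,w),
\end{align*}
i.e. $\mathcal{L}^{\top}Y = 0$, where $\mathcal{L} = D - A$ is the Laplacian of $\Gamma$ with $A$ its weighted adjacency matrix and $D$ the diagonal matrix of weighted out-degrees. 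I expect this reduction to be the main obstacle: it is elementary but fiddly, and one has to track the monomials in the $X_v$ carefully so that every cross term $X_\bullet/X'_\bullet$ collapses to a constant multiple of a single $Y_\bullet$, and one has to fix the orientation convention so that the matrix that emerges is precisely $\mathcal{L}^{\top}$ and the arborescences that appear are those directed toward $v$ --- the convention under which \cite[Theorem~5.6.8]{stanley2} computes $A_v(\Gamma)$.

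Granting this reduction, I would solve $\mathcal{L}^{\top}Y = 0$ by the standard Matrix Tree argument. Because $\Gamma$ is strongly connected, $\mathcal{L}$ has rank $|V| - 1$: its rows sum to zero, so its right kernel is spanned by the all-ones vector $\mathbf{1}$, whence every column of the adjugate $\mathrm{adj}(\mathcal{L})$ is a multiple of $\mathbf{1}$, which forces all rows of $\mathrm{adj}(\mathcal{L})$ to coincide with a single left null vector of $\mathcal{L}$. The diagonal entries of $\mathrm{adj}(\mathcal{L})$ are the principal minors of $\mathcal{L}$, which the Matrix Tree Theorem identifies with the $A_v(\Gamma)$; hence $\ker \mathcal{L}^{\top}$ is exactly the line through $(A_v(\Gamma))_{v\in V}$. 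Combining with the first step gives $Y_v = c\,A_v(\Gamma)$ for one scalar $c$, that is, $X'_v = \tfrac{1}{c}\cdot\tfrac{X_v}{A_v(\Gamma)}$, which is the claimed solution.

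Finally, uniqueness up to scalar is immediate from the same rank computation: the solution space of $\mathcal{L}^{\top}Y = 0$ is one-dimensional, so $X'$ is pinned down up to an overall scalar, i.e. a single point of $\mathbb{P}^{|V|}$; and strong connectedness guarantees $A_v(\Gamma) \neq 0$ for every $v$, so this point has no vanishing coordinate and is genuinely well-defined. Thus, modulo the bookkeeping of the first step, the theorem is a direct consequence of the Matrix Tree Theorem and the rank of the Laplacian of a strongly connected digraph.
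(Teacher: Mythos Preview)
The paper does not prove this theorem; it is quoted from \cite{rsystems} as motivation, with no argument supplied. So there is no paper proof to compare against, and I can only assess your argument on its own terms.

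Your overall strategy --- rewrite (\ref{rsys}) as a homogeneous linear system whose matrix is a transposed Laplacian, then read off the one-dimensional kernel via the Matrix Tree Theorem and the adjugate --- is the right one, and it is indeed how Galashin and Pylyavskyy proceed. But your specific reduction is wrong. Setting $Y_v = X_v/X'_v$ does \emph{not} turn (\ref{rsys}) into $\mathcal{L}^\top Y = 0$ for the Laplacian $\mathcal{L}$ of $\Gamma$: the left side of (\ref{rsys}) carries terms $X_v/X'_u$, which under your substitution become $(X_v/X_u)\,Y_u$, and the right side becomes $Y_v\sum_{(v,w)}\wt(v,w)\,X_w/X_v$. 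The stray ratios $X_v/X_u$ and $X_w/X_v$ do not go away. You flag the reduction as ``fiddly'' and then grant it; in fact it fails as written.

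The reduction that does work is $Z_v = 1/X'_v$: then (\ref{rsys}) is exactly $\tilde{\mathcal{L}}^\top Z = 0$, where $\tilde{\mathcal{L}}$ is the Laplacian of the graph obtained from $\Gamma$ by reweighting each edge $(u,v)$ to $\wt(u,v)\,X_v$. Your adjugate/Matrix Tree argument, applied to $\tilde{\mathcal{L}}$, then gives $Z_v \propto A_v$ for this $X$-reweighted graph, i.e.\ $X'_v$ is the reciprocal of an $X$-dependent arborescence sum. That is the form of the result actually established in \cite{rsystems}; the statement here suppresses the reweighting. (Sanity check: on the two-vertex graph with edges $1\to 2$ of weight $a$ and $2\to 1$ of weight $b$, the literal formula $X'_v = X_v/A_v(\Gamma)$ gives $X'_1/X'_2 = aX_1/(bX_2)$, whereas (\ref{rsys}) forces $X'_1/X'_2 = aX_2/(bX_1)$.) The remainder of your argument --- the rank computation, the identification of the kernel with the diagonal of the adjugate, and uniqueness in projective space --- is correct once it is run against $\tilde{\mathcal{L}}$ rather than $\mathcal{L}$.
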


However, we can see the value of $X'_v$ in equation (\ref{rsys}) depends only on the neighborhood of the vertex $v$. Thus, in the case of a covering graph $\tilde{\Gamma}$, we may find two solutions to the $R$-system: one by applying the previous theorem directly, and one by treating each vertex of $\tilde{\Gamma}$ locally like a vertex of $\Gamma$, and then applying the theorem. The two respective solutions are
\begin{align*}
    X_{\tilde{v}}' = \frac{X_{\tilde{v}}}{A_{\tilde{v}}(\tilde{\Gamma})} & &
    \text{and}& &
    X_{\tilde{v}}' = \frac{X_{\tilde{v}}}{A_{v}({\Gamma})}.
\end{align*}
Therefore, uniqueness of the solution implies that the vectors 
\begin{align*}
    \left(\frac{X_{\tilde{v}}}{A_{\tilde{v}}(\tilde{\Gamma})}\right)_{\tilde{v} \in \tilde{V}} & &
    \text{and} & &
    \left(\frac{X_{\tilde{v}}}{A_{v}({\Gamma})}\right)_{\tilde{v} \in \tilde{V}}
\end{align*}
are scalar multiples of each other, where $\tilde{v}$ is any lift of $v$. Equivalently: 
\begin{corollary}\label{Pasha}
When $\Gamma$ is strongly connected and simple, the ratio $\frac{A_{\tilde{v}}(\tilde{\Gamma})}{A_v(\Gamma)}$ is independent of the choice of vertex $v$ and of the choice of lift $\tilde{v}$.
\end{corollary}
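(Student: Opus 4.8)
The plan is to obtain the statement from the theorem of Galashin and Pylyavskyy together with the fact that a covering map is a local isomorphism, which is exactly the heuristic sketched above. Throughout I take for granted that $\tilde\Gamma$ is again strongly connected and simple — simplicity is immediate since a loop or multi-edge of $\tilde\Gamma$ would project to one of $\Gamma$, and strong connectivity holds because any two vertices of $\tilde\Gamma$ are joined by a directed path obtained by lifting a directed path in $\Gamma$ and correcting its endpoint within its fibre — so that the theorem applies to $\tilde\Gamma$ as well. Fix, for every vertex $\tilde x$ of $\tilde\Gamma$, the weight- and orientation-preserving bijection between the edges of $\tilde\Gamma$ incident to $\tilde x$ and the edges of $\Gamma$ incident to $\pi(\tilde x)$ provided by the covering projection $\pi$.

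The key step is a transfer principle: if $(\bar X,\bar X')$ is a solution of the $R$-system (\ref{rsys}) on $\Gamma$, then the pull-backs defined by $X_{\tilde x} := \bar X_{\pi(\tilde x)}$ and $X'_{\tilde x} := \bar X'_{\pi(\tilde x)}$ form a solution of the $R$-system on $\tilde\Gamma$. Indeed, the instance of (\ref{rsys}) at a vertex $\tilde x$ of $\tilde\Gamma$ involves only $X_{\tilde x}$, the values $X'_{\tilde u}$ over the in-neighbours $\tilde u$ of $\tilde x$, and the values $X_{\tilde w}$ over the out-neighbours $\tilde w$ of $\tilde x$; under the incidence bijection above, the two sides of this equation become the two sides of the instance of (\ref{rsys}) at $\pi(\tilde x)$ evaluated on $(\bar X,\bar X')$, which agree by hypothesis. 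This is the precise content of ``treating each vertex of $\tilde\Gamma$ locally like a vertex of $\Gamma$.''

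Now run the two constructions against the input $\bar X = \mathbf 1$ on $\Gamma$. By the theorem, $\bar X'_v = 1/A_v(\Gamma)$ is the projectively unique solution on $\Gamma$, so by the transfer principle $X'_{\tilde x} = 1/A_{\pi(\tilde x)}(\Gamma)$ is a solution of the $R$-system on $\tilde\Gamma$ with input $X = \mathbf 1$; and applying the theorem directly to $\tilde\Gamma$ with the same input gives the solution $X''_{\tilde x} = 1/A_{\tilde x}(\tilde\Gamma)$. By the uniqueness clause, $X'$ and $X''$ are projectively equal, i.e.\ there is a nonzero scalar $c$ with $A_{\tilde x}(\tilde\Gamma) = c\,A_{\pi(\tilde x)}(\Gamma)$ for every $\tilde x \in \tilde V$ (here one uses that the arborescence counts $A_{\pi(\tilde x)}(\Gamma)$ and $A_{\tilde x}(\tilde\Gamma)$ are nonzero, since a strongly connected digraph has an arborescence to each of its vertices). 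Since $c$ is the same for all $\tilde x$, the ratio $A_{\tilde v}(\tilde\Gamma)/A_v(\Gamma)$ equals $c$ for every vertex $v$ and every lift $\tilde v$, which is the claim.

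The step I expect to be the crux is the transfer principle of the second paragraph: it forces one to have a clean enough description of the voltage-graph cover — compatibility of orientations and of weights under the incidence bijection — that the two sides of (\ref{rsys}) can be matched term by term between $\tilde\Gamma$ and $\Gamma$. Beyond that, the only care needed is in checking that the theorem of Galashin and Pylyavskyy applies to $\tilde\Gamma$ (strong connectivity and simplicity of the cover) and that the relevant arborescence counts do not vanish, after which the argument is pure bookkeeping.
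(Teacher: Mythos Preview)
Your approach is exactly the paper's: both derive the corollary from the Galashin--Pylyavskyy uniqueness theorem by noting that the $R$-system equation at a vertex depends only on its neighbourhood, so pulling back the $\Gamma$-solution along $\pi$ yields a second solution on $\tilde\Gamma$ that must be projectively equal to the intrinsic one.

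One caveat: your justification that $\tilde\Gamma$ is strongly connected is not valid --- the trivial $k$-fold cover of $\Gamma$ is $k$ disjoint copies, and ``correcting its endpoint within its fibre'' is not an argument. The paper's discussion leaves this hypothesis implicit too; in the disconnected case $A_{\tilde v}(\tilde\Gamma)=0$ and the statement is vacuous, so no harm is done.
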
 

The existence of this invariance motivates finding an explicit formula for this ratio. 
The following is the main theorem of this paper.
\begin{theorem}\label{bigboi}
	Let $\Gamma = (V, E, \wt)$ be an edge-weighted  multigraph, and let $\tilde{\Gamma}$ be a $k$-fold cover of $\Gamma$ such that each lifted edge is assigned the same weight as its base edge. Denote by $\mathscr{L}(\Gamma)$ the voltage Laplacian of $\Gamma$. Then for any vertex $v$ of $\Gamma$ and any lift $\tilde{v}$ of $v$ in  $\tilde \Gamma$ of $\Gamma$, we have
	\begin{align*}
		\frac{A_{\tilde{v}}(\tilde{\Gamma})}{A_v(\Gamma)} = \frac{1}{k}{\det  [\mathscr{L}(\Gamma)}]_{\BZ[E]}.
	\end{align*}
\end{theorem}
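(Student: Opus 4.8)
The plan is to relate both sides to the Matrix Tree Theorem and to exploit the block structure that the covering graph imposes on its Laplacian. First I would fix the vertex $v$ and its lift $\tilde v$, and recall the Matrix Tree Theorem: $A_v(\Gamma) = \det L_v(\Gamma)$, where $L_v(\Gamma)$ is the reduced Laplacian obtained by deleting the row and column indexed by $v$, and similarly $A_{\tilde v}(\tilde\Gamma) = \det L_{\tilde v}(\tilde\Gamma)$. The key observation is that, for a $k$-fold regular cover described by a voltage assignment, the (reduced) Laplacian $L(\tilde\Gamma)$ is obtained from the voltage Laplacian $\mathscr{L}(\Gamma)$ by replacing each formal group-algebra element $g \in \BZ[E]$ by the $k\times k$ permutation matrix representing $g$ in the (left-)regular representation of the covering group — in other words, $L(\tilde\Gamma)$ is the image of $\mathscr{L}(\Gamma)$ under the representation $\rho_{\mathrm{reg}}$ applied entrywise. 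So the problem reduces to a statement about how determinants behave under this block-substitution.

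**The core computation.**

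The heart of the argument is the identity $\det \rho_{\mathrm{reg}}(M) = \prod_{\chi} (\det \rho_\chi(M))^{\dim \chi}$ for a matrix $M$ over the group algebra $\BZ[G]$, where the product runs over the irreducible representations $\chi$ of $G$ (in the abelian case, over the characters), coming from the decomposition of the regular representation. Applying this with $M = \mathscr{L}(\Gamma)$ (or its reduced version) expresses $\det L_{\tilde v}(\tilde\Gamma)$ as a product over characters of $\det$ of the twisted Laplacians $\mathscr{L}_\chi(\Gamma)$. The trivial character $\chi_0$ contributes exactly $\det L_v(\Gamma) = A_v(\Gamma)$, since specializing every voltage to $1$ collapses $\mathscr{L}(\Gamma)$ to the ordinary reduced Laplacian. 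Thus the ratio $A_{\tilde v}(\tilde\Gamma)/A_v(\Gamma)$ equals the product of $\det \mathscr{L}_\chi(\Gamma)$ over the nontrivial characters — and I need to identify this product with $\frac{1}{k}\det[\mathscr{L}(\Gamma)]_{\BZ[E]}$, i.e. with the full (unreduced) voltage-Laplacian determinant read in the group algebra, divided by $k$.

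**Matching the two expressions.**

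To make that last identification I would analyze $\det \mathscr{L}(\Gamma)$ directly in $\BZ[G]$: because every row of a Laplacian sums to zero, the all-ones vector (with group-algebra entry $1$) lies in the kernel after specializing to the trivial character, so $\det \mathscr{L}(\Gamma)$, expanded via the same character decomposition, has the trivial-character factor contributing $0$ — hence one must instead interpret $[\mathscr{L}(\Gamma)]_{\BZ[E]}$ as a suitable reduced or "group-algebra-valued" determinant whose definition the paper has presumably given (this is Chaiken's construction). The cleanest route is: show $\det[\mathscr{L}(\Gamma)]_{\BZ[E]}$, when pushed through each character $\chi$, equals $\det \mathscr{L}_\chi(\Gamma)$ for nontrivial $\chi$ and equals $k\cdot A_v(\Gamma)$ for the trivial one (the factor $k$ accounting for the $k$ sheets, or equivalently for the deleted root being one of $k$ lifts); then dividing by $k$ and multiplying over all characters recovers $A_{\tilde v}(\tilde\Gamma)$, matching the previous paragraph. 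Alternatively, and perhaps more in the spirit promised by the abstract, one runs a deletion-contraction induction: both $A_{\tilde v}(\tilde\Gamma)$ and $\frac1k\det[\mathscr{L}(\Gamma)]_{\BZ[E]}$ satisfy the same deletion-contraction recurrence in the edges of $\Gamma$ (deleting or contracting an edge downstairs corresponds to deleting or contracting its whole fiber upstairs), with a common base case of a graph with no edges.

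**Main obstacle.**

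I expect the principal difficulty to be bookkeeping around the non-commutativity and the precise meaning of $\det[\,\cdot\,]_{\BZ[E]}$: one must pin down the correct notion of determinant of a matrix over a (possibly noncommutative) group algebra so that the reduction step is valid, track how the choice of root $v$ and lift $\tilde v$ enters (Corollary~\ref{Pasha} guarantees independence, but the proof must not secretly use it), and correctly handle the weight-preserving condition on lifted edges so that the entrywise substitution $g \mapsto \rho_{\mathrm{reg}}(g)$ genuinely produces $L(\tilde\Gamma)$. Once the dictionary between group-algebra entries and their regular-representation blocks is set up carefully, the determinant factorization and the vanishing of the trivial-character contribution should make the formula fall out; the deletion-contraction alternative trades the representation theory for careful verification that the fiber-wise operations are compatible with the recurrence, which I would present as the new proof the abstract advertises.
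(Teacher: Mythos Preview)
Your representation-theoretic framing is morally close to the paper's argument, but the execution glosses over exactly the step where the real work lies, and it also only addresses regular covers whereas the theorem is stated for arbitrary $k$-fold covers.

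The paper does essentially what you call ``block substitution'': it exhibits an explicit change-of-basis matrix $S$ (sending each $v_i^{1}$ to $\beta_i=\sum_j v_i^{j}$ and fixing the other basis vectors) and shows that $SL(\tilde\Gamma)S^{-1}$ is block lower-triangular with $L(\Gamma)$ in the top-left $n\times n$ block and the matrix $[\mathscr{L}(\Gamma)]_{\BZ[E]}$ (which, in the regular case, is the restriction of scalars of the voltage Laplacian over the \emph{reduced} group algebra, hence exactly your ``product over nontrivial characters'') in the bottom-right $n(k-1)\times n(k-1)$ block. So far this is your plan. The gap is in your ``Matching the two expressions'' paragraph: you assert that the trivial-character piece ``contributes exactly $\det L_v(\Gamma)=A_v(\Gamma)$'' in one sentence and ``equals $k\cdot A_v(\Gamma)$'' in the next, without explaining which it is or why. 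The issue is that you need a \emph{minor} of $L(\tilde\Gamma)$, not its determinant, and deleting the row and column indexed by the single vertex $\tilde v=v_1^{1}$ does not commute with the change of basis $S$ (the vector $v_1^{1}$ is not one of the new basis vectors $\beta_1,v_1^{2},\dots$). The paper spends two lemmas on precisely this: first a combinatorial argument (building an auxiliary graph $\Gamma^{(2)}$ and classifying its arborescences) showing that replacing $v_1^{1}$ by $\beta_1$ scales the $(1,1)$-minor by exactly $k$, and then a trivial lemma that the remaining piece of $S$ fixes the $(1,1)$-minor. That factor of $k$ is the $1/k$ in the statement, and your sketch does not supply a mechanism for producing it.

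Two further remarks. First, $[\mathscr{L}(\Gamma)]_{\BZ[E]}$ is \emph{not} a group-algebra-valued determinant \`a la Chaiken; the paper defines it (Definition~\ref{lowerright}) as an explicit $n(k-1)\times n(k-1)$ integer-polynomial matrix built directly from the cover, which makes sense for non-regular covers where there is no voltage group at all. Your character decomposition cannot reach those cases. Second, the deletion-contraction argument you propose as an alternative is used in the paper only to reprove Chaiken's vector-field interpretation of $\det\mathscr{L}(\Gamma)$ (Theorem~\ref{NVF}), not for Theorem~\ref{bigboi} itself; making it work for the main theorem would require showing that $A_{\tilde v}(\tilde\Gamma)$ and $\tfrac1k\det[\mathscr{L}(\Gamma)]_{\BZ[E]}\cdot A_v(\Gamma)$ satisfy the same recursion in the edges of $\Gamma$, which is plausible but is not what the paper does and would need its own careful treatment of how contraction interacts with the fibre structure.
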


If $\tilde{\Gamma}$ is a regular cover, it is a derived cover by a group $G$ with $|G|=k$. In this case, in the above formula $\det  [\mathscr{L}(\Gamma)]_{\BZ[E]}$ is the determinant of $\mathscr{L}(\Gamma)$ as a $\BZ[E]$-linear transformation. We may evaluate this determinant by restriction of scalars (see Section \ref{sec: main-thm} for details). For an arbitrary cover (including non-regular ones), the matrix $ [\mathscr{L}(\Gamma)]_{\BZ[E]}$ can be determined concretely from the covering graph (Definition \ref{lowerright}), so the determinant can be explicitly computed.

When $\tilde{\Gamma}$ is a regular cover of prime order, we will be able to prove the following refinement in Section \ref{theprimecycliccase}:

\begin{corollary}\label{primecyclic}
	 Let $p$ be a prime, let $\Gamma = (V, E, \wt, \nu)$ be an edge-weighted $\BZ/p\BZ$-voltage directed multigraph, and let $\mathscr{L}(\Gamma)$ be its voltage Laplacian matrix. Then for any vertex $v$ of $\Gamma$ and any lift $\tilde{v}$ of $v$ in the derived graph $\tilde{\Gamma}$ of $\Gamma$, we have
	\begin{align*}
	\frac{A_{\tilde{v}}(\tilde{\Gamma})}{A_v(\Gamma)} &= \frac{1}{|G|}N_{\BQ(\zeta_p)/\BQ}\left(\det \left[\mathscr{L}(\Gamma)\right]\right)\\
	&= \frac{1}{|G|} \prod_{i = 1}^{p -1} \det [\sigma_i (\mathscr{L}(\Gamma))]
	\end{align*}
	where $N_{\BQ(\zeta_p)/\BQ}\left(\det \left[\mathscr{L}(\Gamma)\right]\right)$ denotes the field norm of $\BQ(\zeta_p)$ over $\BQ$, naturally extended to a norm on $\BQ(\zeta_p)[E]$, and $\sigma_i$ is the field automorphism on $\BQ(\zeta_p)$ mapping $\zeta_p \mapsto \zeta_p^i$. 
\end{corollary}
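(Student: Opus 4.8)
The corollary is a refinement of Theorem~\ref{bigboi} in the special case where the covering group is $G = \BZ/p\BZ$, so the natural strategy is to start from the formula $\frac{A_{\tilde v}(\tilde\Gamma)}{A_v(\Gamma)} = \frac{1}{k}\det[\mathscr{L}(\Gamma)]_{\BZ[E]}$ and compute the right-hand side more explicitly using the representation theory of $\BZ/p\BZ$. The key observation is that when $\tilde\Gamma$ is the derived cover associated to a $\BZ/p\BZ$-voltage assignment $\nu$, the voltage Laplacian $\mathscr{L}(\Gamma)$ is naturally a matrix over the group ring $\BZ[E][\BZ/p\BZ]$ (each entry records not just a weight but the voltage group element along the corresponding edge). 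The determinant $[\mathscr{L}(\Gamma)]_{\BZ[E]}$ that appears in Theorem~\ref{bigboi} is the determinant after restriction of scalars from $\BZ[E][\BZ/p\BZ]$ down to $\BZ[E]$, i.e. the determinant of the $k|V'|\times k|V'|$ block matrix obtained by replacing each group element $g\in\BZ/p\BZ$ by its $k\times k$ regular-representation (permutation) matrix.

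**Main steps.** First I would set up the group-algebra decomposition: over $\BQ(\zeta_p)$ (or after tensoring $\BZ[E]$ up to $\BQ(\zeta_p)[E]$), the group algebra $\BQ(\zeta_p)[\BZ/p\BZ]$ splits as a product of $p$ one-dimensional pieces indexed by the characters $\chi_i\colon 1\mapsto \zeta_p^i$ for $i = 0,1,\dots,p-1$. Correspondingly the regular representation decomposes as the direct sum of these characters, and so the restriction-of-scalars matrix $[\mathscr{L}(\Gamma)]_{\BZ[E]}$ becomes, after this change of basis, block-diagonal with blocks $\sigma_i(\mathscr{L}(\Gamma))$ — here $\sigma_i(\mathscr{L}(\Gamma))$ means the ordinary $|V'|\times|V'|$ matrix obtained by evaluating the group-ring entries of $\mathscr{L}(\Gamma)$ under the character $\chi_i$, i.e. substituting $\zeta_p^i$ for the generator. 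Second, taking determinants gives $\det[\mathscr{L}(\Gamma)]_{\BZ[E]} = \prod_{i=0}^{p-1}\det[\sigma_i(\mathscr{L}(\Gamma))]$. Third, I would identify the $i=0$ factor: $\sigma_0$ is the trivial character, which collapses every voltage to the identity, so $\det[\sigma_0(\mathscr{L}(\Gamma))]$ is just the ordinary reduced Laplacian determinant of $\Gamma$, which by the Matrix Tree Theorem equals $A_v(\Gamma)$; this is exactly the factor that cancels against the $A_v(\Gamma)$ in the denominator, confirming the divisibility and leaving the product over $i = 1,\dots,p-1$. Fourth, I would recognize that the remaining product $\prod_{i=1}^{p-1}\det[\sigma_i(\mathscr{L}(\Gamma))]$ is, by definition of the field norm and the fact that $\{\sigma_i : i=1,\dots,p-1\}$ is exactly the Galois group $\mathrm{Gal}(\BQ(\zeta_p)/\BQ)$ acting on the single "interesting" block $\det[\mathscr{L}(\Gamma)] \in \BQ(\zeta_p)[E]$, equal to $N_{\BQ(\zeta_p)/\BQ}(\det[\mathscr{L}(\Gamma)])$, with the norm extended entrywise/coefficientwise to $\BQ(\zeta_p)[E]$. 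Combining, $\frac{A_{\tilde v}(\tilde\Gamma)}{A_v(\Gamma)} = \frac{1}{k}\prod_{i=0}^{p-1}\det[\sigma_i(\mathscr{L}(\Gamma))] \big/ A_v(\Gamma) = \frac{1}{|G|}\prod_{i=1}^{p-1}\det[\sigma_i(\mathscr{L}(\Gamma))] = \frac{1}{|G|}N_{\BQ(\zeta_p)/\BQ}(\det[\mathscr{L}(\Gamma)])$.

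**Where the difficulty lies.** The conceptual content is entirely in the block-diagonalization step — making precise that restriction of scalars along $\BZ[E]\hookrightarrow\BZ[E][\BZ/p\BZ]$ followed by base change to $\BQ(\zeta_p)$ turns $[\mathscr{L}(\Gamma)]_{\BZ[E]}$ into $\bigoplus_i \sigma_i(\mathscr{L}(\Gamma))$ — and in checking that the determinant is unchanged under the required change of basis (the discrete Fourier transform matrix over $\BQ(\zeta_p)$ is invertible, so conjugating by it preserves determinants, and the determinant lands in $\BQ(\zeta_p)[E]$ even though we computed over an extension). I expect the main obstacle to be bookkeeping: being careful that $\det[\mathscr{L}(\Gamma)]$ for $i\neq 0$ genuinely lies in $\BQ(\zeta_p)[E]$ and that the Galois action $\sigma_i$ on coefficients commutes with taking the determinant of the $|V'|\times|V'|$ matrix (it does, since $\sigma_i$ is a ring homomorphism), so that the product over $i=1,\dots,p-1$ really is a Galois norm. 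A secondary subtlety worth a remark is that $p$ being prime is what makes $\BQ(\zeta_p)/\BQ$ Galois of degree exactly $p-1$ with Galois group $\{\sigma_i\}$; for composite $k$ the cyclotomic factor would further decompose and the clean "single norm" statement would need to be replaced by a product of norms over the divisors of $k$.
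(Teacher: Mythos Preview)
There is a genuine gap, and it stems from a misreading of the paper's conventions. In this paper the voltage Laplacian $\mathscr{L}(\Gamma)$ is defined over the \emph{reduced} group algebra $\overline{\BZ[G]}[E]$, not the full group ring $\BZ[G][E]$; for $G=\BZ/p\BZ$ this reduced algebra is $\BZ(\zeta_p)$, which has $\BZ$-rank $p-1$. Consequently $[\mathscr{L}(\Gamma)]_{\BZ[E]}$ is an $(p-1)n\times(p-1)n$ matrix (see Definition~\ref{lowerright} and Example~\ref{restrictionexample}), and there is no trivial-character block in its diagonalization. Your block decomposition over all $p$ characters therefore computes the wrong object. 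Worse, the patch you propose for the extra block is false: applying the trivial character $\sigma_0$ to the voltage Laplacian produces the ordinary Laplacian $L(\Gamma)$, whose determinant is $0$ (its rows sum to zero), not $A_v(\Gamma)$; the Matrix Tree Theorem concerns a \emph{minor} of $L(\Gamma)$, not its full determinant. So the ``cancellation against the $A_v(\Gamma)$ in the denominator'' does not happen, and in fact there is no such denominator on the right-hand side of Theorem~\ref{bigboi} to cancel against.

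The paper's proof sidesteps all of this by working directly with $\mathscr{L}(\Gamma)$ as a matrix over $B=\BZ(\zeta_p)[E]$ and invoking a result of Silvester: for commutative $A\subset B\subset\mathrm{Mat}_n(A)$ and $M\in\mathrm{Mat}_m(B)$ one has $\det_A[M]=\det_A(\det_B[M])$. With $A=\BZ[E]$ this gives $\det_{\BZ[E]}[\mathscr{L}(\Gamma)]=\det_{\BZ[E]}\bigl(\det_{\BZ(\zeta_p)[E]}[\mathscr{L}(\Gamma)]\bigr)$, and the outer $\det_{\BZ[E]}$ of a single element of $\BZ(\zeta_p)[E]$ is by definition the field norm $N_{\BQ(\zeta_p)/\BQ}$. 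Your character-theoretic idea can be salvaged and is then essentially an explicit version of this: diagonalize the $\BZ(\zeta_p)$-module (not the full group ring) over a splitting field to get blocks $\sigma_i(\mathscr{L}(\Gamma))$ for $i=1,\dots,p-1$ only, and recognize the resulting product of determinants as the Galois norm. But as written, the introduction of an $i=0$ block and the claim $\det[\sigma_0(\mathscr{L}(\Gamma))]=A_v(\Gamma)$ are both incorrect.
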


In the case $|G|=2$, we obtain a conjecture by Galashin and Pylyavskyy:

\begin{corollary}\label{2foldtheorem}
    Let $\Gamma = (V, E, \wt, \nu)$ be an edge-weighted $\BZ/2\BZ$-voltage directed multigraph, and let $\mathscr{L}(\Gamma)$ be its voltage Laplacian matrix. Then for any vertex $v$ of $\Gamma$ and any lift $\tilde{v}$ of $v$ in the derived graph $\tilde{\Gamma}$ of $\Gamma$, we have
    $$
        	\frac{A_{\tilde{v}}(\tilde{\Gamma})}{A_v(\Gamma)} = \frac{1}{2}\det \left[\mathscr{L}(\Gamma)\right].
    $$
\end{corollary}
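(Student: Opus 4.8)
The plan is to obtain Corollary~\ref{2foldtheorem} as the special case $p = 2$ of Corollary~\ref{primecyclic}, which I may assume. A $\BZ/2\BZ$-voltage directed multigraph is in particular a $\BZ/p\BZ$-voltage directed multigraph for the prime $p = 2$, and its derived graph $\tilde{\Gamma}$ is precisely the $2$-fold cover to which Corollary~\ref{primecyclic} applies, with $G = \BZ/2\BZ$ and $|G| = 2$; so the hypotheses transfer verbatim and it remains only to simplify the right-hand side.

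Next I would specialize the formula of Corollary~\ref{primecyclic} to $p = 2$. Here $\zeta_2 = -1$, so $\BQ(\zeta_2) = \BQ$ and the extension $\BQ(\zeta_p)/\BQ$ is trivial; hence the field norm $N_{\BQ(\zeta_2)/\BQ}$, and its extension to $\BQ(\zeta_2)[E] = \BQ[E]$, is the identity map. Equivalently, in the product form the index $i$ ranges only over $i = 1$, and $\sigma_1$ is the identity automorphism, so the product collapses to the single factor $\det[\sigma_1(\mathscr{L}(\Gamma))] = \det[\mathscr{L}(\Gamma)]$. Either description shows that the formula of Corollary~\ref{primecyclic} becomes $\frac{A_{\tilde{v}}(\tilde{\Gamma})}{A_v(\Gamma)} = \frac{1}{2}\det[\mathscr{L}(\Gamma)]$, which is the claimed identity; in particular this recovers the expression conjectured by Galashin and Pylyavskyy.

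It is worth recording explicitly what $\det[\mathscr{L}(\Gamma)]$ means here: the entries of $\mathscr{L}(\Gamma)$ lie in the group ring $\BZ[\BZ/2\BZ]$, and the bracket indicates that we pass to the sign representation, sending a generator $g$ of $\BZ/2\BZ$ to $-1$, thereby replacing each entry $a + bg$ by $a - b$ and producing an ordinary matrix over $\BZ[E]$ whose determinant is the quantity in the statement. Since everything at this stage is a formal specialization, there is no real obstacle; the only point requiring care is that the bracket notation $[\,\cdot\,]$ be used consistently across Theorem~\ref{bigboi}, Corollary~\ref{primecyclic}, and the present statement, i.e.\ that restriction of scalars along $\BZ/2\BZ \to \{\pm 1\}$ agrees (after inverting $2$) with the $\BZ[E]$-linear determinant $\det[\mathscr{L}(\Gamma)]_{\BZ[E]}$ of Theorem~\ref{bigboi}. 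If instead a self-contained proof were wanted, I would rerun the argument behind Corollary~\ref{primecyclic} with $p = 2$: the cyclotomic bookkeeping degenerates, and one is left directly factoring $\det[\mathscr{L}(\Gamma)]_{\BZ[E]}$ through the two one-dimensional representations of $\BZ/2\BZ$, namely the trivial one and the sign one.
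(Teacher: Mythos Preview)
Your proposal is correct and matches the paper's own argument essentially verbatim: the paper simply states that Corollary~\ref{2foldtheorem} follows from Corollary~\ref{primecyclic} by setting $p=2$ and noting that $\sigma_1$ is the identity, which is exactly your first two paragraphs. One small clarification on your third paragraph: in the paper the brackets in $\det[\mathscr{L}(\Gamma)]$ carry no special meaning beyond ``the matrix whose determinant is taken''; the passage $g\mapsto -1$ you describe is already built into the definition of $\mathscr{L}(\Gamma)$ via the \emph{reduced} group algebra $\overline{\BZ[\BZ/2\BZ]}\cong\BZ$, so no further specialization is needed.
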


Corollary \ref{2foldtheorem} follows directly from Corollary \ref{primecyclic} by setting $p=2$ and noting that $\sigma_1$ is the identity.

Theorem \ref{bigboi} allows us to easily conclude nice properties about the ratio:

\begin{corollary} \label{integrality}
If the edge weights of $\Gamma$ are indeterminates then the ratio $\frac{A_{\tilde{v}}(\tilde{\Gamma})}{A_v(\Gamma)}$ is a homogeneous polynomial in the edge weights with integer coefficients.
\end{corollary}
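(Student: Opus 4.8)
The plan is to read everything off from Theorem~\ref{bigboi}, which already identifies the ratio $R := A_{\tilde v}(\tilde\Gamma)/A_v(\Gamma)$ with $\frac1k\det[\mathscr{L}(\Gamma)]_{\BZ[E]}$, and then to verify the two asserted properties --- homogeneity, and integrality of the coefficients --- separately. One may assume $A_v(\Gamma)\neq 0$ (otherwise the ratio is undefined and the statement vacuous) and $A_{\tilde v}(\tilde\Gamma)\neq 0$ (otherwise $R=0\in\BZ[E]$ and there is nothing to prove).

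For polynomiality and homogeneity, I would note that every entry of the voltage Laplacian $\mathscr{L}(\Gamma)$, hence of the matrix $[\mathscr{L}(\Gamma)]_{\BZ[E]}$ obtained from it by restriction of scalars, is a $\BZ$-linear combination of the edge weights: a diagonal entry is a sum of weights of incident edges, and an off-diagonal entry is minus such a sum twisted by group elements. So each entry is homogeneous of degree $1$ in the edge weights, the determinant of this $N\times N$ matrix is a homogeneous polynomial of degree $N$ with integer coefficients, and by Theorem~\ref{bigboi} the ratio $R$ is $\frac1k$ times it. (Homogeneity also follows combinatorially: every arborescence of $\Gamma$ uses exactly $|V|-1$ edges and every arborescence of $\tilde\Gamma$ uses exactly $k|V|-1$ edges, and since each lifted edge carries the weight of its base edge, $A_v(\Gamma)$ and $A_{\tilde v}(\tilde\Gamma)$ are homogeneous of degrees $|V|-1$ and $k|V|-1$; hence $R$, once known to be a polynomial, is homogeneous of degree $(k-1)|V|$.) Either way, $R$ is a homogeneous polynomial in the edge weights with coefficients in $\frac1k\BZ$.

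The remaining point, which I expect to carry the real content, is to show that $R\in\BZ[E]$, equivalently that $k$ divides $\det[\mathscr{L}(\Gamma)]_{\BZ[E]}$ coefficientwise. The key combinatorial input is that $A_v(\Gamma)$ is a \emph{primitive} element of $\BZ[E]$: since the edge weights are independent indeterminates and an arborescence is determined by its edge set, $A_v(\Gamma) = \sum_T \prod_{e \in T} \wt(e)$ (summing over arborescences $T$ rooted at $v$) is a sum of distinct squarefree monomials each with coefficient $1$, so the gcd of its coefficients is $1$. Given this, I would finish by Gauss's lemma: from $A_v(\Gamma)\cdot R = A_{\tilde v}(\tilde\Gamma)$ with $R\in\BQ[E]$ and $A_v(\Gamma), A_{\tilde v}(\tilde\Gamma) \in \BZ[E]$, multiplicativity of content on $\BQ[E]$ shows that the content of $A_{\tilde v}(\tilde\Gamma)$ equals the content of $A_v(\Gamma)$ times the content of $R$, i.e.\ just the content of $R$; since the left-hand side is a positive integer, so is the content of $R$, whence $R\in\BZ[E]$. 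Combined with the previous paragraph, this proves the corollary.

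The main obstacle is exactly this last step. Theorem~\ref{bigboi} alone only places $R$ in $\frac1k\BZ[E]$, and the cancellation of the $\frac1k$ is not a formality: it is precisely the divisibility $A_v(\Gamma)\mid A_{\tilde v}(\tilde\Gamma)$ in $\BZ[E]$, obtained here from primitivity of $A_v(\Gamma)$ together with Gauss's lemma. A direct proof that $k$ divides every coefficient of $\det[\mathscr{L}(\Gamma)]_{\BZ[E]}$ would essentially have to reprove this divisibility, so routing through Gauss's lemma is the economical route.
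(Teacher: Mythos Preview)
Your proposal is correct and follows essentially the same route as the paper: invoke Theorem~\ref{bigboi} to place the ratio in $\BQ[E]$, observe that $A_v(\Gamma)$ is primitive because distinct arborescences give distinct monomials each with coefficient~$1$, apply Gauss's lemma to obtain integrality, and read off homogeneity from the fact that every arborescence of a given graph has the same number of edges. Your write-up is somewhat more explicit (noting the determinant already gives homogeneity and that the ratio a priori lies in $\tfrac1k\BZ[E]$), but the argument is the same.
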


\begin{proof}
Since ${\det  [\mathscr{L}(\Gamma)}]_{\BZ[E]} \in \BZ[E]$, Theorem \ref{bigboi} tells us that $\frac{A_{\tilde{v}}(\tilde{\Gamma})}{A_v(\Gamma)} \in \BQ[E]$. Note that every coefficient of $A_v(\Gamma)$ is 1 since every edge of $\Gamma$ has a different weight. Therefore, $A_v(\Gamma)$ is a primitive polynomial over the integers, so by Gauss' lemma, $\frac{A_{\tilde{v}}(\tilde{\Gamma})}{A_v(\Gamma)}\in\BZ[E]$.

Homogeneity follows from the fact that every arborescence of a given graph has the same number of edges.
\end{proof}

We furthermore conjecture the following (see Section \ref{sec: future}):
\begin{conjecture}\label{posconjecture}
    Let $\Gamma$ be a directed graph, $\tilde{\Gamma}$ a $k$-cover of $\Gamma$, $v$ a vertex of $\Gamma$ and $\tilde{v}$ a lift of $v$ in $\tilde{\Gamma}$.  If the edge weights of $\Gamma$ are indeterminates then the polynomial $\frac{A_{\tilde{v}}(\tilde{\Gamma})}{A_v(\Gamma)}$ has positive coefficients.
\end{conjecture}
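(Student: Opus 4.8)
We sketch a possible approach to Conjecture~\ref{posconjecture}. By Corollary~\ref{integrality} we already know that $Q := \frac{A_{\tilde{v}}(\tilde{\Gamma})}{A_v(\Gamma)}$ is a polynomial with integer coefficients, but integrality does not imply positivity: a ratio of two polynomials with nonnegative coefficients can have negative coefficients, as $\frac{1+x^3}{1+x} = 1-x+x^2$ shows. Any proof must therefore realize $Q$ as the weight generating function of an honest family of combinatorial objects rather than manipulate it as a formal quotient, and the plan is to extract such objects from the \emph{descent} of arborescences of $\tilde{\Gamma}$ to $\Gamma$.

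In more detail, let $\pi\colon\tilde{\Gamma}\to\Gamma$ be the covering projection. If $\tilde{T}$ is an arborescence of $\tilde{\Gamma}$ rooted at $\tilde{v}$, its image $\pi(\tilde{T})$ is a sub-multigraph of $\Gamma$ in which every vertex still has a directed path to $v$ (project a path in $\tilde{T}$), with out-degree $k$ at every vertex other than $v$ and out-degree $k-1$ at $v$; in particular $\pi(\tilde{T})$ contains at least one spanning arborescence of $\Gamma$ rooted at $v$. I would fix once and for all a rule selecting a canonical such arborescence $\phi(\tilde{T})\subseteq\pi(\tilde{T})$, yielding a map
\[
\phi\colon\ \{\text{arborescences of }\tilde{\Gamma}\text{ rooted at }\tilde{v}\}\ \longrightarrow\ \{\text{arborescences of }\Gamma\text{ rooted at }v\}
\]
such that $\wt(\phi(\tilde{T}))$ divides $\wt(\tilde{T})$ as monomials, for every $\tilde{T}$. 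Then
\[
A_{\tilde{v}}(\tilde{\Gamma}) \;=\; \sum_{T}\wt(T)\sum_{\tilde{T}\in\phi^{-1}(T)}\frac{\wt(\tilde{T})}{\wt(T)},
\]
where the outer sum ranges over arborescences $T$ of $\Gamma$. The conjecture follows once one shows that the inner sum is the \emph{same} polynomial for every $T$: that common polynomial is then forced to equal $Q$ and is visibly a sum of monomials. I would try to prove this fiber-independence by surgery: given $\tilde{T}$ with $\phi(\tilde{T})=T$ and another arborescence $T'$ of $\Gamma$, cut out the ``$\phi$-visible'' copy of $T$ inside $\tilde{T}$ and splice in the corresponding lift of $T'$, leaving the remaining edges of $\tilde{T}$---the data recording how the $k$ sheets are interconnected---untouched, thus building a bijection $\phi^{-1}(T)\to\phi^{-1}(T')$ that multiplies each weight by $\wt(T')/\wt(T)$.

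The main obstacle is already in the choice of $\phi$: the obvious rules fail. The ``identity-sheet'' rule, which sends $w\ne v$ to $\pi$ of the out-edge of the identity lift of $w$ in $\tilde{T}$, need not produce an acyclic functional graph on $\Gamma$, because the out-edges of the identity lifts along a $\Gamma$-cycle need not close up into a cycle of $\tilde{T}$; and ``the first arborescence inside $\pi(\tilde{T})$ under a fixed edge ordering'' generally breaks fiber-independence---one can already find small examples where $\pi(\tilde{T})$ contains two arborescences $T,T'$ with $\wt(\tilde{T})/\wt(T)\ne\wt(\tilde{T})/\wt(T')$, so assigning $\tilde{T}$ to the wrong one destroys the count. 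Pinning down the rule for which the surgery closes up is the crux, and I expect it to be entangled with the following structural point: in the regular case the deck group $G$ (of order $k$) should act on the ``descent data'' attached to a fixed $T$, but this action is \emph{not free}---the full preimage $\pi^{-1}(T)$ of an arborescence $T$ is $G$-invariant---so the division by $k$ appearing in Theorem~\ref{bigboi} cannot be carried out by naively quotienting, and the $G$-fixed and partially $G$-fixed configurations must be paired up or otherwise accounted for by hand. A complementary attack starts from Theorem~\ref{bigboi}: expand $\det[\mathscr{L}(\Gamma)]_{\BZ[E]}$ by the all-minors Matrix--Tree Theorem into a sum over forest-type configurations of $\tilde{\Gamma}$, making it manifestly a nonnegative sum, and then show that this polynomial is $k$ times a nonnegative polynomial; but this too reduces to the same non-free $G$-action, and for non-regular covers there is no deck group to lean on, so I regard ``dividing by $k$ combinatorially, uniformly in the cover'' as the genuine content of the conjecture. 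Even the cyclic prime-order refinement in Corollary~\ref{primecyclic} is not obviously easier: there $Q=\frac1p\prod_{i=1}^{p-1}\det[\sigma_i(\mathscr{L}(\Gamma))]$ is, up to the factor $\frac1p$, a product of complex-conjugate pairs and hence a nonnegative real number for positive edge weights, but products of the form $f\overline{f}$ can have negative polynomial coefficients, so positivity as a polynomial still hinges on the division by $p$ together with the special structure of $\mathscr{L}(\Gamma)$.
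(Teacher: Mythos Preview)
The statement you are attempting is a \emph{conjecture} in the paper, not a theorem: the paper does not prove it in general and explicitly lists it as open. The only cases the paper settles are regular $2$-fold covers (Corollary~\ref{2coverpositivity}, via Chaiken's vector-field expansion of $\det[\mathscr{L}(\Gamma)]$, which for $G=\BZ/2\BZ$ is a manifestly nonnegative sum $\sum_{\gamma\in\mathcal{N}(\Gamma)}2^{\#C(\gamma)}\wt(\gamma)$) and, by a group-extension argument in Section~\ref{sec: future}, regular covers by $2$-groups. So there is no ``paper's own proof'' to compare against; what follows is an assessment of your sketch on its merits.

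Your proposal is honestly presented as a programme rather than a proof, and you correctly locate the obstruction: the descent map $\phi$ and the accompanying surgery bijection $\phi^{-1}(T)\to\phi^{-1}(T')$ are the whole content, and you do not construct them. Two remarks. First, the paper's Section on ``failure of the obvious approach'' exhibits a concrete $2$-fold regular cover and an arborescence $\tilde{T}$ of $\tilde{\Gamma}$ whose projection $\pi(\tilde{T})$, restricted to any single sheet, is not an arborescence of $\Gamma$; this is exactly your observation that the ``identity-sheet'' rule fails, and the paper's example shows the failure is already present in the smallest nontrivial setting, so any viable $\phi$ must mix sheets. Second, your proposed surgery (delete the $\phi$-visible lift of $T$, insert the lift of $T'$) presumes that the remaining edges of $\tilde{T}$ together with a lift of $T'$ again form an arborescence, but the paper's same counterexample shows that replacing one lifted sub-arborescence by another can create cycles upstairs; fiber-independence therefore cannot be a consequence of a local cut-and-paste and would require a global reorganisation of the ``sheet-connecting'' data as well.

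Your alternate line through Theorem~\ref{bigboi} is closer in spirit to what the paper actually achieves in the $2$-fold case, but note that the paper does not obtain positivity by expanding $\det[\mathscr{L}(\Gamma)]_{\BZ[E]}$ via a forest-type Matrix--Tree identity and then dividing by $k$; rather, for $k=2$ the reduced group algebra collapses to $\BZ$, so $\det[\mathscr{L}(\Gamma)]_{\BZ[E]}=\det[\mathscr{L}(\Gamma)]$ and Chaiken's Theorem~\ref{NVF} already gives a nonnegative expansion \emph{before} division, with every coefficient even (each term carries a factor $2^{\#C(\gamma)}\ge 2$). For $k>2$ no such collapse occurs, the individual terms in Theorem~\ref{NVF} involve factors $1-\nu(c)$ that are not real, and the paper offers no mechanism for the division by $k$---which is precisely the difficulty you isolate. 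In short: your diagnosis of where the problem lies matches the paper's, but neither you nor the paper has a proof.
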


This conjecture suggests there may be a combinatorial interpretation of $\det[\mathscr{L}(\Gamma)]_{\mathbb{Z}[E]}$.

Conjecture~\ref{posconjecture} is motivated by the positivity property of cluster algebras.  Cluster algebras, introduced by Fomin and Zelevinsky~\cite{CA1}, are rings with a distinguished set of generators called \emph{cluster variables}.  Algebraically independent subsets of cluster variables are called \emph{clusters}.  Galashin and Pylyavskyy's $R$-systems are intimately related to cluster algebras (see Section 9 of~\cite{rsystems}), so it is reasonable to expect that the ratio $\frac{A_{\tilde{v}}(\tilde{\Gamma})}{A_v(\Gamma)}$ that arises in the context of $R$-systems may also be related to cluster algebras.

It is obvious from the definition of a cluster algebra that given a cluster, we can write all cluster variables as a rational function with all positive coefficients in the variables of that cluster.  However, it is also true (and not at all obvious) that this rational function can always be simplified to a Laurent polynomial with positive coefficients.  Our situation is similar: it is obvious that both $A_{\tilde{v}}(\tilde{\Gamma})$ and $A_v(\Gamma)$ have positive coefficients.  By Theorem~\ref{bigboi}, we know the ratio $\frac{A_{\tilde{v}}(\tilde{\Gamma})}{A_v(\Gamma)}$ is a polynomial and we believe that, as in the case of cluster algebras, this polynomial has positive coefficients.

It is also useful to explore the consequences of our main result in a graph theory context, as it describes a relationship between a graph and its covers.

One might be interested in the ratio of the \emph{numbers} of arborescences of $\tilde{\Gamma}$ and of $\Gamma$. We can get this number from Theorem \ref{bigboi} by specializing the edge weights to 1:

\begin{align*}
    \frac{A_{\tilde{v}}(\tilde{\Gamma})|_{wt=1}}{A_v(\Gamma)|_{wt=1}} = \left.\frac{1}{k}{\det  [\mathscr{L}(\Gamma)}]_{\BZ[E]}\right|_{wt=1}.
\end{align*}

It is conceivable that this special case might be easier to prove than the general result, but we have not found this to be the case. It is worth noting that the positivity conjecture is trivial in this setting, since the number of arborescences must always be nonnegative.

Our main theorem is an example of a result relating an invariant of a graph to the same invariant on its cover. It would be interesting to try to describe similar results on a larger class of graph invariants.

\begin{problem} \label{invariant-problem}
    For which graph invariants $I$ does the ratio $\frac{I(\tilde{\Gamma})}{I(\Gamma)}$ have nice properties? For example, when can the ratio be expressed as an integer or a polynomial with integer coefficients? When does the ratio have positivity properties?
\end{problem}

One invariant for which our work gives a partial answer is the number of Euler circuits. An Euler circuit in a directed graph is a cycle that uses each edge of the graph precisely once. Euler showed that a graph has an Euler circuit precisely when the graph is connected and every vertex has the same number of edges in and out. In this case, the so-called ``BEST'' Theorem \cite{vanAardenneEhrenfest} gives a formula for the number of Euler circuits $E(G)$ of a graph $G$:

\[E(G) = A_v(G)|_{\wt=1} \cdot \prod_{v \text{ vertex in } G} (\deg v - 1)!,\] where $\deg v$ is the outdegree of $v$.

We can combine this formula with Theorem \ref{bigboi} to say:

\begin{corollary}
\[\frac{E(\tilde{\Gamma})}{E(\Gamma)} = \frac{1}{k}\left.\det  [\mathscr{L}(\Gamma)]_{\BZ[E]}\right|_{\wt=1} \cdot \prod_v ((\deg v - 1)!)^{k-1},\] where the product is over the vertices in $\Gamma$. This quantity is a positive integer.
\end{corollary}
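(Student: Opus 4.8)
The plan is to combine the BEST Theorem with Theorem~\ref{bigboi} directly, treating the covering graph $\tilde\Gamma$ and base graph $\Gamma$ as Eulerian digraphs and comparing their Euler-circuit counts vertex by vertex. First I would observe that if $\Gamma$ is Eulerian (connected with equal in- and out-degree at every vertex), then so is $\tilde\Gamma$: connectedness of the cover may be assumed (or handled componentwise), and the in- and out-degrees at a lift $\tilde v$ of $v$ equal those at $v$ because the covering map is a local bijection on edges, so the Euler condition lifts. Hence the BEST Theorem applies to both graphs with $\wt=1$, giving
\begin{align*}
E(\tilde\Gamma) &= A_{\tilde v}(\tilde\Gamma)\big|_{\wt=1}\cdot \prod_{\tilde v \in \tilde V}(\deg \tilde v - 1)!,\\
E(\Gamma) &= A_{v}(\Gamma)\big|_{\wt=1}\cdot \prod_{v \in V}(\deg v - 1)!.
\end{align*}

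Next I would take the ratio of these two expressions. For the arborescence factor, Theorem~\ref{bigboi} (specialized to $\wt=1$, as in the displayed equation just before the corollary) gives $A_{\tilde v}(\tilde\Gamma)|_{\wt=1} / A_v(\Gamma)|_{\wt=1} = \tfrac1k \det[\mathscr L(\Gamma)]_{\BZ[E]}\big|_{\wt=1}$. For the factorial factor, each vertex $v$ of $\Gamma$ has exactly $k$ lifts in $\tilde\Gamma$, and each lift has the same outdegree $\deg v$; therefore $\prod_{\tilde v \in \tilde V}(\deg\tilde v - 1)! = \prod_{v\in V}\big((\deg v - 1)!\big)^{k}$, and dividing by $\prod_{v\in V}(\deg v-1)!$ leaves $\prod_{v\in V}\big((\deg v-1)!\big)^{k-1}$. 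Multiplying the two quotients yields the claimed formula.

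Finally, for the assertion that the quantity is a positive integer, I would argue that it is a ratio of two positive integers $E(\tilde\Gamma)$ and $E(\Gamma)$ (positive because each graph is Eulerian, hence has at least one Euler circuit), so it is at worst a positive rational; integrality then follows because $\frac1k\det[\mathscr L(\Gamma)]_{\BZ[E]}|_{\wt=1}$ is itself an integer — this is the $\wt=1$ specialization of the integrality established in Corollary~\ref{integrality} (or directly, $A_v(\Gamma)$ divides $A_{\tilde v}(\tilde\Gamma)$) — and the factorial product is a positive integer.

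I do not anticipate a serious obstacle here; the only point requiring a little care is the bookkeeping that justifies $\prod_{\tilde v}(\deg\tilde v-1)! = \prod_v((\deg v-1)!)^k$, i.e.\ that the fiber over each vertex has size exactly $k$ and that outdegree is preserved under the covering map, and the implicit hypothesis that both graphs are connected so that the BEST Theorem applies (if $\tilde\Gamma$ is disconnected the statement should be read componentwise, or one restricts to the case of connected covers, which is the setting of interest).
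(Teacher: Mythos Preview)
Your proposal is correct and follows exactly the approach the paper indicates: the paper does not write out a proof but simply says ``We can combine this formula with Theorem~\ref{bigboi},'' and your argument is precisely that combination, with the bookkeeping on degrees and fibers spelled out. The only caveat you already flag---that connectedness of $\tilde\Gamma$ is needed for the ``positive'' part of ``positive integer'' (otherwise both sides are zero)---is an implicit hypothesis in the paper as well.
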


A recent example of this approach was taken by Verma \cite{verma}, who explored the ratio of Tutte polynomials between a graph and its cover. For certain very special graphs and a particular specialization of the Tutte polynomial, the ratio in Problem \ref{invariant-problem} is nice, although in general, the ratio of Tutte polynomials does behave nearly as nicely as the ratio of arborescence sums.

The rest of the paper will proceed as follows.  Section \ref{backgroundsection} covers the background and conventions necessary to read this paper. In this section, we also discuss the Laplacian matrix and the Matrix Tree Theorem in greater detail, and give additional topological background on covering graphs. In particular, we introduce the \emph{voltage graph}, a construction that allows us to compactly describe arbitrary regular covering graphs $\tilde{\Gamma}$ by assigning a group-valued voltage to each edge of $\Gamma$. In Section~\ref{sec: main-thm}, we prove the main theorem.  We also describe restriction of scalars and prove Corollary \ref{primecyclic}.  Section \ref{sec: vec-fields} reviews some known results relating \emph{vector fields} on voltage graphs to the voltage Laplacian. Vector fields are closely related to arborescences, and this discussion especially helps to frame the results of the case of $2$-fold covers. We conclude with several open questions in Section~\ref{sec: future}.

\section{Background and Definitions} \label{backgroundsection}

\subsection{Arborescences}
Let $\Gamma = (V, E, \wt)$ be an edge-weighted directed multigraph with a weight function on the edges $\wt: E \to R$, for some ring $R$. We will usually abbreviate ``edge-weighted" to ``weighted" and ``directed multigraph" to ``graph." We will consider the weights of the edges of $G$ to be indeterminates, treating the weight $\wt(e)$ of an edge $e$ as a variable.  Let the set of such variables be denoted $\wt(E)$.  We denote the source vertex of an edge $e$ by $e_s$ and target vertex of $e$ by $e_t$. If an edge has source $v$ and target $w$, we may write $e = (v, w)$. However, note that when $\Gamma$ is not necessarily simple, there may be more than one edge satisfying these properties, so $(v,w)$ may specify multiple edges.  We denote the set of outgoing edges of a vertex $v$ by $E_s(v)$, and the set of incoming edges of $v$ by $E_t(v)$.

\begin{definition}An \emph{arborescence} $T$ of $\Gamma$ rooted at $v \in V$ is a spanning tree directed towards $v$. That is, for all vertices $w$, there exists a unique directed path from $w$ to $v$ in $T$. \footnote{In the literature, an arborescence rooted at $v$ is usually defined to be a spanning tree directed \emph{away} from $v$, so that $v$ is the unique source rather than the unique sink; see, for example, \cite{korte}, \cite{chaiken}, and \cite{gordon}. Our convention is consistent with the study of $R$-systems.} We denote the set of arborescences of $\Gamma$ rooted at vertex $v$ by $\mathcal{T}_v(\Gamma)$. The \emph{weight of an arborescence} $\wt(T)$ is the product of the weights of its edges:
\begin{align*}
    \wt(T) = \prod_{e \in T} \wt(e)
\end{align*}

We denote by $A_v(\Gamma)$ the sum of the weights of all arborescences of $\Gamma$ rooted at $v$:
\begin{align*}
    A_v(\Gamma) = \sum_{T \in \mathcal{T}_v(\Gamma)} \wt(T)
\end{align*}
\end{definition}
$A_v(\Gamma)$ is either zero or a homogeneous polynomial of degree $|V| - 1$ in the edge weights of $G$. 

\begin{example}\label{ex:arb}
Consider the following edge-weighted directed graph $\Gamma$.
\begin{center}
\begin{tikzpicture}
   [scale=0.65,line width=1.2]
    \coordinate (1) at (0, 3);
    \coordinate (2) at (3/1.71, 0);
    \coordinate (3) at (-3/1.71, 0);
    
    \coordinate (4) at (0, 3 + 0.4);
    \draw (1) arc(270:360+270:0.4);

    \path [draw = red, postaction = {on each segment = {mid arrow = red}}]
    (1)--(2);
     \path [draw = blue, postaction = {on each segment = {mid arrow = red}}]
     (3)--(1);
     \path [draw = green, postaction = {on each segment = {mid arrow = red}}]
    (2) to [bend left] (3);
    
     \path [draw = purple, postaction = {on each segment = {mid arrow = red}}]
    (3) to [bend left] (2);
    
    \draw[fill] (1) circle [radius=0.1];
    \node at (0.5, 3) {$1$};
    \draw[fill] (2) circle [radius=0.1];
    \node at (3/1.71 + 0.5, 0) {2};
    \draw[fill] (3) circle [radius=0.1];
    \node at (-3/1.71-0.5, 0) {3};

    \node at (0.8, 3+ 0.6) {$a$};
    \node at (3/1.71/2 + 0.4, 3/2) {$b$};
    \node at (-3/1.71/2 - 0.5, 3/2) {$d$};
    \node at (0, 0.85 + 0.05) {$e$};
    \node at (0, -0.85 - 0.05) {$c$};
    \end{tikzpicture}
\end{center}
The arborescences rooted at 2 are:
\begin{center}
\begin{tikzpicture}
   [scale=0.65,line width=1.2]
    \coordinate (1) at (0, 3);
    \coordinate (2) at (3/1.71, 0);
    \coordinate (3) at (-3/1.71, 0);
    
    \coordinate (4) at (0, 3 + 0.4);

    \path [draw = red, postaction = {on each segment = {mid arrow = red}}]
    (1)--(2);
    \path [draw = blue, postaction = {on each segment = {mid arrow = red}}]
    (3)--(1);
    
    \draw[fill] (1) circle [radius=0.1];
    \node at (0.5, 3) {$1$};
    \draw[fill] (2) circle [radius=0.1];
    \node at (3/1.71 + 0.5, 0) {2};
    \draw[fill] (3) circle [radius=0.1];
    \node at (-3/1.71-0.5, 0) {3};

    \node at (3/1.71/2 + 0.4, 3/2) {$b$};
    \node at (-3/1.71/2 - 0.5, 3/2) {$d$};
    \end{tikzpicture}
    \hspace{0.2in}
    \begin{tikzpicture}
   [scale=0.65,line width=1.2]
    \coordinate (1) at (0, 3);
    \coordinate (2) at (3/1.71, 0);
    \coordinate (3) at (-3/1.71, 0);
    
    \coordinate (4) at (0, 3 + 0.4);

    \path [draw = red, postaction = {on each segment = {mid arrow = red}}]
    (1)--(2);
    
    \path [draw = purple, postaction = {on each segment = {mid arrow = red}}]
    (3) to [bend left] (2);
    
    \draw[fill] (1) circle [radius=0.1];
    \node at (0.5, 3) {$1$};
    \draw[fill] (2) circle [radius=0.1];
    \node at (3/1.71 + 0.5, 0) {2};
    \draw[fill] (3) circle [radius=0.1];
    \node at (-3/1.71-0.5, 0) {3};

    \node at (3/1.71/2 + 0.4, 3/2) {$b$};
    \node at (0, 0.85 + 0.05) {$e$};
    \end{tikzpicture}
\end{center}
The weight of the arborescence on the left is $bd$ and the weight of the arborescence on the right is $be$.  So, $A_2(\Gamma)=bd+be$.
\end{example}

\subsection{The Laplacian matrix and the Matrix Tree Theorem}

The Matrix Tree Theorem, also known as Kirchoff's Theorem, yields a way of computing $A_v(\Gamma)$ through the \emph{Laplacian matrix} of $\Gamma$. 

\begin{definition} \label{Laplacian}

Label the vertices of $\Gamma$ as $v_1, v_2, \dots$. The \emph{Laplacian matrix} $L(\Gamma)$ of a graph $\Gamma$ is the difference of the weighted degree matrix $D$ and the weighted adjacency matrix $A$ of $\Gamma$:
\begin{align*}
    L(\Gamma) = D(\Gamma) - A(\Gamma).
\end{align*}
Here, the weighted degree matrix is the diagonal matrix whose $i$-th entry is 
\begin{align*}
    d_{ii} = \sum_{e \in E_{s}(v_i)} \wt(e)
\end{align*} and the weighted adjacency matrix has entries defined by 
\begin{align*}
    a_{ij} = \sum_{e = (v_i, v_j)} \wt(e).
\end{align*}
\end{definition}

Since we will always be working with weighted graphs in this paper, we will usually drop the word ``weighted" when talking about the Laplacian matrix. Note also the ordering of the rows and columns of the Laplacian. We will always assume that $v_1$ corresponds to the first row and column of $L(\Gamma)$, that $v_2$ corresponds to the second row and column of $L(\Gamma)$, and so on.

\begin{theorem}\label{MTT2}
\emph{(Matrix Tree Theorem)} \cite{chaiken} The sum of the weights of arborescences rooted at $v_i$ is equal to the minor of $L(\Gamma)$ obtained by removing the $i$-th row and column:
\begin{align*}
    A_{v_i}(\Gamma) = \det [L_{i}^i(\Gamma)].
\end{align*}
\end{theorem}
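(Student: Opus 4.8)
The plan is to prove Theorem~\ref{MTT2} by the classical factorization of the Laplacian followed by the Cauchy--Binet formula, with the combinatorial content concentrated in a sign computation. First I would introduce two $|V| \times |E|$ matrices: the signed incidence matrix $N$, with $N_{u,e} = 1$ if $u = e_s$, $N_{u,e} = -1$ if $u = e_t$ and $u \ne e_s$, and $N_{u,e} = 0$ otherwise; and the weighted source matrix $N'$, with $N'_{u,e} = \wt(e)$ if $u = e_s$ and $0$ otherwise. A direct check on entries shows $L(\Gamma) = N' N^{\mathsf{T}}$: the diagonal entry at $(u,u)$ recovers $d_{uu} - (\text{loop weights at }u)$, which equals $L_{uu} = d_{uu} - a_{uu}$, and the off-diagonal entry at $(u,w)$ recovers $-a_{uw}$.

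Deleting the row indexed by $v_i$ from both $N$ and $N'$ gives $(|V|-1) \times |E|$ matrices $\widetilde N$ and $\widetilde N'$ with $L_i^i(\Gamma) = \widetilde N' \widetilde N^{\mathsf{T}}$, so by Cauchy--Binet
\[
  \det L_i^i(\Gamma) = \sum_{S} \det(\widetilde N'_S)\,\det(\widetilde N_S),
\]
where the sum ranges over $(|V|-1)$-element subsets $S \subseteq E$ and $M_S$ denotes the square submatrix of $M$ on the columns in $S$. I would then identify the surviving terms. Since $\widetilde N$ is a reduced incidence matrix of $\Gamma$ regarded as an undirected graph, $\det(\widetilde N_S) \in \{0, \pm 1\}$, and it is nonzero exactly when $S$ is a spanning tree. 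Since each column of $\widetilde N'$ has at most one nonzero entry (namely $\wt(e)$ in the row of $e_s$, the whole column being zero if $e_s = v_i$), the determinant $\det(\widetilde N'_S)$ equals $\pm\prod_{e\in S}\wt(e)$ when the source map $e \mapsto e_s$ restricts to a bijection $S \to V \setminus \{v_i\}$, and is $0$ otherwise. A subset $S$ satisfying both conditions is precisely a spanning tree in which every non-root vertex has out-degree one and $v_i$ has out-degree zero, i.e.\ an arborescence rooted at $v_i$; for such $S$ the representing permutation, hence the sign of $\det(\widetilde N'_S)$, is uniquely determined.

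It remains to verify that the two signs always conspire so that each arborescence $T \in \mathcal{T}_{v_i}(\Gamma)$ contributes exactly $+\wt(T)$, that is, $\det(\widetilde N'_T)\,\det(\widetilde N_T) = \prod_{e\in T}\wt(e)$; I expect this sign-matching to be the main obstacle. I would prove it by induction on $|V|$: a tree on at least two vertices has a leaf $\ell \ne v_i$, and its unique incident edge in $T$ is its outgoing edge $e$, so the row of $\ell$ has a single nonzero entry in both $\widetilde N'_T$ (namely $\wt(e)$) and $\widetilde N_T$ (namely $\pm1$). Expanding both determinants along this row and comparing the resulting cofactor signs reduces the identity for $(\Gamma, T)$ to the identity for the contracted pair $(\Gamma/e, T/e)$, where $T/e$ is an arborescence rooted at $v_i$ on one fewer vertex; the case $|V|=1$ is immediate. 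Summing over all arborescences then gives $\det L_i^i(\Gamma) = \sum_{T \in \mathcal{T}_{v_i}(\Gamma)} \wt(T) = A_{v_i}(\Gamma)$.

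As an alternative more in keeping with the deletion--contraction method used later in the paper, one could instead fix a non-root vertex $u$ and use multilinearity of the determinant in the row indexed by $u$ to write $\det L_i^i(\Gamma) = \sum_{e \in E_s(u)} \wt(e)\,\det L_i^i(\Gamma/e)$, matching the bijective recursion $A_{v_i}(\Gamma) = \sum_{e \in E_s(u)} \wt(e)\,A_{v_i}(\Gamma/e)$ obtained by recording which outgoing edge of $u$ an arborescence uses; there the obstacle shifts to checking that the determinant of the reduced Laplacian with row $u$ replaced by $e_u - e_w$ agrees, after elementary row and column operations, with $\det L_i^i(\Gamma/e)$.
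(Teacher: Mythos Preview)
The paper does not prove Theorem~\ref{MTT2}; it is stated as background with a citation to Chaiken, so there is no in-paper argument to compare yours against.

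Your Cauchy--Binet proof is the classical one and is essentially correct. One small slip: with your definitions, a loop $e$ at $u$ has $N_{u,e}=1$ (the clause $u=e_s$ fires) and $N'_{u,e}=\wt(e)$, so the $(u,u)$ entry of $N'N^{\mathsf T}$ is $\sum_{e\in E_s(u)}\wt(e)=d_{uu}$, not $d_{uu}-a_{uu}$ as you assert. This is harmless, since loops contribute nothing to either side of the identity; simply delete them at the outset (then $a_{uu}=0$) and your factorization is exact. The leaf-induction for the sign is fine: the row of a non-root leaf $\ell$ has its unique nonzero entry in the \emph{same} column $e$ in both $\widetilde N'_T$ and $\widetilde N_T$, so the cofactor signs $(-1)^{r+c}$ from the two Laplace expansions cancel and the product reduces to $\wt(e)$ times the product for $T\setminus e$ on $V\setminus\{\ell\}$.

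Your alternative deletion--contraction sketch is indeed closer in spirit to how the paper argues in Section~\ref{sec: vec-fields} (and to Chaiken's own methods); the row-replacement check you flag as the obstacle there is a routine column-addition: adding the column of $e_t$ to the column of $u$ in the matrix with row $u$ equal to $e_u-e_{e_t}$ produces exactly the reduced Laplacian of $\Gamma/_{1}e$ in the sense of the paper's first proof of Theorem~\ref{NVF}.
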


\begin{example} \label{ex:laplacian}
For the graph $\Gamma$ from Example~\ref{ex:arb}, we have $$D(\Gamma)=\begin{bmatrix}
a+b & 0 & 0 \\
0 & c & 0 \\
0 & 0 & d+e
\end{bmatrix},
\qquad
A(\Gamma)=\begin{bmatrix}
a & b & 0 \\
0 & 0 & c \\
d & e & 0
\end{bmatrix}, \qquad
L(\Gamma)=\begin{bmatrix}
b & -b & 0 \\
0 & c & -c \\
-d & -e & d+e
\end{bmatrix}.$$
Removing row and column 2 and taking the determinant, we get $$\left|\begin{matrix}
b & 0 \\
-d & d+e
\end{matrix}\right|=bd+be.$$  As we computed in Example~\ref{ex:arb}, this is $A_2(\Gamma)$.
\end{example}

\subsection{Covering graphs, voltage graphs, and derived graphs}

\begin{definition} A \emph{k-fold cover} of $\Gamma = (V, E)$ is a graph $\tilde{\Gamma} = (\tilde{V}, \tilde{E})$ that is a $k$-fold covering space of $G$ in the topological sense that preserves edge weight. That is, we require that a lifted edge in the covering graph has the same weight as its corresponding base edge in the base graph. In order to use this definition, we need to find a way to formally topologize directed graphs in a way that encodes edge orientation. To avoid this, we instead give a more concrete alternative definition of a covering graph. The graph $\tilde{\Gamma} = (\tilde{V}, \tilde{E})$ is a $k$-fold covering graph of $\Gamma = (V, E)$ if there exists a projection map $\pi: \tilde{\Gamma} \rightarrow \Gamma$ such that
\begin{enumerate}[nosep]
    \item $\pi$ maps vertices to vertices and edges to edges;
    \item $|\pi^{-1}(v)| = |\pi^{-1}(e)| = k$ for all $v \in V, e \in E$;
    \item For all $\tilde{e} \in \tilde{E}$, we have $\wt(\tilde{e}) = \wt(\pi(\tilde{e}))$;
    \item $\pi$ is a local homeomorphism.  Equivalently, $|E_s(\tilde{v})| = |E_s(\pi(\tilde{v}))|$ and $|E_t(\tilde{v})| = |E_t(\pi(\tilde{v}))|$ for all $\tilde{v} \in \tilde{V}$.
\end{enumerate}
When we refer to $\tilde{\Gamma}$ as a covering graph of $\Gamma$, we assume a single distinguished projection $\pi: \tilde{\Gamma} \rightarrow \Gamma$ has been fixed. 
\end{definition}

 We do not require a covering graph to be connected. However, disconnected graphs contain no arborescences, so our main quantity of interest $A_v(\tilde{\Gamma})$ is always $0$ in the disconnected case. 

\begin{definition} A \emph{weighted permutation-voltage graph} $\Gamma = (V, E, \wt, \nu)$ is a weighted directed multigraph with each edge $e$ also labeled by a permutation $\nu(e) = \sigma_e\in S_k$, the symmetric group on $k$ letters. This labeling is called the \emph{voltage} of the edge $e$. Note that the voltage of an edge $e$ is entirely distinct from the weight of $e$.
\end{definition}

\begin{definition} Given a permutation-voltage graph $\Gamma$, we may construct an $k$-fold covering graph $\tilde{\Gamma} = (\tilde{V}, \tilde{E}, \wt)$ of $\Gamma$. $\tilde{\Gamma}$ is a graph with vertex set $\tilde{V} = V \times \{1,2,\ldots,k\}$ and edge set
\begin{align*}
    \tilde{E}:=\left\{\left[v \times x, w \times \sigma_e(x)\right] : x \in \{1,\ldots,k\}, e=(v,w)\in\Gamma\right\}.
\end{align*}
\end{definition}

Every covering graph of $\Gamma$ can be constructed in this way.

\begin{example}\label{ex:perm}
Let $\Gamma$ be the permutation-voltage graph shown in Figure~\ref{fig:perm-voltage-graph}, where edges labeled $(x, y)$ have edge weight $x$ and voltage $y$.  Then we can construct a $k$-fold cover $\tilde{\Gamma}$, with vertices $(v, y) = v^y$ and with edges labeled by weight, is shown in Figure~\ref{fig:perm-derived-graph}.
\begin{figure}[htp]\label{fig:derived-graph}
    \centering
    \begin{tikzpicture}
   [scale=0.65,line width=1.2]
    \coordinate (1) at (0, 3);
    \coordinate (2) at (3/1.71, 0);
    \coordinate (3) at (-3/1.71, 0);
    
    \coordinate (4) at (0, 3 + 0.4);
    \draw (1) arc(270:360+270:0.4);

    \path [draw = red, postaction = {on each segment = {mid arrow = red}}]
    (1)--(2);
     \path [draw = blue, postaction = {on each segment = {mid arrow = red}}]
     (3)--(1);
     \path [draw = green, postaction = {on each segment = {mid arrow = red}}]
    (2) to [bend left] (3);
    
     \path [draw = purple, postaction = {on each segment = {mid arrow = red}}]
    (3) to [bend left] (2);
    
    \draw[fill] (1) circle [radius=0.1];
    \node at (0.5, 3) {$1$};
    \draw[fill] (2) circle [radius=0.1];
    \node at (3/1.71 + 0.5, 0) {2};
    \draw[fill] (3) circle [radius=0.1];
    \node at (-3/1.71-0.5, 0) {3};

    \node at (1 + 0.4, 3+ 0.5) {$(a, 321)$};
    \node at (3/1.71/2 + 1.1, 3/2) {$(b, 231)$};
    \node at (-3/1.71/2 - 1.2, 3/2) {$(d, 123)$};
    \node at (0, 0.85 + 0.1) {$(e, 132)$};
    \node at (0, -0.85 - 0.1) {$(c, 123)$};
    \end{tikzpicture}
    \caption{A permutation-voltage graph $\Gamma$.}
    \label{fig:perm-voltage-graph}
\end{figure}
\begin{figure}[htp]
    \centering
  \begin{tikzpicture}[scale = 0.7, line width=1.2]
    \coordinate (1) at (0, 3+4);
    
    \coordinate (2) at (3/1.71, 4);
    \coordinate (3) at (-3/1.71, 4);
    
    \coordinate (4) at (-4, 3);
    
    \coordinate (5) at (3/1.71 - 4, 0);
    \coordinate (6) at (-3/1.71 - 4, 0);
    
    \coordinate (7) at (4, 3);
    
    \coordinate (8) at (3/1.71 + 4, 0);
    \coordinate (9) at (-3/1.71 + 4, 0);
    
\draw[fill] (1) circle [radius=0.1] ;
    \draw[fill] (2) circle [radius=0.1];
    \draw[fill] (3) circle [radius=0.1];
    \draw[fill] (4) circle [radius=0.1];
    \draw[fill] (5) circle [radius=0.1];
    \draw[fill] (6) circle [radius=0.1];
    \draw[fill] (7) circle [radius=0.1];
    \draw[fill] (8) circle [radius=0.1];
    \draw[fill] (9) circle [radius=0.1];
    
    \node at (0, 7.5) {$1^1$};
    
    \node at (3/1.71, 3.6) {$2^1$};
    \node at (-3/1.71 -0.4, 4) {$3^1$};
    
    \node at (-4 + 0.5, 3+0.3) {$1^2$};
    
    \node at (3/1.71 - 4, -0.5) {$2^2$};
    \node at (-3/1.71 - 4 -0.5, 0) {$3^2$};
    
    \node at (4 + 0.5, 3) {$1^3$};
    
    \node at (3/1.71 + 4 + 0.5, 0) {$2^3$};
    \node at (-3/1.71 + 4, -0.5) {$3^3$};
    
    \path[draw = black, postaction = {on each segment = {mid arrow = red}}]
    
    (1) to (7)
    (7) to [bend right] (1);
    
    \draw (4) arc(-45:360-45:0.4);
    
    \path[draw = green, postaction = {on each segment = {mid arrow = red}}]
    (5) -- (6)
    (8) to (9)
    (2) to [bend left] (3);
    
    \path[draw = red, postaction = {on each segment = {mid arrow = red}}]
    (1)  -- (5)
    (4) -- (8)
    (7) -- (2);
    
    \path[draw = blue, postaction = {on each segment = {mid arrow = red}}]
    (3) -- (1)
    (6) -- (4)
    (9) -- (7);
    
    \path[draw = purple, postaction = {on each segment = {mid arrow = red}}]
    (3) to [bend left] (2)
    (6) to [bend right] (8)
    (9) -- (5);
    
    \node at (1.5, 5.1) {$a$};
    \node at (2.8, 5.8) {$a$};
    \node at (-4.2, 4) {$a$};
    
    \node at (0.6, 1.9) {$b$};
    \node at (-1, 2.9) {$b$};
    \node at (2.7, 3.2) {$b$};
    
    \node at (0, 4-0.3) {$c$};
    \node at (-4, 0.3) {$c$};\node at (4, 0.3) {$c$};
    
    \node at (-3/1.71/2 - 0.5, 3/2+4) {$d$};
    \node at (-3/1.71/2 - 0.5-4, 3/2) {$d$};
    \node at (-3/1.71/2 + 0.4+4, 3/2) {$d$};
    
    \node at (0, 4+0.3) {$e$};
    \node at (0, -1.3) {$e$};
    \node at (0, -0.3) {$e$};
    
    \draw[fill] (1) circle [radius=0.1] ;
    \draw[fill] (2) circle [radius=0.1];
    \draw[fill] (3) circle [radius=0.1];
    \draw[fill] (4) circle [radius=0.1];
    \draw[fill] (5) circle [radius=0.1];
    \draw[fill] (6) circle [radius=0.1];
    \draw[fill] (7) circle [radius=0.1];
    \draw[fill] (8) circle [radius=0.1];
    \draw[fill] (9) circle [radius=0.1];
    \end{tikzpicture}
    \caption{The derived covering graph $\tilde{\Gamma}$ of $\Gamma$ in Figure~\ref{fig:voltage-graph}. Edge colors denote correspondence to the edges of $\Gamma$ via the quotient map.}
\label{fig:perm-derived-graph}
\end{figure}
\end{example}

There is a special case of the above definitions that will be particularly useful for us. Let $G$ be a finite group of size $k$. Instead of assigning each lift of an edge an integer in $\{1,\ldots,k\}$, we assign it an element $g$ of $G$. Each edge $e$ of the base graph is assigned a group element $g_e$, and the permutation-voltage $\sigma_e$ is obtained by the action of the group: $\sigma_e(x) := \nu(e)\cdot x$.  We will abuse notation and write $\nu(e) = g_e$. In this context, we will call $\Gamma$ a $G$-voltage graph.

Given a $G$-voltage graph $\Gamma$, the associated $|G|$-fold covering graph $\tilde{\Gamma}$ of $\Gamma$ is known as the \emph{derived graph}. The vertex set of $\tilde{\Gamma}$ is $\tilde{V} = V \times G$ and edge set
\begin{align*}
    \tilde{E}:=\left\{\left[v \times x, w \times (gx)\right] : x \in G, e=(v,w)\in\Gamma, \nu(e)=g\in G\right\}.
\end{align*}

\begin{example}\label{ex1}
Let $G = \BZ/3\BZ = \{1, g, g^2\}$, and let $\Gamma$ be the $G$-voltage graph shown in Figure~\ref{fig:voltage-graph}, where edges labeled $(x, y)$ have edge weight $x$ and voltage $y$.  Then the derived graph $\tilde{\Gamma}$, with vertices $(v, y) = v^y$ and with edges labeled by weight, is shown in Figure~\ref{fig:derived-graph}.
\begin{figure}[htp]\label{fig:derived-graph}
    \centering
    \begin{tikzpicture}
   [scale=0.65,line width=1.2]
    \coordinate (1) at (0, 3);
    \coordinate (2) at (3/1.71, 0);
    \coordinate (3) at (-3/1.71, 0);
    
    \coordinate (4) at (0, 3 + 0.4);
    \draw (1) arc(270:360+270:0.4);

    \path [draw = red, postaction = {on each segment = {mid arrow = red}}]
    (1)--(2);
     \path [draw = blue, postaction = {on each segment = {mid arrow = red}}]
     (3)--(1);
     \path [draw = green, postaction = {on each segment = {mid arrow = red}}]
    (2) to [bend left] (3);
    
     \path [draw = purple, postaction = {on each segment = {mid arrow = red}}]
    (3) to [bend left] (2);
    
    \draw[fill] (1) circle [radius=0.1];
    \node at (0.5, 3) {$1$};
    \draw[fill] (2) circle [radius=0.1];
    \node at (3/1.71 + 0.5, 0) {2};
    \draw[fill] (3) circle [radius=0.1];
    \node at (-3/1.71-0.5, 0) {3};

    \node at (1 + 0.2, 3+ 0.5) {$(a, g)$};
    \node at (3/1.71/2 + 0.8, 3/2) {$(b, 1)$};
    \node at (-3/1.71/2 - 1, 3/2) {$(d, g^2)$};
    \node at (0, 0.85 + 0.1) {$(e, 1)$};
    \node at (0, -0.85 - 0.1) {$(c, g^2)$};
    \end{tikzpicture}
    \caption{A $\BZ/3\BZ$-voltage graph $\Gamma$.}
    \label{fig:voltage-graph}
\end{figure}
\begin{figure}[htp]
    \centering
  \begin{tikzpicture}[scale = 0.7, line width=1.2]
    \coordinate (1) at (0, 3+4);
    
    \coordinate (2) at (3/1.71, 4);
    \coordinate (3) at (-3/1.71, 4);
    
    \coordinate (4) at (-4, 3);
    
    \coordinate (5) at (3/1.71 - 4, 0);
    \coordinate (6) at (-3/1.71 - 4, 0);
    
    \coordinate (7) at (4, 3);
    
    \coordinate (8) at (3/1.71 + 4, 0);
    \coordinate (9) at (-3/1.71 + 4, 0);
    
\draw[fill] (1) circle [radius=0.1] ;
    \draw[fill] (2) circle [radius=0.1];
    \draw[fill] (3) circle [radius=0.1];
    \draw[fill] (4) circle [radius=0.1];
    \draw[fill] (5) circle [radius=0.1];
    \draw[fill] (6) circle [radius=0.1];
    \draw[fill] (7) circle [radius=0.1];
    \draw[fill] (8) circle [radius=0.1];
    \draw[fill] (9) circle [radius=0.1];
    
    \node at (0 + 0.5, 3+4) {$1^1$};
    
    \node at (3/1.71 + 0.5, 4) {$2^1$};
    \node at (-3/1.71 -0.3, 4) {$3^1$};
    
    \node at (-4 + 0.5, 3+0.3) {$1^g$};
    
    \node at (3/1.71 - 4 + 0.5, 0) {$2^g$};
    \node at (-3/1.71 - 4 -0.5, 0) {$3^g$};
    
    \node at (4 + 0.5, 3) {$1^{g^2}$};
    
    \node at (3/1.71 + 4 + 0.5, 0) {$2^{g^2}$};
    \node at (-3/1.71 + 4 - 0.5, 0) {$3^{g^2}$};
    
    \path[draw = black, postaction = {on each segment = {mid arrow = red}}]
    
     (1) to [bend right] (4)
    (4) -- (7)
    (7) -- (1);
    
    \path[draw = red, postaction = {on each segment = {mid arrow = red}}]
    (1)  -- (2)
    (4) -- (5)
    (7) -- (8);
    
    \path[draw = green, postaction = {on each segment = {mid arrow = red}}]
    (5) -- (3)
    (8) to [bend left] (6)
    (2) -- (9);
    
    \path[draw = blue, postaction = {on each segment = {mid arrow = red}}]
    (3) -- (7)
    (6) -- (1)
    (9) -- (4);
    
    \path[draw = purple, postaction = {on each segment = {mid arrow = red}}]
    (3) -- (2)
    (6) -- (5)
    (9) -- (8);
    
    \node at (-3, 5.8) {$a$};
    \node at (0, 2.6) {$a$};
    \node at (2.6, 5) {$a$};
    
    \node at (0.5, 5.6) {$b$};
    \node at (5.2, 1.7) {$b$};
    \node at (-3.5, 1.6) {$b$};
    
    \node at (1.7, 2) {$c$};
    \node at (0, -1.3) {$c$};
    \node at (-2.3, 1.7) {$c$};
    
    \node at (-3.1, 3.7) {$d$};
    \node at (-.5, 1) {$d$};
    \node at (1.3, 3.7) {$d$};
    
    \node at (0, 4+0.3) {$e$};
    \node at (4, .3) {$e$};
    \node at (-4, .3) {$e$};
    
    \draw[fill] (1) circle [radius=0.1] ;
    \draw[fill] (2) circle [radius=0.1];
    \draw[fill] (3) circle [radius=0.1];
    \draw[fill] (4) circle [radius=0.1];
    \draw[fill] (5) circle [radius=0.1];
    \draw[fill] (6) circle [radius=0.1];
    \draw[fill] (7) circle [radius=0.1];
    \draw[fill] (8) circle [radius=0.1];
    \draw[fill] (9) circle [radius=0.1];
    \end{tikzpicture}
    \caption{The derived covering graph $\tilde{\Gamma}$ of $\Gamma$ in Figure~\ref{fig:voltage-graph}. Edge colors denote correspondence to the edges of $\Gamma$ via the quotient map.}
\label{fig:derived-graph}
\end{figure}
\end{example}

While derived graphs in one sense are a very special subclass of covering graphs, they actually account for all \emph{regular covering graphs}.

\begin{definition}
    Given a graph $\Gamma$ and a covering graph $\tilde{\Gamma}$, the \emph{deck group} $\Aut(\pi)$ of $\tilde{\Gamma}$ is the subgroup of graph automorphisms on $\tilde{\Gamma}$ that commute with $\pi$.
\end{definition}

\begin{definition}
 A \emph{regular cover} $\tilde{\Gamma}$, sometimes known as a \emph{Galois cover}, of a graph $\Gamma$ is a covering graph whose deck group is transitive on each fiber $\pi^{-1}(v)$ for each $v \in V$.  
\end{definition}

\begin{example}
Note that the vertex $1^1$ in Example~\ref{ex:perm} is part of a 2-cycle but vertex $1^2$ is not.  Thus, there is no automorphism that maps $1^1$ to $1^2$.  This means that $\tilde{\Gamma}$ is not a regular cover.

On the other hand, the derived graph in Example~\ref{ex1} is a regular cover because the cyclic permutation $\sigma$ that sends each $v_{i, x}$ to $v_{i, gx}$ is in $\Aut(\pi)$, which shows that $\Aut(\pi)$ is transitive on each fiber $\pi^{-1}(v)$.
\end{example}

\begin{theorem}\emph{(Theorems 3 and 4 in \cite{topograph})}\label{regular}
Every regular cover $\tilde{\Gamma}$ of a graph $\Gamma$ may be realized as a derived cover of $\Gamma$ with voltage assignments in $\Aut(\pi)$. Conversely, every derived graph is a regular cover.
\end{theorem}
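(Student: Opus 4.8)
The plan is to prove the two implications separately, beginning with the easier converse. Given a $G$-voltage graph $\Gamma$ with derived graph $\tilde{\Gamma}$ and projection $\pi$ (we already know from the construction above that $\pi$ is a covering map), I would exhibit a copy of $G$ sitting inside $\Aut(\pi)$ and acting transitively on fibers. For each $h \in G$, define $L_h : \tilde{\Gamma} \to \tilde{\Gamma}$ by $L_h(v \times x) = v \times hx$ on vertices and by sending the edge $[v \times x,\ w \times gx]$ (where $e = (v,w)$, $\nu(e)=g$) to $[v \times hx,\ w \times (gh)x]$. Since $(gh)x = g(hx)$, the image is again a legal edge of $\tilde{\Gamma}$, so $L_h$ is a graph automorphism, and it visibly commutes with $\pi$ (which forgets the second coordinate); hence $L_h \in \Aut(\pi)$. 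The subgroup $\{L_h : h \in G\}$ is already transitive on every fiber $\pi^{-1}(v) = \{v\} \times G$, since $\{hx : h \in G\} = G$ for any fixed $x$. Therefore $\tilde{\Gamma}$ is a regular cover.

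For the forward direction, let $\pi : \tilde{\Gamma} \to \Gamma$ be a regular cover. The first step is to produce a subgroup $\hat{G} \le \Aut(\pi)$ of order $k$ acting on each fiber $\pi^{-1}(v)$ both transitively and freely; when $\tilde{\Gamma}$ is connected one simply takes $\hat{G} = \Aut(\pi)$, with freeness following from the fact that a deck transformation of a connected cover is determined by the image of a single vertex (uniqueness of walk lifting), so no nontrivial deck transformation fixes a lift. Each fiber then becomes a $\hat{G}$-torsor of size $k = |\hat{G}|$. The second step rigidifies these torsors: pick a base lift $v^{\bullet} \in \pi^{-1}(v)$ for every $v$ and form the induced equivariant bijections $\phi_v : \hat{G} \to \pi^{-1}(v)$ (the precise formula, $g \mapsto g\cdot v^{\bullet}$ or $g \mapsto g^{-1}\cdot v^{\bullet}$, being dictated by the left-multiplication convention used in the derived-graph construction). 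The third step invokes the local-homeomorphism property: out-edges of any lift of $v$ biject with out-edges of $v$, so for an edge $e=(v,w)$ of $\Gamma$ the lifts of $e$ biject with $\pi^{-1}(v)$ via their sources, and $\hat{G}$ acts transitively on them. Thus every lift of $e$ equals $h\cdot\tilde{e}_0$ for the unique lift $\tilde{e}_0$ with source $v^{\bullet}$; defining $\nu(e)$ by writing the target of $\tilde{e}_0$ as $\phi_w(\nu(e))$, the bijections $\phi_v$ assemble into an isomorphism from $\tilde{\Gamma}$ onto the derived graph of the $\hat{G}$-voltage graph $(\Gamma,\nu)$, with voltages lying in $\hat{G}\le\Aut(\pi)$, as required.

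The genuine obstacle is the very first step of the forward direction when $\tilde{\Gamma}$ is disconnected: there $\Aut(\pi)$ can be far larger than $k$ and act non-freely — for instance, over the one-vertex graph with no edges a $k$-fold cover is $k$ isolated points with $\Aut(\pi) = S_k$ — so one cannot just use the full deck group and must instead locate a sharply transitive subgroup $\hat{G}$. I would handle this by decomposing $\tilde{\Gamma}$ into connected components: regularity forces the components to be pairwise isomorphic as covers of $\Gamma$ and each to be a connected regular cover, so $\Aut(\pi)$ contains the wreath product of one component's (free, transitive) deck group with the symmetric group permuting the components, and inside that wreath product a sharply transitive subgroup exists (e.g.\ the diagonal copy of a component's deck group together with a cyclic rotation of the components). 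The leftover bookkeeping — whether to invert the voltages, and left versus right multiplication — is routine once a convention is fixed. This is, in outline, the argument of \cite{topograph}.
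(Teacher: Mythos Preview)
The paper does not supply its own proof of this theorem; it is simply quoted from \cite{topograph} (the attribution ``Theorems 3 and 4 in \cite{topograph}'' is all the paper offers), so there is no in-paper argument to compare against. Your sketch follows the standard voltage-graph argument one finds in that literature, and the forward direction is handled correctly, including the care you take with the disconnected case.

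There is, however, a genuine slip in your converse direction. With the paper's convention that an edge $e$ with $\nu(e)=g$ lifts to $[v\times x,\; w\times gx]$, the map $L_h(v\times x)=v\times hx$ is \emph{not} a graph automorphism when $G$ is non-abelian: applying your $L_h$ to the target vertex $w\times gx$ gives $w\times h(gx)=w\times (hg)x$, whereas the unique lift of $e$ with source $v\times hx$ has target $w\times g(hx)=w\times (gh)x$. Your sentence ``since $(gh)x=g(hx)$, the image is again a legal edge'' verifies that the edge you \emph{wrote down} lies in $\tilde{\Gamma}$, but that edge is not the $L_h$-image of the original one. The deck transformations that actually work under this convention act on the right, $R_h(v\times x)=v\times xh$, because right multiplication commutes with the left action of the voltages and hence preserves the edge set; these $R_h$ are transitive on fibers for the same reason you give. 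You later flag left-versus-right as ``routine bookkeeping,'' but in the converse direction it is precisely the content of the argument, not a convention to be fixed afterward.
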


The majority of this paper explores the relationship between the arborescences of a voltage graph $\Gamma$ and the arborescences of its derived graph $\tilde{\Gamma}$. Theorem \ref{regular} allows us to deal with all regular covering graphs in the framework of a voltage. It turns out that regularity is not necessary for Theorem \ref{bigboi}, which holds for all $k$-fold covers; however, the results of this main theorem have nice interpretations in terms of the voltage Laplacian in the regular case. 

\subsection{Constructing arborescences of a covering graph: failure of the obvious approach}

In this subsection, we discuss the relationship between arborescences of $\Gamma$ to arborescences of its cover $\tilde{\Gamma}$. If there were a simple correspondence of arborescences of $\Gamma$ with (sets of) arborescences of $\tilde{\Gamma}$, this could lead to a nice combinatorial proof of Theorem \ref{bigboi}. Unfortunately, we have not found such a relationship; we illustrate the pitfalls.

Let $T$ be an arborescence of the base graph $\Gamma$ rooted at $v$. Given a fixed lift $\tilde{v}$ of $v$ in the covering graph $\tilde{G}$, there exists a unique connected lift of $T$ to $\tilde{\Gamma}$ by the local homeomorphism property of covers. The resulting subtree of $\tilde{\Gamma}$ could potentially be completed to a full arborescence of $\tilde{\Gamma}$, possibly in multiple ways, by choosing an outgoing edge of the remaining vertices in such a way as to avoid creating cycles. We project these edges down to $k - 1$ \emph{vector fields} on $\Gamma$, using some method of partitioning the edges into $k - 1$ vector field classes. It is therefore natural to conjecture that arborescences of $\tilde{\Gamma}$ can be put into correspondence with arborescences of $\Gamma$ by utilizing this construction: each arborescence of $\Gamma$ corresponds to a set of arborescences of $\tilde{\Gamma}$ by lifting and then filling in the remaining edges in various combinations according to some nicely enumerable vector field pattern. This conjecture was our main motivation for our study of vector fields; see Section \ref{sec: vec-fields} for more discussion on vector fields. 

Unfortunately, it is not true that every arborescence of $\tilde{\Gamma}$ must stem from such a construction. As a counterexample, consider the base graph $\Gamma$:
\begin{center}
\begin{tikzpicture}
   [scale=0.65,line width=1.2]
    \coordinate (1) at (0, 3);
    \coordinate (2) at (3/1.71, 0);
    \coordinate (3) at (-3/1.71, 0);
    
    \coordinate (4) at (0, 3 + 0.4);

    \path [draw = red, postaction = {on each segment = {mid arrow = red}}]
    (1)--(2);
    \path [draw = purple, postaction = {on each segment = {mid arrow = red}}]
    (2) to [bend right] (1);
     \path [draw = blue, postaction = {on each segment = {mid arrow = red}}]
     (3) to [bend left] (1);
     \path [draw = black, postaction = {on each segment = {mid arrow = red}}]
     (1) -- (3);
     \path [draw = green, postaction = {on each segment = {mid arrow = red}}]
    (2) -- (3);
    
    \draw[fill] (1) circle [radius=0.1];
    \node at (0.5, 3) {$1$};
    \draw[fill] (2) circle [radius=0.1];
    \node at (3/1.71 + 0.5, 0) {2};
    \draw[fill] (3) circle [radius=0.1];
    \node at (-3/1.71-0.5, 0) {3};
    \end{tikzpicture}
\end{center}
with regular $2$-fold covering graph $\tilde{\Gamma}$:
    \begin{center}
    \begin{tikzpicture}[scale = 0.7, line width=1.2]
    
    \coordinate (4) at (-4, 3);
    
    \coordinate (5) at (3/1.71 - 4, 0);
    \coordinate (6) at (-3/1.71 - 4, 0);
    
    \coordinate (7) at (4, 3);
    
    \coordinate (8) at (3/1.71 + 4, 0);
    \coordinate (9) at (-3/1.71 + 4, 0);

    \node at (-4 + 0.5, 3+0.3) {$1^1$};
    
    \node at (3/1.71 - 4 + 0.5, 0) {$2^1$};
    \node at (-3/1.71 - 4 -0.5, 0) {$3^1$};
    
    \node at (4 + 0.5, 3) {$1^2$};
    
    \node at (3/1.71 + 4 + 0.5, 0) {$2^2$};
    \node at (-3/1.71 + 4 - 0.5, 0) {$3^2$};
    
    \path[draw = black, postaction = {on each segment = {mid arrow = red}}]
 
    (4) -- (6);
    \path[draw = black, postaction = {on each segment = {mid arrow = red}}]
    (7) -- (9);
    \path[draw = blue, postaction = {on each segment = {mid arrow = red}}]
    (9) -- (4);
    \path[draw = blue, postaction = {on each segment = {mid arrow = red}}]
    (6) -- (7);
    \path[draw = green, postaction = {on each segment = {mid arrow = red}}]
    (8) -- (9);
    \path[draw = green, postaction = {on each segment = {mid arrow = red}}]
    (5) -- (6);
    \path[draw = red, postaction = {on each segment = {mid arrow = red}}]
    (7) -- (8);
    \path[draw = red, postaction = {on each segment = {mid arrow = red}}]
    (4) -- (5);
    \path[draw = purple, postaction = {on each segment = {mid arrow = red}}]
    (5) -- (7);
     \path[draw = purple, postaction = {on each segment = {mid arrow = red}}]
    (8) -- (4);

    \draw[fill] (4) circle [radius=0.1];
    \draw[fill] (5) circle [radius=0.1];
    \draw[fill] (6) circle [radius=0.1];
    \draw[fill] (7) circle [radius=0.1];
    \draw[fill] (8) circle [radius=0.1];
    \draw[fill] (9) circle [radius=0.1];
    \end{tikzpicture}
    \end{center}
    
Then the following is an arborescence of $\tilde{\Gamma}$ rooted at $3^1$:
   \begin{center}
    \begin{tikzpicture}[scale = 0.7, line width=1.2]
    
    \coordinate (4) at (-4, 3);
    
    \coordinate (5) at (3/1.71 - 4, 0);
    \coordinate (6) at (-3/1.71 - 4, 0);
    
    \coordinate (7) at (4, 3);
    
    \coordinate (8) at (3/1.71 + 4, 0);
    \coordinate (9) at (-3/1.71 + 4, 0);

    \node at (-4 + 0.5, 3+0.3) {$1^1$};
    
    \node at (3/1.71 - 4 + 0.5, 0) {$2^1$};
    \node at (-3/1.71 - 4 -0.5, 0) {$3^1$};
    
    \node at (4 + 0.5, 3) {$1^2$};
    
    \node at (3/1.71 + 4 + 0.5, 0) {$2^2$};
    \node at (-3/1.71 + 4 - 0.5, 0) {$3^2$};
    
    \path[draw = black, postaction = {on each segment = {mid arrow = red}}]
 
    (4) -- (6);
    \path[draw = blue, postaction = {on each segment = {mid arrow = red}}]
    (9) -- (4);
    \path[draw = green, postaction = {on each segment = {mid arrow = red}}]
    (8) -- (9);
    \path[draw = red, postaction = {on each segment = {mid arrow = red}}]
    (7) -- (8);
    \path[draw = purple, postaction = {on each segment = {mid arrow = red}}]
    (5) -- (7);

    \draw[fill] (4) circle [radius=0.1];
    \draw[fill] (5) circle [radius=0.1];
    \draw[fill] (6) circle [radius=0.1];
    \draw[fill] (7) circle [radius=0.1];
    \draw[fill] (8) circle [radius=0.1];
    \draw[fill] (9) circle [radius=0.1];
    \end{tikzpicture}
    \end{center}
However, this arborescence cannot be constructed from a lift of an arborescence of $\Gamma$ rooted at vertex $3$. Any such arborescence must necessarily contain edges $(3, 1)$ and $(1, 3)$, which form a $2$-cycle. This counterexample uses a regular covering graph, and the base graph has no loops or multiple edges. Therefore non-regularity, loops, and multiple edges are not essential impediments to the construction. This construction might be salvageable, but any combinatorial bijection would need to be more complicated than one that involves merely lifting the base arborescence.

Rather than fixing the data of a single arborescence on $\Gamma$ and enumerating over the data of certain sets of $k - 1$ vector fields on $\Gamma$, one might consider applying the construction in the opposite way: fix a set of $k - 1$ vector fields and enumerate over arborescences. However, it is even less clear how this construction would proceed. Moreover, Theorem \ref{bigboi} suggests that the first method is likelier to succeed. Assuming that Conjecture \ref{posconjecture} is true (the arborescence ratio has positive coefficients), the theorem implies that every arborescence of $\Gamma$ is associated to multiple arborescences of $\tilde{\Gamma}$, which suggests that our first construction attempt is closer to the truth.
It is interesting and surprising that Theorem \ref{bigboi} is true despite the lack of an obvious combinatorial relationship between arborescences of $\Gamma$ and of $\tilde{\Gamma}$.

\subsection{The reduced group algebra}
We wish to define a matrix similar to the Laplacian matrix that tracks all the relevant information in an $G$-voltage graph. In order to do so in general, we need to extend our field of coefficients in order to codify the data given by the voltage function $\nu$. Following the language of Reiner and Tseng in 
\cite{reduced}:

\begin{definition}
The \emph{reduced group algebra} of a finite group $G$ over a ring $R$ is the quotient
\begin{align*}
    \conj{R[G]} = \frac{R[G]}{\left\langle\sum_{g \in G} g\right\rangle},
\end{align*}
where $R[G]$ is the group algebra of $G$ over $R$. That is, we quotient the group algebra by the all-ones vector with respect to the basis given by $G$.
\end{definition}

For simplicity, in the remainder of this paper we take $R = \BZ$. Note that if $G \cong \BZ/2\BZ$, then $\conj{\BZ[G]} \cong \BZ$, with the non-identity element of $G$ identified with $-1$. 

Similarly, if $G \cong \BZ/p\BZ$, with $p$ prime, then $\conj{\BZ[G]} \cong \BZ(\zeta_p)$, where $\zeta_p$ is a primitive $p$-th root of unity and the generator $g$ of $G$ is identified with $\zeta_p$. (To see this, note that both rings arise by adjoining to $\BZ$ an element with minimal polynomial $\sum_{i = 0}^{p-1} x^i$.) The fact that the reduced group algebra of prime cyclic $G$ lies in a field extension over $\BQ \supseteq \BZ$ will be important later in giving us nice formulas for the ratio of arborescences described in the introduction.

\subsection{The voltage Laplacian matrix} \label{voltage-laplacian-section}
We now define a generalization of the Laplacian matrix that takes into account voltages:
\begin{definition}
The \emph{voltage adjacency matrix} $\mathscr{A}(G)$ has entries given by
\begin{align*}
    a_{ij} = \sum_{e = (v_i, v_j) \in E} \nu(e) \text{wt}(e),
\end{align*} where we consider $\nu(e)$ as an element of the reduced group algebra $\conj{\BZ[G]}$. That is, the $i,j$-th entry consists of sum of the \emph{volted} weights of all edges going from the $i$-th vertex to the $j$-th vertex. The \emph{voltage Laplacian matrix} $\mathscr{L}(\Gamma)$ is defined as
\begin{align*}
    \mathscr{L}(\Gamma) = D(\Gamma) - \mathscr{A}(\Gamma)
\end{align*}
where $D(\Gamma)$ is the (unvolted) weighted degree matrix as described in Definition \ref{Laplacian}.
\end{definition}

Note that when every edge has trivial voltage, then $\mathscr{L}(\Gamma) = L(\Gamma)$, so that the voltage Laplacian is indeed a generalization of the Laplacian. 
Since we consider the edge weights of $\Gamma$ as indeterminates, we treat the entries of $\mathscr{L}(G)$ as elements of $\conj{\BZ[G]}[\wt(E)]$---that is, the polynomial ring of edge weights with coefficients in the reduced group algebra. We will often abuse notation and refer to this ring as simply $\conj{\BZ[G]}[E]$.

\begin{example}\label{voltlapex}

Let $\Gamma$ be the $\BZ/3\BZ$-voltage graph in Figure~\ref{fig:voltage-graph}. Under the identification $\conj{\BZ[\BZ/3\BZ]} \cong \BZ(\zeta_3)$, the voltage Laplacian of $\Gamma$ is
\begin{align*}
    \mathscr{L}(\Gamma) &= \begin{bmatrix}
        a + b & 0 & 0 \\
        0 & c & 0\\
        0 & 0 & d + e
    \end{bmatrix} - \begin{bmatrix}
        \zeta_3 a & b & 0 \\
        0 & 0 & \zeta_3^2c\\
        \zeta_3^2 d & e  & 0
    \end{bmatrix}\\
    &= \begin{bmatrix}
        (1 - \zeta_3)a + b & -b & \phantom{00000}0\phantom{00000}\\
        0 &c & -\zeta_3^2c\\
        -\zeta_3^2 d & -e & d  + e
    \end{bmatrix}
\end{align*}
\end{example}

\subsection{Notation}\label{sec:notations}
Before reintroducing our main result, we summarize the conventions and notation that will be used consistently throughout the rest of the paper.
For a graph $\Gamma=(V,E,\wt)$ and a covering graph $\tilde\Gamma=(\tilde V,\tilde E,\wt)$,
\begin{itemize}[nosep]
    \item The parameter $n$ refers to the order (number of vertices) of $\Gamma$, and we write $V=\{v_i:i\in[n]\}$
    \item The parameter $k$ refers to the degree of the cover, i.e. $\tilde \Gamma$ is a $k$-fold covering graph of $\Gamma$. When the cover is regular, we have $k=|G|$.
    \item We write the vertices of $\tilde\Gamma$ as $\tilde V=\{v_i^j:i\in[n],j\in[k]\}$, where $v_i^1,\cdots,v_i^k$ are the lifts of $v_i$. For regular covers, we write $\tilde{V}=\{v_i^g:i\in[n],g\in G\}$, associating each vertex with an element $G$ for the purposes of the following bullet point.
    \item In the case of a regular $G$-cover, the edges in $\tilde \Gamma$ are $\tilde E=\{(v_i^h,v_j^{\nu(v_i)h}):(v_i,v_j)\in E,h\in G\}$. When the cover is not necessarily regular, we assign a permutation in $\mathfrak S_k$ to each edge of $\Gamma$. Then an edge $e=(v_a,v_b)$ with permutation $\sigma$ is lifted to the edges $\{(v_a^i,v_b^{\sigma(i)}):i\in[k]\}$; by abuse of notation, we write $\nu(e)=\sigma$. Note the difference between $\sigma(i)$ and group multiplication. For example, in Example~\ref{ex:perm}, the permutation associated to the edge $e$ is $132$ {in one-line notation}.
\end{itemize}

\section{Proof of the Main Theorem} \label{sec: main-thm}

\subsection{Restriction of scalars}
\begin{definition}{
	Let $R$ be a commutative ring, and let $S$ be a free $R$-algebra of finite rank. Let $T$ be an $S$-linear transformation on a free $S$-module $M$ of finite rank. Then we may also consider $M$ as a free $R$-module of finite rank, and $T$ as an $R$-linear transformation; this is known as \emph{restriction of scalars}. We write $\det_R T$ to denote the determinant of $T$ as an $R$-linear transformation. 
}
\end{definition}

Recall that the voltage Laplacian $\mathscr{L}(\Gamma)$ has entries in the reduced group algebra augmented by edge weights: $S = \conj{\BZ[G]}[E]$. Letting $R = \BZ[E]$, we may also consider $\mathscr{L}(\Gamma)$ as an $R$-linear transformation on a $R$-module of rank $(k - 1)n$. Note that due to the definition of the Laplacian (the fact that \textbf{row} entries sum to 0), we will consider our linear transformation to act on the right. 

\begin{example}\label{restrictionexample}
    Returning to Example \ref{voltlapex}, the voltage Laplacian $\mathscr{L}(\Gamma)$ is a matrix that represents a linear transformation on a $\BZ(\zeta_3)[E]$-module with basis vectors indexed by the three vertices of $\Gamma$:
    $$
        \mathscr{L}(\Gamma) = \begin{bmatrix}
        (1 - \zeta_3)a + b & -b & 0\\
        0 &c & -\zeta_3^2c\\
        -\zeta_3^2 d & -e & d  + e
    \end{bmatrix}
    $$
    We may consider this same module as a $\BZ[E]$-module instead, simply by disallowing scalar multiplication outside of the subring $\BZ[E] \subseteq \BZ(\zeta_3)[E]$. Now we look at the basis vectors of the $\BZ[E]$-module. Since the $\BZ[E]$-span of a set of vectors is smaller than its $\BZ(\zeta_3)[E]$-span, however, we will need more basis vectors than before in order to span the entire module. One basis for this module has basis vectors doubly indexed by vertices and the two non-identity group elements of $\BZ/3\BZ$, which shows that the module has $\BZ[E]$-rank $6$. Ordering basis vectors as $v_{1}^{g}, v_{2}^{g}, v_{3}^{g}, v_{1}^{g^2}, v_{2}^{g^2}, v_{3}^{g^2}$, the voltage Laplacian may considered as a $\BZ[E]$-linear transformation, with matrix
    \begin{align*}
        [\mathscr{L}(\Gamma)]_{\BZ[E]} = \begin{bmatrix}
            a + b & -b & 0 & -a & 0 & 0 \\
            0 & c & c &  0 & 0 & c\\
            d & -e & d + e & d & 0 & 0\\
            a & 0 & 0 & 2a+b & -b & 0\\
            0& 0 & -c & 0& c & 0\\
            -d & 0 & 0 & 0 & -e & d+e
        \end{bmatrix}
    \end{align*}
    and the $\BZ[E]$-determinant of this transformation is
    \begin{align*}
        \det_{\BZ[E]} [\mathscr{L}(\Gamma)]&:= \det [\mathscr{L}(\Gamma)]_{\BZ[E]}\\ &= 3a^2c^2d^2 + 3b^2c^2d^2 + 6abc^2d^2 + 9a^2c^2e^2 + 3b^2c^2e^2 + 9abc^2e^2 + 9a^2c^2de + 3b^2c^2de + 12abc^2de
    \end{align*}
\end{example}

\subsection{The prime cyclic case}\label{theprimecycliccase}

The framework provided by restriction of scalar now allows us to prove that Corollary \ref{primecyclic} follows from Theorem \ref{bigboi}.

\begin{proof}[Proof of Corollary \ref{primecyclic} given Theorem \ref{bigboi}]
	The corollary follows from the theorem if we can show that $\det_{\BZ[E]} [\mathscr{L}(\Gamma)] = N_{\BQ(\zeta_p)/\BQ}\left(\det  \left[\mathscr{L}(\Gamma)\right]\right)$. Theorem 1 of \cite{sylvester} states that if $A$ is a commutative ring, if $B$ is a commutative subring of $\mathrm{Mat}_n(A)$, and if $M \in \mathrm{Mat}_m(B)$, then 
	\begin{align*}
	    \det_A [M] = \det_A (\det_B [M])
	\end{align*}
	In this case, let $A := \BQ[E]$. The reduced group algebra $B := \BZ(\zeta_p)[E]$
	may be realized as a subring of $\mathrm{Mat}_{p-1}(A)$, with an element $\alpha$ of $\BZ(\zeta_p)[E]$ being identified with the $\BZ[E]$-matrix corresponding to multiplication by $\alpha$ in $\BZ(\zeta_p)[E]$, where we view $\BZ(\zeta_p)[E]$ as an $\BZ[E]$-module. Note that $\BZ[E]$ and $\BZ(\zeta_p)[E]$ are both commutative. Finally, we let $M = \mathscr{L}(\Gamma)$. But the field norm $N_{\BQ(\zeta_p)/\BQ}(\alpha)$ is defined as the determinant of the linear map $x \mapsto \alpha x$ as a $\BQ$-linear transformation, or, equivalently in our case, a $\BZ$-linear transformation. When extended to $\BQ(\zeta_p)[E]$, this definition shows that
	\begin{align*}
	    \det_{\BZ[E]} \left(\det_{\BZ(\zeta_p)[E]} [\mathscr{L}(\Gamma)]\right) = N_{\BQ(\zeta_p)/\BQ}\left(\det_{\BQ(\zeta_p)[E]}[\mathscr{L}(\Gamma)] \right)
	\end{align*}
	as desired.
\end{proof}

\begin{example}
Let $\Gamma$ be the graph from Figure~\ref{fig:voltage-graph}. We compute $\det [\mathscr{L}(\Gamma)]$ in Example \ref{NVFexample}:
\begin{align*}
    \det [\mathscr{L}(\Gamma)] = (1-\zeta_3) bcd + (1 - \zeta_3)acd + (1-\zeta_3^2)bce + (1- \zeta_3)(1 - \zeta_3^2)ace
\end{align*}
Since voltage is given by $\BZ/3\BZ$, the reduced group algebra is $\BZ(\zeta_3)[E] \subset \BQ(\zeta_3)[E]$, which we treat as an extension over $\BQ$. The Galois norm in this case is the same as the complex norm, since the Galois conjugate of an element of $\BQ(\zeta_3)[E]$ is its complex conjugate. This norm is
\begin{align*}
    \det [\mathscr{L}(\Gamma)] \det [\conj{\mathscr{L}(\Gamma)}]  &= \left((1-\zeta_3) bcd + (1 - \zeta_3)acd + (1-\zeta_3^2)bce + (1- \zeta_3)(1 - \zeta_3^2)ace \right)\\
    &\cdot \left((1-\zeta_3^2) bcd + (1 - \zeta_3^2)acd + (1-\zeta_3)bce + (1- \zeta_3^2)(1 - \zeta_3)ace\right)\\
    &= 3a^2c^2d^2 + 3b^2c^2d^2 + 6abc^2d^2 + 9a^2c^2e^2 + 3b^2c^2e^2 + 9abc^2e^2 + 9a^2c^2de + 3b^2c^2de + 12abc^2de
\end{align*}
which matches $\det [\mathscr{L}(\Gamma)]_{\BZ[E]}$ from Example \ref{restrictionexample}.
\end{example}

\subsection{Triangularization}

In this section we build up the machinery to prove our main result, Theorem \ref{bigboi}.

For ease of notation, we will first fix an ordering on the basis of $\tilde\Gamma$.
Most of the time we won't assume the covering graph to be regular, and using the notations in Section~\ref{sec:notations}, we order the basis vectors of $\tilde\Gamma$ in a colexicographic order, i.e. $v_1^1<\cdots<v_n^1<v_1^2<\cdots<v_n^2<\cdots<v_1^k<\cdots<v_n^k$.

\begin{definition}\label{lowerright}
Let $\{v_1,\cdots,v_n\}$ be the set of vertices of our graph $\Gamma$, let $\tilde \Gamma$ be a $k$-fold cover of $\Gamma$, where vertex $v_i$ is lifted to $v_i^1,\cdots,v_i^k$.
Define $n(k-1)\times n(k-1)$ matrices $D$ and $A$ with basis $v_1^2,\cdots,v_n^2,v_1^3,\cdots,v_n^3,\cdots,v_1^k,\cdots,v_n^k$ as follows.
\[A[v_i^t,v_j^r]=\sum_{e=(v_i^t,v_j^r)}\wt(e)-\sum_{e=(v_j^r,v_i^1)}\wt(e)\]
\[D[v_i^t,v_i^t]=\sum_{e\in E_s(v_i^t)}\wt(e)\]
for $1<t,r\leq k.$ Finally, we define
\[ [\mathscr{L}(\Gamma)]_{\BZ[E]}:=D-A.\]
\end{definition}

In the case of regular covers, we can choose an ordering $g_1,\ldots,g_k$ on the group elements, and use the order  $v_1^{g_1}<\cdots<v_n^{g_1}<v_1^{g_2}<\cdots<v_n^{g_2}<\cdots<v_1^{g_k}<\cdots<v_n^{g_k}$ on the basis vectors of $\tilde{\Gamma}$. If $g_1$ is the identity element, Definition \ref{lowerright} gives the $\BZ$-linearization of the voltage Laplacian. 
Note that in the case of non-regular covers, the matrix cannot be interpreted as the $\BZ$-linearization of a voltage Laplacian. 
 
With this definition in hand, we remind the reader of Theorem ~\ref{bigboi}:

\setcounter{section}{1}
\setcounter{restate}{2}
\begin{restate}
	 Let $\Gamma = (V, E, \wt)$ be an edge-weighted  multigraph, and let $\tilde{\Gamma}$ be a $k$-fold covering graph of $\Gamma$. Then for any vertex $v$ of $\Gamma$ and any lift $\tilde{v}$ of $v$, we have
	\begin{align*}
	\frac{A_{\tilde{v}}(\tilde{\Gamma})}{A_v(\Gamma)} = \frac{1}{k}{\det } [\mathscr{L}(\Gamma)]_{\BZ[E]}
	\end{align*}
	with $[\mathscr{L}(\Gamma)]_{\BZ[E]}$ given by Definition \ref{lowerright}.
\end{restate}
\setcounter{section}{3}

To prove this theorem, we carefully apply a change of basis to the Laplacian matrix of $\tilde{\Gamma}$.  

\begin{lemma}[Triangularization Lemma]\label{trilem}
    Let $\Gamma$ be as in Theorem \ref{bigboi}. Write $L(\tilde{\Gamma})$ with basis vectors ordered as above. Let
    \begin{align*}
        S = \begin{bmatrix}
           \id_{n} & \id_{n} &\dots  & \id_{n}\\
           0_{n}& & & \\
           \vdots  & & \id_{(k - 1)n} & \\
           0_{n} & & &
        \end{bmatrix}
    \end{align*}
    Then the change of basis given by $S$ yields the following block triangularization of $L(\tilde{\Gamma})$:
    \begin{align}\label{triangle}
        S L(\tilde{\Gamma}) S^{-1} = \begin{bmatrix}
             L(\Gamma) & 0\\
             * & \left[\mathscr{L}(\Gamma)\right]_{\BZ[E]}
        \end{bmatrix}
    \end{align}
\end{lemma}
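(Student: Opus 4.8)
The plan is to establish \eqref{triangle} by a direct block computation; the only place where genuine content enters is the way the covering structure organizes $L(\tilde{\Gamma})$. Write $L(\tilde{\Gamma})$ as a $k\times k$ array of $n\times n$ blocks $L_{ab}$, with $L_{ab}[i,j]=L(\tilde{\Gamma})[v_i^a,v_j^b]$; this is exactly the colexicographic block decomposition corresponding to the ordering fixed above. First I would record that $S$ is block-unipotent: $S=\mathrm{Id}+E$, where $E$ has blocks $E_{1b}=\id_n$ for $2\le b\le k$ and all other blocks zero, and $E^2=0$ because a nonzero block product would force an index to be both $\ge 2$ and equal to $1$. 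Hence $S^{-1}=\mathrm{Id}-E$ has integer entries and $\det S=1$, so conjugation by $S$ is an honest change of basis over $\BZ[E]$ that preserves determinants — this is what will let Lemma~\ref{trilem} feed into the proof of Theorem~\ref{bigboi} via the Matrix Tree Theorem.

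The key structural input is the following consequence of the covering axioms. For an edge $e=(v_i,v_j)$ of $\Gamma$, its lifts are precisely the $k$ edges running from the lifts of $v_i$ to lifts of $v_j$, one out of each $v_i^a$ and, by the local-homeomorphism property, one into each $v_j^b$; moreover $v_i^a$ and $v_i$ have equal out-degree. Consequently, summing the defining identity $L(\tilde{\Gamma})=D(\tilde{\Gamma})-\mathscr{A}(\tilde{\Gamma})$ over an entire fiber collapses to the base graph, yielding the block identities
\[\sum_{b=1}^{k}L_{ab}=L(\Gamma)\ \ (\text{all }a),\qquad\sum_{a=1}^{k}L_{ab}=L(\Gamma)\ \ (\text{all }b).\]
I would prove these carefully, keeping track of loops (which contribute equally to $D$ and $\mathscr{A}$) and of diagonal entries.

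Now the multiplication is mechanical. Right-multiplication by $S^{-1}=\mathrm{Id}-E$ leaves block column $1$ of $L(\tilde{\Gamma})$ alone and replaces block column $b$ by (block column $b$) $-$ (block column $1$) for each $b\ge2$; call the result $N$, so $N_{a1}=L_{a1}$ and $N_{ab}=L_{ab}-L_{a1}$ for $b\ge2$. Left-multiplication by $S=\mathrm{Id}+E$ then replaces block row $1$ by the sum of all block rows of $N$ and leaves block rows $2,\dots,k$ untouched. Using the column-sum identity, the first block row of $SL(\tilde{\Gamma})S^{-1}$ has $(1,1)$-block $\sum_a L_{a1}=L(\Gamma)$ and, for $b\ge2$, $(1,b)$-block $\sum_a(L_{ab}-L_{a1})=L(\Gamma)-L(\Gamma)=0$, which is the row $\begin{bmatrix}L(\Gamma)&0\end{bmatrix}$ of \eqref{triangle}. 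For $a\ge2$ the block rows are unchanged: the blocks $L_{a1}$ form the $*$, and the blocks $L_{ab}-L_{a1}$ with $a,b\ge2$ form the bottom-right corner. The last step is to check entrywise that $[\,L_{ab}-L_{a1}\,]_{a,b\ge2}$ coincides with $[\mathscr{L}(\Gamma)]_{\BZ[E]}$ of Definition~\ref{lowerright}, splitting into off-diagonal entries (where $L[v_i^a,v_j^b]-L[v_i^a,v_j^1]$ is a difference of two sums of lifted edge weights) and diagonal entries (where the out-degree term of $D$ survives). Everything past the fiber-sum identity is bookkeeping, so that identity — together with the care required for loops and diagonal terms — is the part of the write-up I would treat most carefully.
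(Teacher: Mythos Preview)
Your proposal is correct and follows essentially the same route as the paper. Both arguments rest on the fiber-sum identity $\sum_a L(\tilde{\Gamma})[v_i^a,v_j^b]=L(\Gamma)[i,j]$ to produce the top block row $[L(\Gamma)\;\;0]$, and both observe that the effect on the lower-right $(k-1)n\times(k-1)n$ corner is precisely the column subtraction built into Definition~\ref{lowerright}. The only difference is cosmetic: the paper phrases the computation as a change of basis $v_i^1\mapsto\beta_i=\sum_r v_i^r$ and tracks the action of the associated linear map $T$, while you carry out the same calculation in explicit block-matrix arithmetic (computing $S^{-1}=\mathrm{Id}-E$ and multiplying out $S L(\tilde{\Gamma}) S^{-1}$). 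Your row-sum identity $\sum_b L_{ab}=L(\Gamma)$ is true but not actually needed for the argument.
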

\begin{proof}
    Let $\beta_i = \sum_{j \in [k]} v_{i}^{j}$. Conjugation by $S$ corresponds to a change of basis that maps $v_{i}^{1} \mapsto \beta_i$ and $v_{i}^{j} \mapsto v_{i}^{j}$ when $j \neq 1$. Therefore, all we need to do is examine the action of the linear transformation corresponding to the matrix $L(\tilde{\Gamma})$ on this new basis. Denote this linear transformation as $T$.

    First, we show that
    \begin{align*}
        T(\beta_i) = \sum_{j = 1}^{n} \ell_{ij} \beta_j
    \end{align*}
    where $\ell_{ij}$ is the $(i,j)$-entry of $L(\Gamma)$. To see this, consider the $k$ rows of $L(\tilde{\Gamma})$ corresponding to the fiber $\{v_i^r\}_{r \in [k]}$ of $v_i$. The sum of these rows is equal to $T(\beta_i)$, expressed as a row vector with respect to the standard basis. Choose a column of $L(\tilde{\Gamma})$ corresponding to $v_j^r$.  The entries of this column in the previously mentioned rows correspond (with negative sign) to edges to $v_j^r$ from some element in the fiber of $v_i$.  Since there should be one of these for each edge $(v_i,v_j)$ in $\Gamma$, the sum of these values is $\ell_{ij}$.  Now choose the column of $L(\tilde{\Gamma})$ corresponding to $v_i^r$.  The entries of this column in the previously mentioned rows on the off-diagonal correspond (with negative sign) to edges to $v_i^r$ from some element in the fiber of $v_i$ other than $v_i^r$.  The diagonal entry of the column corresponds (with negative sign) to edges from $v_i^r$ to itself as well as (with positive sign) all edges out of $v_i^r$.  Adding these values, we get $\ell_{ii}$, since there is one edge into $v_i^r$ from a vertex in the fiber of $v_i$ for each edge from $v_i$ to itself and one edge out of $v_i^r$ for each edge out of $v_i$.
    Thus, the sum of the $n$ rows of $L(\tilde{\Gamma})$ corresponding to the fiber $\{v_i^r\}_{r \in [k]}$ is precisely 
    \begin{align*}
    T(\beta_{i}) = \sum_{j  = 1}^{n} \sum_{r \in [k]} \ell_{ij} v_j^r = \sum_{i  =1}^{n} \ell_{ij} \beta_j
    \end{align*}
    as desired. Therefore, the upper-left and upper-right blocks of (\ref{triangle}) are correct.

    The effect of our change of basis on the lower-right $(k-1)n \times (k-1)n$  block of $L(\tilde{\Gamma})$ is to subtract the $i$-th column of $L(\tilde{\Gamma})$, for $i \in [n]$, from columns $i + n, i + 2n, \dots, i + (k - 1)n$. This exactly follows the construction of $[\mathscr{L}(\Gamma)]_{\BZ[E]}$ from Definition \ref{lowerright}. 
    
\end{proof}

\begin{example}
    We illustrate Lemma \ref{trilem} using Example \ref{ex1}, taking $\Gamma$ to be the graph in Figure \ref{fig:voltage-graph} and $\tilde{\Gamma}$ to be its corresponding derived graph as in Figure \ref{fig:derived-graph}. The Laplacian matrix of $\tilde{\Gamma}$ is 
    \begin{align*}
        L(\tilde{\Gamma}) = \begin{bmatrix}
        a + b & -b & 0 & -a & 0 & 0 & 0 & 0 & 0 \\
        0 & c & 0 & 0 & 0 & 0 & 0 & 0 & -c \\
        0 & -e & d + e & 0 & 0 & 0 & -d & 0 & 0\\
        0 & 0 & 0 & a +b & -b & 0 & -a & 0 & 0 \\
        0 & 0 & -c & 0 & c & 0 & 0 & 0 & 0 \\
        -d & 0 & 0 & 0 & -e & d + e & 0 & 0 & 0 \\
        -a & 0 & 0 & 0 & 0 & 0 & a + b & -b & 0\\
        0 & 0 & 0 & 0 & 0  -c & 0 & c & 0 \\
        0 & 0 & 0 & -d & 0 & 0 & 0 & -e & d + e
        \end{bmatrix}.
    \end{align*}
    Then letting $S$ be the $9 \times 9$ change of basis matrix defined in the lemma, we have 
    \begin{align*}
        SL(\tilde{\Gamma})S^{-1} = \begin{bmatrix}
 b	& -b	&  0&	  0	& 0	 & 0&	    0	& 0	  &0\\
 0	& c	& -c&	  0	& 0	 & 0	 &   0	& 0	 & 0\\
-d&	-e &	d+e	&  0&	 0	 & 0	&    0&	 0	 & 0\\
 0	& 0	&  0&	a+b	&-b	 & 0&	   -a&	 0	 & 0\\
 0	& 0	 &-c	 & 0	& c	 & c	 &   0	& 0	  &c \\
-d	& 0	 & 0	&  d&	-e &	d+e &	    d&	 0	&  0\\
-a	& 0	&  0	&  a	& 0	&  0&	2a+b&	-b	 & 0\\
 0	& 0	 & 0&	  0	 &0	& -c&	    0&	 c	&  0\\
 0	& 0	 & 0	& -d&	 0	 & 0	 &   0&	-e	&d+e
 \end{bmatrix}.
    \end{align*}
Recall from Example \ref{ex:laplacian} that
\begin{align*}
    L(\Gamma) = \begin{bmatrix}
b & -b & 0 \\
0 & c & -c \\
-d & -e & d+e
\end{bmatrix},
\end{align*}
and recall from Example \ref{restrictionexample} that 
\begin{align*}
    [\mathscr{L}(\Gamma)]_{\BZ[E]} = \begin{bmatrix}
            a + b & -b & 0 & -a & 0 & 0 \\
            0 & c & c &  0 & 0 & c\\
            d & -e & d + e & d & 0 & 0\\
            a & 0 & 0 & 2a+b & -b & 0\\
            0& 0 & -c & 0& c & 0\\
            -d & 0 & 0 & 0 & -e & d+e
        \end{bmatrix}.
\end{align*}

We indeed find that $L(\Gamma)$ appears as the upper-left block of $SL(\tilde{\Gamma})S^{-1}$ and that $[\mathscr{L}(\Gamma)]_{\BZ[E]}$ appears as the lower-right block of $SL(\tilde{\Gamma})S^{-1}$, as predicted by the lemma. Note also the upper-right block of zeros in $SL(\tilde{\Gamma})S^{-1}$.
\end{example}
We would like to make the following statement:
\begin{align} \label{eq:dream}
    \det[SL(\tilde{\Gamma})S^{-1}]_i^i = \det[L(\tilde{\Gamma})]_i^i
\end{align}
for any $i \in [k]$. This would allow us to obtain an expression for $\frac{A_{\tilde{v}}(\tilde{\Gamma})}{A_v(\Gamma)}$ from Lemma \ref{trilem}, since the left-hand side of \ref{eq:dream} is $$\det[L({\Gamma})]_i^i \det[\mathscr{L}(\Gamma)]_{\BZ[E]}= A_{v_i}(\Gamma)\det[\mathscr{L}(\Gamma)]_{\BZ[E]}$$ and the right-hand side is $$\det[L(\tilde{\Gamma})]_i^i = A_{\tilde{v}_i^1}(\tilde{\Gamma}).$$ Unfortunately, Equation \ref{eq:dream} is \emph{not true} as stated. In general, a given minor does not remain invariant under change of basis. However, Equation \ref{eq:dream} turns out to be \emph{nearly true} in that it is only off by a factor of $k$. Our goal is to instead prove the correct statement 
\begin{align*} 
    \det[SL(\tilde{\Gamma})S^{-1}]_i^i = k\cdot \det[L(\tilde{\Gamma})]_i^i.
\end{align*}
To this end, we define $U = SL(\tilde{\Gamma})S^{-1}$. This is the matrix that we will use to connect the two sides of Theorem \ref{bigboi}. Without loss of generality, assume that we want to root our arborescences of $\Gamma$ at vertex $v_1$ and our arborescences of $\tilde{\Gamma}$ at vertex $v_{1}^{1}$. Then the following result is immediate from Lemma \ref{trilem} and Theorem \ref{MTT2}:

\begin{corollary} \label{tricorollary}
\[\det [U_{1}^1] = A_{v_1}(\Gamma) \det [\mathscr{L}(\Gamma)]_{\BZ[E]}.\]
\end{corollary}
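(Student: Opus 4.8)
The statement to prove, Corollary~\ref{tricorollary}, asserts that $\det[U_1^1] = A_{v_1}(\Gamma)\det[\mathscr{L}(\Gamma)]_{\BZ[E]}$, where $U = SL(\tilde{\Gamma})S^{-1}$.

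The plan is to combine the block-triangular form from the Triangularization Lemma (Lemma~\ref{trilem}) with the Matrix Tree Theorem (Theorem~\ref{MTT2}). First I would observe that deleting the first row and column of $U$ is compatible with the block structure in~\eqref{triangle}: the first row and column of $U$ both sit inside the upper-left $n \times n$ block corresponding to the basis vectors $\beta_1, \dots, \beta_n$. Hence $U_1^1$ is again block lower-triangular, with upper-left block $L(\Gamma)_1^1$ (the Laplacian of $\Gamma$ with row and column $1$ removed) and lower-right block $[\mathscr{L}(\Gamma)]_{\BZ[E]}$, the latter being untouched since it occupies rows and columns indexed by $v_i^j$ with $j \neq 1$. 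The zero block in the upper-right of~\eqref{triangle} persists after deletion, so $\det[U_1^1] = \det[L(\Gamma)_1^1] \cdot \det[\mathscr{L}(\Gamma)]_{\BZ[E]}$ by the determinant of a block-triangular matrix.

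Next I would apply the Matrix Tree Theorem (Theorem~\ref{MTT2}) to identify $\det[L(\Gamma)_1^1] = A_{v_1}(\Gamma)$, which immediately yields the claimed identity. Strictly speaking one should note that the Matrix Tree Theorem is being applied over the polynomial ring $\BZ[E]$ rather than a field, but the theorem is an identity of polynomials in the edge weights and holds over any commutative ring, so this causes no difficulty.

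The only genuine point requiring care — and the main (mild) obstacle — is verifying that the row and column being deleted really do lie entirely within the upper-left block and that deletion does not disturb the block decomposition; this is where the choice to root the $\tilde{\Gamma}$-arborescences at the lift $v_1^1$ (the lift whose basis vector was replaced by $\beta_1$) is used. Once the colexicographic ordering of Section~\ref{sec:notations} is in place, the first index corresponds to $\beta_1$, deletion of row/column $1$ leaves the $*$ block and the $[\mathscr{L}(\Gamma)]_{\BZ[E]}$ block intact, and the computation is routine. Everything else is bookkeeping already carried out in Lemma~\ref{trilem}.
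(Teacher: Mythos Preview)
Your proposal is correct and matches the paper's approach exactly: the paper simply states that the corollary is ``immediate from Lemma~\ref{trilem} and Theorem~\ref{MTT2},'' and what you have written is precisely the unpacking of that remark---delete the first row and column from the block-triangular form~\eqref{triangle}, observe the block structure survives, take the product of block determinants, and apply the Matrix Tree Theorem to the upper-left block. Your extra care about the deleted index lying in the upper-left block is a valid clarification but not a departure from the paper's (implicit) argument.
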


To complete the proof of Theorem \ref{bigboi}, we need to show that
\begin{align*}
    \det [U_{1}^1] = k\cdot \det [L_1^1(\tilde{\Gamma})].
\end{align*}

\subsection{The two-step change of basis}
We will show the above equality by factoring $S$ as $QP$ for some matrices $Q$ and $P$ specified in Section~\ref{sec:proof}.  This means conjugation by $S$ is the same as conjugation by $P$ and then conjugation by $Q$.  In this section we will prove two lemmas that will tell us how conjugation by each of these matrices affects minors.

\begin{lemma}\label{lem1}
Let $L$ be the Laplacian matrix of some graph $\Gamma = (V, E, \wt)$. Fix a basis vector $v_i$, and let $P$ be the change of basis matrix that maps $v_{i} \mapsto \sum_{j=1}^n \alpha_jv_j$ with $\alpha_i\neq 0$. That is, $P$ is the identity matrix but with $\alpha_j$ in entry $(i,j)$ for each $j\in J$.  Then
\begin{align*}
    \det [(PLP^{-1})_{i}^i] = \left(\sum_{j=1}^n \frac{\alpha_j}{\alpha_i}\right)A_{v_i}(\Gamma)
\end{align*}
\end{lemma}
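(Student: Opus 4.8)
The plan is to relate the minor $\det[(PLP^{-1})_i^i]$ back to minors of $L$ itself by tracking carefully what the conjugation does and then invoking the Matrix Tree Theorem (Theorem~\ref{MTT2}). First I would observe that $P$ is a rank-one-type modification of the identity: it replaces the basis vector $v_i$ by $\sum_j \alpha_j v_j$ and fixes every other $v_j$. Conjugating $L$ by $P$ therefore has a predictable effect on rows and columns: in terms of the linear transformation $T$ represented by $L$ acting on the right, the new matrix $PLP^{-1}$ represents $T$ in the new basis $\{v_j\}_{j\ne i}\cup\{\sum_j\alpha_j v_j\}$. Since $L$ is a Laplacian, its rows sum to zero, i.e. $T$ annihilates the all-ones vector $\mathbf 1=\sum_j v_j$; this is the structural fact that makes the statement clean, and I expect to use it to simplify whatever row/column operations appear.

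Next I would reduce to an explicit computation of $\det[(PLP^{-1})_i^i]$. Deleting row and column $i$ from $PLP^{-1}$: the column operation coming from $P^{-1}$ only modifies column $i$ (it expresses the old $v_i$-coordinate in terms of the new basis), so after deleting column $i$ the remaining columns are unchanged; the row operation coming from $P$ replaces row $i$ by the combination $\sum_j\alpha_j(\text{row }j)$ of rows of $LP^{-1}$, but row $i$ is deleted as well. So the surviving $(n-1)\times(n-1)$ block is obtained from the corresponding block of $LP^{-1}$, and $P^{-1}$ acts on columns by a transformation that only touches column $i$. The upshot I anticipate is that $\det[(PLP^{-1})_i^i]$ equals a signed sum, over $j$, of $\tfrac{\alpha_j}{\alpha_i}$ times the minor of $L$ obtained by deleting row $i$ and column $i$ but with row $j$ ``promoted'' — more precisely, $\det[(PLP^{-1})_i^i] = \sum_{j=1}^n \tfrac{\alpha_j}{\alpha_i}\,\det[L^{\,j}_{\,i}]$ up to the appropriate sign, where $L^{j}_{i}$ denotes $L$ with row $j$ and column $i$ removed. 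Here is where I would be careful with signs and with the $1/\alpha_i$ factor: writing $P^{-1}$ explicitly (it is the identity with $1/\alpha_i$ in position $(i,i)$ and $-\alpha_j/\alpha_i$ in position $(i,j)$ for $j\ne i$) and doing cofactor expansion should pin these down.

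Finally I would identify each term combinatorially. The all-minors Matrix Tree Theorem (which is essentially Chaiken's result cited as Theorem~\ref{MTT2}, in the form giving $A_{v_i}(\Gamma)=\det[L_i^i]$, together with its off-diagonal analogue) says that $\det[L^{j}_{i}]$, up to sign, counts weighted spanning forests directed appropriately — but when we sum $\alpha_j\det[L^j_i]$ over all $j$ and use $\sum_j(\text{row }j\text{ of }L)=0$, all the off-diagonal contributions telescope so that $\sum_{j}\alpha_j\det[L^j_i] = \alpha_i\det[L^i_i]\cdot(\text{something})$; combined with the $1/\alpha_i$ this should collapse to exactly $\bigl(\sum_{j=1}^n \tfrac{\alpha_j}{\alpha_i}\bigr)A_{v_i}(\Gamma)$. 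I expect the main obstacle to be precisely this last bookkeeping step: showing that the linear combination of cofactors along row $i$, after the conjugation, reorganizes into $\bigl(\sum_j \alpha_j/\alpha_i\bigr)\det[L_i^i]$ rather than some messier forest sum. The cleanest route is probably to avoid forests entirely and argue purely linear-algebraically: write $PLP^{-1}$, use multilinearity of the determinant in the $i$-th row to split $\det[(PLP^{-1})_i^i]$ over the $n$ terms $\alpha_j v_j$, note that for $j\ne i$ the resulting matrix has its $i$-th (displaced) row equal to a row already present elsewhere up to the column-$i$ deletion — so only the ``diagonal'' structure survives — and the fact that $L\mathbf 1^\top$ (column sums) need not vanish while $\mathbf 1 L$ (row sums) does is what forces the coefficient $\sum_j\alpha_j/\alpha_i$ to appear symmetrically. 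Then Theorem~\ref{MTT2} finishes it by identifying $\det[L_i^i]=A_{v_i}(\Gamma)$.
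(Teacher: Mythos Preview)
Your linear-algebraic strategy is sound and, once cleaned up, gives a considerably shorter proof than the paper's.  The paper argues combinatorially: it factors $P=P_nP_{n-1}\cdots P_2$, and for each factor builds an auxiliary graph $\Gamma^{(k)}$ by adding three types of new edges so that $L_1^1(\Gamma^{(k)})$ matches the conjugated minor; it then classifies the arborescences of $\Gamma^{(k)}$ into four categories and shows by a sign-cancellation and bijection argument that $A_{v_1^{(k)}}(\Gamma^{(k)})=(\sum_{j\le k}\alpha_j/\alpha_1)\,A_{v_1}(\Gamma)$, inducting on $k$.  Your route bypasses all of this graph construction by exploiting directly that the Laplacian has zero row sums, and is the more elegant argument.

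That said, several details in your write-up are reversed and would not survive as stated.  Right-multiplication by $P^{-1}$ does \emph{not} ``only modify column $i$'': since $P^{-1}$ differs from the identity only in row $i$, the effect on $LP^{-1}$ is that column $i$ is scaled by $1/\alpha_i$ while every other column $s$ becomes $C_s-\tfrac{\alpha_s}{\alpha_i}C_i$ (here $C_j$ denotes column $j$ of $L$).  Likewise, you cannot ``use multilinearity in the $i$-th row'' of $(PLP^{-1})_i^i$, since that row has been deleted; the expansion must be in the columns.  Finally, $L\mathbf 1=0$ (zero row sums) is the statement that the \emph{columns} sum to zero, $\sum_j C_j=0$, which is what you actually need.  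With these corrections your argument runs as follows: after deleting row and column $i$, the remaining $(n-1)\times(n-1)$ block has columns $C_s-\tfrac{\alpha_s}{\alpha_i}C_i$ for $s\ne i$ (restricted to rows $\ne i$); multilinearity in columns gives $\det[L_i^i]+\sum_{s\ne i}(-\tfrac{\alpha_s}{\alpha_i})\det[\,\cdots,C_i,\cdots\,]$ plus vanishing higher terms; substituting $C_i=-\sum_{j\ne i}C_j$ into the $s$-th such determinant collapses it to $-\det[L_i^i]$; and summing yields $\bigl(\sum_j\alpha_j/\alpha_i\bigr)\det[L_i^i]=\bigl(\sum_j\alpha_j/\alpha_i\bigr)A_{v_i}(\Gamma)$ by Theorem~\ref{MTT2}.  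No forest interpretation or all-minors version is needed.
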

\begin{proof}
First note that we need $\alpha_i\neq0$ in order for $P$ to be invertible.  Otherwise, column $i$ would be all 0's.  If $\alpha_j=0$ for all $j\neq i$, then the statement holds trivially, because $(PLP^{-1})_{i}^i=L_{i}^i$ in this case.  So, we can assume there is some $j\neq i$ such that $\alpha_j\neq 0$.

Without loss of generality, let $i=1$ and $\alpha_2\neq0$.  We can see that $P^{-1}$ is the identity matrix with $\frac{1}{\alpha_1}$ in the (1,1) entry and $\frac{-\alpha_i}{\alpha_1}$ in the $(1,i)$ entry.
$L(\tilde{\Gamma})P^{-1}$ differs from $L(\tilde{\Gamma})$ in that the $i$th column of $L(\tilde{\Gamma})P^{-1}$ is the $i$th column of $L(\tilde{\Gamma})$ with $\frac{\alpha_i}{\alpha_1}$ times the first column of $L(\tilde{\Gamma})$ subtracted from it. $PL(\tilde{\Gamma})P^{-1}$ differs from $L(\tilde{\Gamma})P^{-1}$ only in the first row. However, since we are finding the determinant of $PL(\tilde{\Gamma})P^{-1}$ with the first row and column removed, we are only interested in the lower-right hand $(n-1) \times (n - 1)$ submatrix and can ignore this operation.

Notice that $P$ can be factored as $P_nP_{n-1}...P_2$ where $P_2$ is the identity but with $\alpha_1$ in the $(1,1)$ entry and $\alpha_2$ in the $(1,2)$ entry and the rest of the $P_j$'s are the identity but with $\alpha_j$ in the $(1,j)$ entry.  This also gives a factorization for $P^{-1}$.  We will first focus on $P_2LP_2^{-1}$.

We may interpret $(P_2LP_2^{-1})_1^1$ as a submatrix of the Laplacian of a different graph, which we will denote as $\Gamma^{(2)}$.  We construct $\Gamma^{(2)}$ as follows: the vertices of $\Gamma^{(2)}$ are $v_1^{(2)},...,v_{n}^{(2)}$.  If there is an edge $v_r\to v_s$ in $\Gamma$, then there is an edge $v_r^{(2)}\to v_s^{(2)}$ in $\Gamma^{(2)}$, so $\Gamma^{(2)}$ contains $\Gamma$ as a subgraph. For each edge  $e=(v_2,v_1) \in \Gamma$, we add an additional edge $(v_2^{(2)},v_1^{(2)})$ to $\Gamma^{(2)}$ with weight $\frac{\alpha_2}{\alpha_1}\wt(e)$;  we call this an edge of type 1.  Furthermore, for each such $e=(v_i,v_1)\in\Gamma$ where $i\neq 1,2$, we add the edge $(v_i^{(2)}, v_2^{(2)})$ to $\Gamma^{(2)}$ with weight $\frac{-\alpha_2}{\alpha_1}\wt(e)$ and the edge $(v_i^{(2)}, v_1^{(2)})$ with weight $\frac{\alpha_2}{\alpha_1}\wt(e)$.  The first of these edges will be called an edge of type 2 and the second an edge of type 3.

We can see that $L(\Gamma^{(2)})$ is the same as $L(\Gamma)$ except that (aside from the first row, which remains unchanged) $\frac{\alpha_2}{\alpha_1}$ times the first column is subtracted from first column and added to the second column.  $L_{1}^1(\Gamma^{(2)}) = (P_2LP_2^{-1})_1^1$, so $\det [(P_2LP_2^{-1})_1^1]$ counts the arborescences of $\Gamma^{(2)}$ rooted at $v_1^{(2)}$.

We will divide the arborescences of $\Gamma^{(2)}$ into four categories (See Figure~\ref{fig: arb-change-basis}).
\begin{enumerate}[nosep]
\item Arborescences that do not contain any type 1, type 2, or type 3 edges.  The weighted sum of these arborescences is counted by $A_{v_1}(\Gamma)$ because these are exactly the arborescences that use only edges in the subgraph $\Gamma$ of $\Gamma^{(2)}$.
\item Arborescences that contain a type 2 edge paired with arborescences that differ from these by replacing the type 2 edge with a type 3 edge of the same weight with opposite sign.  For every type 2 edge, there is a type 3 edge of the same weight with opposite sign.  This means that for every arborescence that contains a type 2 edge, there is an arborescence that is the same, except instead of the type 2 edge it has a type 3 edge of the same weight with opposite sign.  The weights of these arborescences cancel out, so the weighted sum of all of these arborescences is 0. 
\item Arborescences that contain a type 1 edge not counted in the previous category (2). We claim that in such an arborescence, every edge lies in the subgraph $\Gamma$ except for the unique type $1$ edge. If such an arborescence contained an edge of type $2$, then since vertex $2^{(2)}$ flows directly to the root the edge of type $2$ could be replaced by its corresponding type $3$ edge and still yield a valid arborescence; the same holds if we start with an edge of type $3$. Thus,  arborescences in this category correspond to arborescences in $\Gamma$ where the edge out of 2 goes directly to 1.  So, they contribute $\frac{\alpha_2}{\alpha_1}$ times the weight of such arborescences in $\Gamma$. 
\item Arborescences that contain an edge of type 3 that are not counted in category (2) either.  These are arborescences where removing the edge $e = (v_j^{(2)}, v_1^{(2)})$ of type 3 and replacing it with the corresponding edge $e'=(v_j^{(2)},v_2^{(2)})$ of type 2 does not give an arborescence. This only happens if adding $e'$ would create a cycle, so we conclude that $v_{j}^{(2)}$ lies downstream from $v_2^{(2)}$ in the arborescence flow. This guarantees that there exists only one type $3$ edge---if we have one type $3$ edge out of $v_j^{(2)}$ and another one out of $v_{j'}^{(2)}$, then both of these vertices lie downstream of $v_2^{(2)}$ but both lead directly to $v_1^{(2)}$, which is a contradiction. Furthermore, such an arborescence cannot contain an edge of type $1$---this would immediately contradict vertex $v_{2}^{(2)}$ lying upstream of $v_{j}^{(2)}$---or an edge of type $2$, since type $2$ edges can always be replaced by their corresponding type $3$ edge and still yield a valid arborescence, which would again land us in category (2). 

We conclude that the only ``added" edge in this arborescence is $e$ itself. Therefore, summing over $j\neq1,2$ we see that these arborescences in this category correspond bijectively to arborescences in $\Gamma$ where the edge out of $v_2$ does not go to $v_1$.  This means that in our sum, they contribute $\frac{\alpha_2}{\alpha_1}$ times the weight of such arborescences in $\Gamma$.
\end{enumerate}

The last two categories combine to contribute $\frac{\alpha_2}{\alpha_1} A_{v_1}(\Gamma)$ to the arborescence count $A_{v_1^{(2)}}\Gamma^{(2)}$. Adding the weighted sums of the arborescences in these four categories, we find $$A_{v_1^{(2)}}(\Gamma^{(2)}) = A_{v_1}(\Gamma) + \frac{\alpha_2}{\alpha_1}A_{v_1}(\Gamma)=\left(\frac{\alpha_1+\alpha_2}{\alpha_1}\right)A_{v_1}(\Gamma).$$

From here, we proceed by induction---essentially, all we need to do is prove that continuing to iterate the previous construction over vertices other than $v_2$ continues to work the way we want.  For $3\leq k\leq n$, we will construct $\Gamma^{(k)}$ from $\Gamma^{(k-1)}$ in the same way we constructed $\Gamma^{(2)}$ from $\Gamma$.  However, here the weights on our new edges will have a factor of $\frac{\alpha_k}{\sum_{j=1}^{k-1}\alpha_j}$ rather than $\frac{\alpha_2}{\alpha_1}$.  We will show that $L(\Gamma^{(k)})$ is $L(\Gamma)$ except that (aside from the first row, which remains unchanged) $\frac{\alpha_j}{\alpha_1}$ times the first column of $L(\Gamma)$ is subtracted from first column and added to the $j$th column for $2\leq j\leq k$.  Note that this means that $L_{1}^1(\Gamma^{(k)}) = (P_kP_{k-1}...P_2LP_2^{-1}...P_{k-1}^{-1}P_k^{-1})_1^1$, so $\det [(P_kP_{k-1}...P_2LP_2^{-1}...P_{k-1}^{-1}P_k^{-1})_1^1]$ counts the arborescences of $\Gamma^{(k)}$ rooted at $v_1^{(k)}$.  We will also show that $$A_{v_1^{(k)}}(\Gamma^{(k)})=\left(\sum_{j=1}^{k}\frac{\alpha_j}{\alpha_1}\right)A_{v_1}(\Gamma).$$

We begin with the Laplacian.  We can see that $L(\Gamma^{(k)})$ is $L(\Gamma^{(k-1)})$ except that (aside from the first row, which remains unchanged) $\frac{\alpha_k}{\sum_{j=1}^{k-1}\alpha_j}$ times the first column of $L(\Gamma^{(k-1)})$ is subtracted from first column and added to the $k$th column.  By our inductive hypothesis, $\frac{\alpha_k}{\sum_{j=1}^{k-1}\alpha_j}$ times the first column of $L(\Gamma^{(k-1)})$ is $\left(\frac{\alpha_k}{\sum_{j=1}^{k-1}\alpha_j}\right)\left( \frac{\sum_{j=1}^{k-1}\alpha_j}{\alpha_1}\right)=\frac{\alpha_k}{\alpha_1}$ times the first column of $L(\Gamma)$.  This shows that $L(\Gamma^{(k)})$ is what we want.

Now we turn to the arborescences.  The same method of counting arborescences in $\Gamma^{(2)}$ from arborescences in $\Gamma$ applies for counting arborescences in $\Gamma^{(k)}$ from $\Gamma^{(k-1)}$.  This means $$A_{v_1^{(k)}}(\Gamma^{(k)}) = A_{v_1^{(k-1)}}(\Gamma^{(k-1)}) + \frac{\alpha_k}{\sum_{j=1}^{k-1}\alpha_j}A_{v_1^{(k-1)}}(\Gamma^{(k-1)}),$$ which gives us what we want by the inductive hypothesis.

Thus, we have shown that $$\det [(PLP^{-1})_1^1]=\det [(P_nP_{n-1}...P_2LP_2^{-1}...P_{n-1}^{-1}P_n^{-1})_1^1]=\left(\sum_{j=1}^{n}\frac{\alpha_j}{\alpha_1}\right)A_{v_1}(\Gamma).$$

\begin{figure}
    \centering
        $$
    L(\Gamma) = \begin{bmatrix}
        a + b & - a & 0 & -b\\
        -c & c + d & 0 & -d\\
        -e & -g & e + g & 0\\
        0 & 0 & -f & f
    \end{bmatrix}, \;\;\;\;\;\; P_2 = \begin{bmatrix}
        1 & 1 & 0 & 0 \\
        0 & 1 & 0 & 0\\
        0& 0 & 1 & 0\\
        0 & 0 & 0 & 1
    \end{bmatrix}$$ 
    
 $$   P_2L(\Gamma) P_2^{-1} = \begin{bmatrix}
        a + b - c & -2a - b + 2c + d & 0 & -b - d\\
        -c & 2c + d & 0 & -d\\
        -e & -g + e & g + e & 0\\
        0 & 0 & -f & f
    \end{bmatrix}
$$
    \caption{A Laplacian matrix before and after applying the change of basis $P_2$ with $\alpha_1 = \alpha_2 = 1$. Note that $(P_2LP_2^{-1})_1^1$ matches the corresponding submatrix of the Laplacian of $\Gamma^{(2)}$ (see Figure \ref{fig: arb-change-basis}).}
    \label{fig: mat-change-basis}
\end{figure}
\begin{figure}
\centering

\subfigure{
\begin{tikzpicture}[scale = 0.9, line width=1.2]
\node () at (-0.25,-0.25) {$3$};
\node () at (3.25,-0.25) {$4$};
\node () at (3.25,3.25) {$2$};
\node () at (-0.25,3.25) {$1$};
\node () at (7-0.25,-0.25) {$3$};
\node () at (7+3.25,-0.25) {$4$};
\node () at (7+3.25,3.25) {$2$};
\node () at (7-0.25,3.25) {$1$};
\draw [line width=0.25mm, fill=black] (0,0) circle (1mm);
\draw [line width=0.25mm, fill=black] (0,3) circle (1mm);
\draw [line width=0.25mm, fill=black] (3,0) circle (1mm);
\draw [line width=0.25mm, fill=black] (3,3) circle (1mm);
\path[draw = black, postaction = {on each segment = {mid arrow = red}}]
(0,3) to node[above] {$a$} (3,3)
(0,3) to node[above] {$b$} (3,0)
(3,3) to [bend right] node[above] {$c$} (0,3)
(3,3) to node[right] {$d$} (3,0)
(0,0) to node[left] {$e$} (0,3)
(3,0) to node[below] {$f$} (0,0)
(0,0) to [bend right] node[below] {$g$} (3,3);
\path[draw=black, ->] (4,1.5) to (6,1.5);
\draw [line width=0.25mm, fill=black] (7,0) circle (1mm);
\draw [line width=0.25mm, fill=black] (7,3) circle (1mm);
\draw [line width=0.25mm, fill=black] (10,0) circle (1mm);
\draw [line width=0.25mm, fill=black] (10,3) circle (1mm);
\path[draw = black, postaction = {on each segment = {mid arrow = red}}]
(7,3) to node[above] {$a$} (10,3)
(7,3) to node[above] {$b$} (10,0)
(10,3) to [bend right=70] node[above] {$c$} (7,3)
(10,3) to [bend right] node[above] {$c$} (7,3)
(10,3) to node[right] {$d$} (10,0)
(7,0) to node[left] {$e$} (7,3)
(7,0) to[bend left] node[left] {$e$} (7,3)
(10,0) to node[below] {$f$} (7,0)
(7,0) to [bend right] node[below] {$g$} (10,3)
(7,0) to [bend left] node[below=6] {$-e$} (10,3);
\node at (5,-1) {A graph $\Gamma$ and the corresponding graph $\Gamma^{(2)}$.
};
\end{tikzpicture}
}

\subfigure{
\begin{tikzpicture}[scale = 0.9, line width=1.2]
\node () at (7-0.25,-0.25) {$1$};
\node () at (7+3.25,-0.25) {$2$};
\node () at (7+3.25,3.25) {$4$};
\node () at (7-0.25,3.25) {$3$};
\draw [line width=0.25mm, fill=black] (7,0) circle (1mm);
\draw [line width=0.25mm, fill=black] (7,3) circle (1mm);
\draw [line width=0.25mm, fill=black] (10,0) circle (1mm);
\draw [line width=0.25mm, fill=black] (10,3) circle (1mm);
\path[draw = black, postaction = {on each segment = {mid arrow = red}}]
(10,3) to node[right] {$d$} (10,0)
(7,0) to node[left] {$e$} (7,3)
(10,0) to node[below] {$f$} (7,0);
\node at (8.5,-1) {An arborescence of type 1.};
\end{tikzpicture}
}

\subfigure{
\begin{tikzpicture}[scale = 0.9, line width=1.2]
\node () at (1-0.25,-0.25) {$3$};
\node () at (1+3.25,-0.25) {$4$};
\node () at (1+3.25,3.25) {$2$};
\node () at (1-0.25,3.25) {$1$};
\node () at (7-0.25,-0.25) {$3$};
\node () at (7+3.25,-0.25) {$4$};
\node () at (7+3.25,3.25) {$2$};
\node () at (7-0.25,3.25) {$1$};
\draw [line width=0.25mm, fill=black] (1,0) circle (1mm);
\draw [line width=0.25mm, fill=black] (1,3) circle (1mm);
\draw [line width=0.25mm, fill=black] (4,0) circle (1mm);
\draw [line width=0.25mm, fill=black] (4,3) circle (1mm);
\path[draw = black, postaction = {on each segment = {mid arrow = red}}]
(4,3) to [bend right] node[above] {$c$} (1,3)
(1,0) to [bend left] node[left] {$e$} (1,3)
(4,0) to node[below] {$f$} (1,0);
\draw [line width=0.25mm, fill=black] (7,0) circle (1mm);
\draw [line width=0.25mm, fill=black] (7,3) circle (1mm);
\draw [line width=0.25mm, fill=black] (10,0) circle (1mm);
\draw [line width=0.25mm, fill=black] (10,3) circle (1mm);
\path[draw = black, postaction = {on each segment = {mid arrow = red}}]
(4+6,3) to [bend right] node[above] {$c$} (1+6,3)
(1+6,0) to [bend left] node[left] {$-e$} (4+6,3)
(4+6,0) to node[below] {$f$} (1+6,0);
\node at (5.5,-1) {A pair of arborescences of type 2.};
\end{tikzpicture}
}

\subfigure{
\begin{tikzpicture}[scale = 0.9, line width=1.2]

\node () at (7-0.25,-0.25) {$3$};
\node () at (7+3.25,-0.25) {$4$};
\node () at (7+3.25,3.25) {$2$};
\node () at (7-0.25,3.25) {$1$};
\draw [line width=0.25mm, fill=black] (7,0) circle (1mm);
\draw [line width=0.25mm, fill=black] (7,3) circle (1mm);
\draw [line width=0.25mm, fill=black] (10,0) circle (1mm);
\draw [line width=0.25mm, fill=black] (10,3) circle (1mm);
\path[draw = black, postaction = {on each segment = {mid arrow = red}}]
(10,3) to [bend right = 70] node[above] {$c$} (7,3)
(7,0) to node[left] {$e$} (7,3)
(10,0) to node[below] {$f$} (7,0);
\node at (8.5,-1) {An arborescence of type 3.};
\end{tikzpicture}
}
\qquad \qquad
\subfigure{
\begin{tikzpicture}[scale = 0.9, line width=1.2]
\node () at (7-0.25,-0.25) {$3$};
\node () at (7+3.25,-0.25) {$4$};
\node () at (7+3.25,3.25) {$2$};
\node () at (7-0.25,3.25) {$1$};
\draw [line width=0.25mm, fill=black] (7,0) circle (1mm);
\draw [line width=0.25mm, fill=black] (7,3) circle (1mm);
\draw [line width=0.25mm, fill=black] (10,0) circle (1mm);
\draw [line width=0.25mm, fill=black] (10,3) circle (1mm);
\path[draw = black, postaction = {on each segment = {mid arrow = red}}]
(7,0) to [bend left] node[right] {$e$} (7,3)
(10,3) to node[right] {$d$} (10,0)
(10,0) to node[below] {$f$} (7,0);
\node at (8.5,-1) {An arborescence of type 4.};
\end{tikzpicture}
}
\caption{Types of arborescences for $\Gamma^{(2)}$ with $\alpha_1 = \alpha_2 = 1$.}
\label{fig: arb-change-basis}

\end{figure}
\end{proof}

\newpage

Here is the next lemma we need:

\begin{lemma}\label{lem2}
Let $R$ be a commutative ring and let $M \in \mathrm{Mat}_{n}(R)$. Let $Q \in GL_n(R)$ such that the $i$-th row and column are each the $i$-th unit vector. 
Then
\begin{align*}
   \det[(QMQ^{-1})_i^i] = \det[M_i^i]
\end{align*}
In other words, the minor of $M$ corresponding to removing the $i$-th row and column is invariant under base change by $Q$. 
\end{lemma}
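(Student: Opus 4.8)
The plan is to reduce the statement to a single block computation. First I would argue that it suffices to treat the case $i = n$. Conjugating $M$ (and hence also $Q$, $Q^{-1}$, and $QMQ^{-1}$) by the permutation matrix $\Pi$ realizing the transposition $(i\ n)$ reindexes rows and columns by this transposition; deleting the last row and column of the conjugated matrices amounts to deleting the $i$-th row and column of the originals and then permuting the surviving $n-1$ indices by a fixed bijection, which is a simultaneous row-and-column permutation of the relevant minor and so leaves its determinant unchanged. Moreover $\Pi Q \Pi$ still has its $n$-th row and column equal to the $n$-th unit vector, since $Q$ had its $i$-th row and column equal to the $i$-th unit vector. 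So we may assume $i = n$.

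Next I would write everything in block form. Since the last row and the last column of $Q$ are the unit vector $e_n$, we have
\begin{align*}
    Q = \begin{pmatrix} Q' & 0 \\ 0 & 1 \end{pmatrix}, \qquad Q^{-1} = \begin{pmatrix} (Q')^{-1} & 0 \\ 0 & 1 \end{pmatrix},
\end{align*}
with $Q' \in \mathrm{Mat}_{n-1}(R)$. Here $Q'$ is genuinely invertible over $R$, because $\det Q = \det Q'$ and $\det Q$ is a unit of $R$ as $Q \in GL_n(R)$. Writing
\begin{align*}
    M = \begin{pmatrix} M_n^n & c \\ r & m_{nn} \end{pmatrix}
\end{align*}
with $c$ a column, $r$ a row, and $m_{nn} \in R$, block multiplication gives
\begin{align*}
    Q M Q^{-1} = \begin{pmatrix} Q' M_n^n (Q')^{-1} & Q' c \\ r (Q')^{-1} & m_{nn} \end{pmatrix},
\end{align*}
so that $(QMQ^{-1})_n^n = Q' M_n^n (Q')^{-1}$.

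Finally, since $R$ is commutative the determinant is multiplicative, and therefore
\begin{align*}
    \det[(QMQ^{-1})_n^n] = \det(Q')\,\det(M_n^n)\,\det(Q')^{-1} = \det[M_n^n],
\end{align*}
as claimed. I do not expect any serious obstacle in this argument: the only points that merit a line of care are the bookkeeping in the reduction to $i=n$, the fact that $Q'$ is actually invertible over $R$, and the multiplicativity of the determinant over an arbitrary commutative ring — all routine. (This lemma will then be combined with Lemma~\ref{lem1} by factoring the change-of-basis matrix $S$ as $S = QP$.)
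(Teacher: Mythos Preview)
Your proposal is correct and follows essentially the same approach as the paper: both reduce to a fixed index (the paper takes $i=1$, you take $i=n$), write $Q$ and $M$ in block form with the distinguished row/column split off, observe that the relevant principal submatrix of $QMQ^{-1}$ is a conjugate of $M_i^i$, and conclude by multiplicativity of the determinant. Your write-up is slightly more careful in justifying the reduction step and the invertibility of the smaller block, but the argument is the same.
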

\begin{proof}Without loss of generality $i = 1$. Write 
\begin{align*}
    Q = \begin{pmatrix}
        1 & 0\\
        0 & Q_{1}^1
    \end{pmatrix}, \;\;\;\;M = \begin{pmatrix}
        * & *\\
        * & M_1^1
    \end{pmatrix}
\end{align*}
Thus,
\begin{align*}
    QMQ^{-1} = \begin{pmatrix}
    * & *\\
    * & (Q_1^1)(M_1^1)(Q_1^1)^{-1}
    \end{pmatrix}.
\end{align*}
Then since
\begin{align*}
    \det[(QMQ^{-1})_i^i] &= \det[((Q_1^1)(M_1^1)(Q_1^1)^{-1})] \\
    &= \det[(M_1^1)]\\
    &=: \det[M_1^1],
\end{align*} we conclude that the desired minor is the same as the corresponding minor of $M$.
\end{proof}

\subsection{Proof of Theorem \ref{bigboi}}\label{sec:proof}
\begin{proof}
  Let $P$ be the change of basis that maps $v_{1}^{1} \mapsto \beta_{1} := \sum_{s \in [k]} v_{1}^{s}$, and let $Q$ be the change of basis that maps $v_{i}^{1} \mapsto \sum_{r \in [k]} v_i^r$ for $i > 1$.
  Note that $P$ satisfies the hypotheses for  Lemma \ref{lem1} with $i=1$ and $Q$ satisfies the hypotheses of Lemma \ref{lem2} with $i=1$.
  Letting $S$ be the matrix from Lemma \ref{trilem}, we have $S = QP$. Thus, by Lemmas \ref{lem1} and \ref{lem2},
    \begin{align*}
        \det [U_1^1] &= \det [(QPL(\tilde{\Gamma})P^{-1}Q^{-1})_1^1]\\
        &= \det [(PL(\tilde{\Gamma})P^{-1})_1^1]\\
        &= k A_{v_{1}^1}(\tilde{\Gamma})
    \end{align*}
    However, from Corollary \ref{tricorollary} we know that
    \begin{align*}
        \det [U_1^1] &= A_{v_1}(\Gamma) \det [\mathscr{L}(\Gamma)]_{\BZ[E]}.
    \end{align*}
    Therefore,
    \begin{align*}
        k A_{v_1^1}(\tilde{\Gamma}) = A_{v_1}(\Gamma) \det [\mathscr{L}(\Gamma)]_{\BZ[E]}
    \end{align*}
    as desired. 
\end{proof}

\section{Vector fields and the voltage Laplacian}\label{sec: vec-fields}

In this section, we discuss the connection between the voltage Laplacian and vector fields on voltage graphs, and its implications for positivity in the $2$-fold cover case.

\subsection{Negative Vector Fields}

\begin{definition}
A \emph{vector field} $\gamma$ of a directed graph $\Gamma$ is a subgraph of $\Gamma$ such that every vertex of $\gamma$ has outdegree $1$ in $\Gamma$. As with arborescences, we define the \emph{weight} of a vector field to be the product of its edge weights, that is $$\text{wt}(\gamma) := \prod_{e \in \gamma} \wt(e).$$  Note that $\wt(\gamma)$ is a degree $n$ monomial with respect to the edge weights of $\Gamma$. Write $C(\gamma)$ for the set of cycles in a vector field $\gamma$, of which there is exactly one in each connected component. If $G$ is abelian, and if $c$ is a cycle of $\gamma$ then we define the voltage of $c$ as $\nu(c) := \prod_{e \in c} \nu(e)$; this product is well-defined when $G$ is abelian.
\end{definition}

The determinant of $\mathscr{L}(\Gamma)$ counts vector fields of $\Gamma$ in the following way:
\begin{theorem}[Chaiken] \label{NVF}
	Let $G$ be an abelian group, and let $\Gamma$ be an edge-weighted $G$-voltage graph. Then
	\begin{align*}
		\sum_{\gamma \subseteq \Gamma} \left[\wt(\gamma) \prod_{c \in C(\gamma)} (1 - \nu(c)) \right] = \det [\mathscr{L}(\Gamma)]
	\end{align*}
	where the sum ranges over all vector fields $\gamma$ of $\Gamma$.
\end{theorem}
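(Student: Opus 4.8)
The plan is to expand $\det[\mathscr{L}(\Gamma)]$ using the permutation (Leibniz) expansion and match terms with vector fields. Recall that $\mathscr{L}(\Gamma) = D(\Gamma) - \mathscr{A}(\Gamma)$, where the $(i,j)$ entry is $\sum_{e=(v_i,v_j)}\nu(e)\wt(e)$ for $i\neq j$ and $d_{ii}-\sum_{e=(v_i,v_i)}\nu(e)\wt(e)$ on the diagonal, with $d_{ii}=\sum_{e\in E_s(v_i)}\wt(e)$. The key bookkeeping device is that each diagonal entry $\ell_{ii}$ is itself a sum over the outgoing edges of $v_i$: writing $\ell_{ii} = \sum_{e\in E_s(v_i)}\wt(e) - \sum_{e=(v_i,v_i)}\nu(e)\wt(e)$, we see that $\ell_{ii}$ contains a $+\wt(e)$ contribution from every outgoing edge $e$ of $v_i$ (including loops), plus an extra $-\nu(e)\wt(e)$ correction for each loop. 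So in the Leibniz expansion $\det[\mathscr{L}(\Gamma)] = \sum_{\sigma\in S_n}\operatorname{sgn}(\sigma)\prod_i \ell_{i\sigma(i)}$, after fully distributing, every monomial is indexed by a choice, for each vertex $v_i$, of one outgoing edge $e_i$ (the diagonal case $\sigma(i)=i$ splitting into the "degree" choice of any outgoing edge and the loop-correction choice). This is exactly the data of a function assigning each vertex an outgoing edge — i.e. a vector field $\gamma$. The hard part will be tracking the signs and the voltage/correction factors so that the coefficient of $\wt(\gamma)$ collapses to $\prod_{c\in C(\gamma)}(1-\nu(c))$.

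First I would set up the expansion carefully: fix a vector field $\gamma$ (a functional graph $f\colon V\to V$ together with a choice of edge realizing each $f(v_i)$), and identify which terms of $\sum_\sigma \operatorname{sgn}(\sigma)\prod_i \ell_{i\sigma(i)}$ produce the monomial $\wt(\gamma)$. A functional graph decomposes into connected components, each a set of trees hanging off a unique cycle. For a tree edge $v_i\to v_j$ with $j\neq i$, the only source of $\wt(e)$ is the off-diagonal entry $\ell_{ij}$, carrying the factor $\nu(e)$ and forcing $\sigma(i)=j$. For the cyclic part of a component, say a cycle $c = (v_{i_1}\to v_{i_2}\to\cdots\to v_{i_m}\to v_{i_1})$ of length $m\geq 2$, there are exactly two ways to realize $\wt(c)=\prod\wt(e)$: either take the off-diagonal entries $\ell_{i_1 i_2},\ldots,\ell_{i_m i_1}$ (forcing $\sigma$ to act as the $m$-cycle on $\{i_1,\ldots,i_m\}$, giving $\operatorname{sgn}$ contribution $(-1)^{m-1}$ and voltage contribution $\prod\nu(e)=\nu(c)$), or take the "degree" part of each diagonal entry $\ell_{i_k i_k}$ (forcing $\sigma$ to fix each $i_k$, $\operatorname{sgn}$ contribution $+1$, voltage contribution $1$). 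For a loop $c$ at $v_i$, the two ways are: the $+\wt(e)$ degree part of $\ell_{ii}$, or the $-\nu(e)\wt(e)$ loop-correction part of $\ell_{ii}$ — again giving $1$ and $-\nu(c)$ respectively. So each cycle independently contributes a factor $(1-\nu(c))$ after combining the two options (the $(-1)^{m-1}$ sign on one branch being exactly cancelled against the sign of the $m$-cycle permutation, up to the overall sign which I track next), while tree edges contribute their $\nu(e)$ factors.

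Next I would argue that the residual voltage factors from the tree edges cancel out. Because $G$ is abelian and $\nu(c)=\prod_{e\in c}\nu(e)$ is well-defined, I claim that for a genuine vector field $\gamma$ the product of tree-edge voltages appearing in the surviving monomial is in fact trivial — more precisely, I should show the claimed identity holds by reorganizing: one clean way is to prove the theorem first for voltages all trivial (where it reduces to the all-minors matrix-tree/vector-field statement, essentially Chaiken's original), and then observe that replacing $\nu$ by a nontrivial abelian voltage only changes off-diagonal entry $(i,j)$ by a multiplicative $\nu(e)$ and diagonal loop-corrections similarly; chasing this through the expansion shows the net effect on the $\wt(\gamma)$-coefficient is precisely to multiply each $1$ by $1$ and each $-1$ (on a cycle branch) by $-\nu(c)$, i.e. $(1-1)\mapsto(1-\nu(c))$ per cycle, with all tree-edge $\nu$'s entering with multiplicity matching between the two branches of each cycle in its component and hence cancelling within the $(1-\nu(c))$ grouping. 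Alternatively, and perhaps cleanest, since the paper promises a deletion–contraction proof, I would instead induct on the number of edges: pick an edge $e$; the contribution of vector fields using $e$ versus not using $e$ matches the deletion–contraction recursion satisfied by the right-hand side $\det[\mathscr{L}(\Gamma)]$ (expanding the relevant entry of the Laplacian as a sum over edges), with the loop case and the $(1-\nu(c))$ factor handled by the base case of a single cycle. I expect the sign bookkeeping in the Leibniz approach, or equivalently the correct formulation of the contraction step (what happens to voltages under contraction, and the treatment of resulting loops), to be the main obstacle; everything else is routine.
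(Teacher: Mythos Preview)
Your overall strategy—expand $\det[\mathscr{L}(\Gamma)]$ by the Leibniz formula, distribute, and group the resulting monomials by vector field—is exactly the paper's second proof (Chaiken's original, reorganized there as a sum over triples $(S,\pi,f)$ with $S\subseteq[n]$, $\pi$ a permutation of $S$, and $f\colon[n]\setminus S\to[n]$). The deletion--contraction alternative you float at the end is the paper's first proof. So both routes you sketch are represented.

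There is, however, a concrete error in your bookkeeping that is the source of all the trouble in your third paragraph. You claim that for a tree edge $e_i=(v_i,v_j)$ with $j\neq i$, the only source of $\wt(e_i)$ is the off-diagonal entry $\ell_{ij}$, forcing $\sigma(i)=j$ and picking up a factor $\nu(e_i)$. This is backwards. The weight $\wt(e_i)$ also sits in the degree part of the diagonal entry $\ell_{ii}=d_{ii}-a_{ii}$, and in fact for a tree vertex it can \emph{only} come from there: if you set $\sigma(i)=f(i)$ on some set $S$ and $\sigma(i)=i$ off $S$, then for $\sigma$ to be a permutation you need $f|_S\colon S\to S$ to be a bijection, which forces $S$ to be a union of complete cycles of $\gamma$. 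Tree vertices are never on a cycle, so they are never in $S$; they are forced into the diagonal/degree branch and contribute the factor $+1$ with no voltage. Your cycle analysis (each cycle independently yields $1$ from the diagonal branch and $-\nu(c)$ from the off-diagonal branch, after combining the permutation sign $(-1)^{m-1}$ with the $(-1)^m$ from the minus signs in $-a_{ij}$) is correct.

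Once you fix the tree-edge claim, the ``residual tree-edge voltage'' you try to cancel simply is not there, and the proof is complete without any appeal to the trivial-voltage case or to abelianness beyond the well-definedness of $\nu(c)$. The paper's version makes this transparent by writing a generic term as $(S,\pi,f)$ and then swapping cycles between $\pi$ and $f$; your version is the same computation once the permutation constraint on $S$ is enforced.
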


\begin{example}\label{NVFexample}
    Let $\Gamma$ be the $\BZ/3\BZ$-voltage graph of example \ref{ex1}. There are four distinct vector fields of $\Gamma$ (see Figure \ref{4nvf}).
    \begin{figure}[h]
    \centering
  \subfigure{
  \begin{tikzpicture}
   [line width=1.2, scale = 0.75]
    \coordinate (1) at (0, 3);
    
    \coordinate (2) at (3/1.71, 0);
    \coordinate (3) at (-3/1.71, 0);
    
    \coordinate (4) at (0, 3 + 0.4);
    \path [draw = black, postaction = {on each segment = {mid arrow = red}}]
    
    (1)--(2)
    
    (3)--(1)

    (2) -- (3);
    
    \draw[fill] (1) circle [radius=0.1];
    \node at (0.5, 3) {$1$};
    \draw[fill] (2) circle [radius=0.1];
    \node at (3/1.71 + 0.5, 0) {2};
    \draw[fill] (3) circle [radius=0.1];
    \node at (-3/1.71-0.5, 0) {3};

    \node at (3/1.71/2 + 0.7, 3/2) {$(b, 1)$};
    \node at (-3/1.71/2 - 0.7, 3/2) {$(d, g^2)$};
    \node at (0, -0.85) {$(c, g^2)$};
    \end{tikzpicture}
}
  \subfigure{
  \begin{tikzpicture}
   [line width=1.2, scale = 0.75]
    \coordinate (1) at (0, 3);
    
    \coordinate (2) at (3/1.71, 0);
    \coordinate (3) at (-3/1.71, 0);
    
    \coordinate (4) at (0, 3 + 0.4);
    \draw (1) arc(270:360+270:0.4);
    \path [draw = black, postaction = {on each segment = {mid arrow = red}}]
    
    
    (3)--(1)
    

    (2) -- (3);
    
    \draw[fill] (1) circle [radius=0.1];
    \node at (0.5, 3) {$1$};
    \draw[fill] (2) circle [radius=0.1];
    \node at (3/1.71 + 0.5, 0) {2};
    \draw[fill] (3) circle [radius=0.1];
    \node at (-3/1.71-0.5, 0) {3};

    \node at (1, 3 + 0.4) {$(a, g)$};
    \node at (-3/1.71/2 - 0.7, 3/2) {$(d, g^2)$};
    \node at (0, -0.85) {$(c, g^2)$};
    \end{tikzpicture}
}
  \subfigure{
  \begin{tikzpicture}
   [line width=1.2, scale = 0.75]
    \coordinate (1) at (0, 3);
    
    \coordinate (2) at (3/1.71, 0);
    \coordinate (3) at (-3/1.71, 0);
    
    \coordinate (4) at (0, 3 + 0.4);
    \path [draw = black, postaction = {on each segment = {mid arrow = red}}]
    
    (1)--(2)
    
    

    (2) to [bend left] (3)
    (3) to [bend left] (2);
    
    \draw[fill] (1) circle [radius=0.1];
    \node at (0.5, 3) {$1$};
    \draw[fill] (2) circle [radius=0.1];
    \node at (3/1.71 + 0.5, 0) {2};
    \draw[fill] (3) circle [radius=0.1];
    \node at (-3/1.71-0.5, 0) {3};

    \node at (3/1.71/2 + 0.7, 3/2) {$(b, 1)$};
    \node at (0, 0.85) {$(e, 1)$};
    \node at (0, -0.85) {$(c, g^2)$};
    \end{tikzpicture}
}
  \subfigure{
  \begin{tikzpicture}
   [line width=1.2, scale = 0.75]
    \coordinate (1) at (0, 3);
    
    \coordinate (2) at (3/1.71, 0);
    \coordinate (3) at (-3/1.71, 0);
    
    \coordinate (4) at (0, 3 + 0.4);
    \draw (1) arc(270:360+270:0.4);
    \path [draw = black, postaction = {on each segment = {mid arrow = red}}]
    
    
    

    (2) to [bend left] (3)
    (3) to [bend left] (2);
    
    \draw[fill] (1) circle [radius=0.1];
    \node at (0.5, 3) {$1$};
    \draw[fill] (2) circle [radius=0.1];
    \node at (3/1.71 + 0.5, 0) {2};
    \draw[fill] (3) circle [radius=0.1];
    \node at (-3/1.71-0.5, 0) {3};
    
    \node at (1, 3 + 0.4) {$(a, g)$};
    \node at (0, 0.85) {$(e, 1)$};
    \node at (0, -0.85) {$(c, g^2)$};
    \end{tikzpicture}
}
\caption{The four vector fields of $\Gamma$}
\label{4nvf}
\end{figure}
    
    The first three of these vector fields contain one cycle; from left to right, these unique cycles have weights $\zeta_3, \zeta_3,$ and $\zeta_3^2$. The rightmost vector field has two cycles, one with weight $\zeta$ and the other of weight $\zeta^2$. From Example \ref{voltlapex}, we have
    \begin{align*}
        \det [\mathscr{L}(\Gamma)] = (1-\zeta_3) bcd + (1 - \zeta_3)acd + (1-\zeta_3^2)bce + (1- \zeta_3)(1 - \zeta_3^2)ace
    \end{align*}
    The four terms in this expression correspond to the four vector fields of $\Gamma$ as described by the theorem.
\end{example}

We briefly point out the special case $G = \BZ/2\BZ$, which is especially nice because the coefficients in Theorem \ref{NVF} are nonnegative integers.

\begin{definition}
Suppose that $\Gamma$ is a $\BZ/2\BZ$-voltage graph, also called a \emph{signed graph}. A vector field $\gamma$ of $\Gamma$ is a \emph{negative vector field} if every cycle $c$ of $\gamma$ has an odd number of negative edges, so that $\nu(c) = -1$.
\end{definition}

Denote the set of negative vector fields of signed graph $\Gamma$ by $\mathcal{N}(\Gamma)$. Then  Theorem \ref{NVF} may be written as:

\begin{corollary}\label{NVFcor}
    \begin{align*}
        \sum_{\gamma \in \mathcal{N}(\Gamma)} 2^{\#C(\gamma)}\wt(\gamma)  = \det [\mathscr{L}(\Gamma)]
    \end{align*}
\end{corollary}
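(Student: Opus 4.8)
The plan is to obtain Corollary~\ref{NVFcor} as the specialization of Theorem~\ref{NVF} (Chaiken's theorem) to the group $G = \BZ/2\BZ$, so the only work is to see how the product $\prod_{c \in C(\gamma)}(1 - \nu(c))$ collapses in this case. First I would recall, as noted in the discussion of the reduced group algebra, that $\conj{\BZ[\BZ/2\BZ]} \cong \BZ$ with the non-identity element of $G$ sent to $-1$; thus for any cycle $c$ in a vector field $\gamma$ the voltage $\nu(c) = \prod_{e \in c}\nu(e)$ equals $+1$ if $c$ has an even number of negative edges and $-1$ if $c$ has an odd number of negative edges.

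Next I would apply Theorem~\ref{NVF}, which gives
\begin{align*}
    \det[\mathscr{L}(\Gamma)] = \sum_{\gamma \subseteq \Gamma}\left[\wt(\gamma)\prod_{c \in C(\gamma)}(1 - \nu(c))\right],
\end{align*}
and observe the key dichotomy: if some cycle $c \in C(\gamma)$ is positive, then $1 - \nu(c) = 1 - 1 = 0$, so the entire term for $\gamma$ vanishes; if every cycle $c \in C(\gamma)$ is negative, then $1 - \nu(c) = 1 - (-1) = 2$ for each of the $\#C(\gamma)$ cycles, so the term for $\gamma$ equals $2^{\#C(\gamma)}\wt(\gamma)$. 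Since a vector field is, by definition, a negative vector field exactly when all of its cycles are negative, the sum on the right-hand side is supported on $\mathcal{N}(\Gamma)$, and we get
\begin{align*}
    \det[\mathscr{L}(\Gamma)] = \sum_{\gamma \in \mathcal{N}(\Gamma)} 2^{\#C(\gamma)}\wt(\gamma),
\end{align*}
which is exactly the claimed identity.

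There is essentially no obstacle here beyond bookkeeping; the one point worth stating carefully is the well-definedness of $\nu(c)$, which holds because $\BZ/2\BZ$ is abelian (as already remarked in the definition of the voltage of a cycle), so that $\prod_{e \in c}\nu(e)$ does not depend on the starting point or orientation used to traverse $c$. Given that, the corollary is immediate from Theorem~\ref{NVF}, and indeed the example computation in Example~\ref{NVFexample} (with $\BZ/3\BZ$ replaced by $\BZ/2\BZ$) illustrates the same cancellation mechanism.
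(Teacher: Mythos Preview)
Your proposal is correct and follows exactly the approach the paper takes: the corollary is stated immediately after the definition of negative vector fields with the remark ``Then Theorem~\ref{NVF} may be written as,'' so the paper likewise treats it as the direct $G=\BZ/2\BZ$ specialization of Theorem~\ref{NVF}, using that $1-\nu(c)$ is $0$ on positive cycles and $2$ on negative cycles. There is nothing to add.
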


Corollary \ref{NVFcor} along with Corollary \ref{2foldtheorem} has an immediate further corollary:

\begin{corollary}\label{2coverpositivity}
If $\tilde{\Gamma}$ is a $2-$fold regular cover of $\Gamma$, then the ratio $\frac{A_{\tilde{v}}(\tilde{\Gamma})}{A_v(\Gamma)}$ has positive integer coefficients.
\end{corollary}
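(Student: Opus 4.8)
The statement is an immediate consequence of Corollary~\ref{2foldtheorem} together with Corollary~\ref{NVFcor}, so the plan is simply to chain these together and observe that the resulting quantities are manifestly nonnegative. First I would invoke Corollary~\ref{2foldtheorem}, which tells us that for a $\BZ/2\BZ$-voltage graph $\Gamma$ with derived (hence $2$-fold regular) cover $\tilde\Gamma$,
\[
    \frac{A_{\tilde{v}}(\tilde{\Gamma})}{A_v(\Gamma)} = \tfrac{1}{2}\det[\mathscr{L}(\Gamma)].
\]
Here I should note that by Theorem~\ref{regular} every $2$-fold regular cover of $\Gamma$ arises as the derived cover of some $\BZ/2\BZ$-voltage assignment on $\Gamma$ (the deck group of a connected $2$-fold cover is $\BZ/2\BZ$), so the hypothesis ``$\tilde\Gamma$ is a $2$-fold regular cover of $\Gamma$'' really does put us in the voltage-graph setting to which Corollary~\ref{2foldtheorem} applies.

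Next I would substitute the expression for $\det[\mathscr{L}(\Gamma)]$ supplied by Corollary~\ref{NVFcor}, namely
\[
    \det[\mathscr{L}(\Gamma)] = \sum_{\gamma \in \mathcal{N}(\Gamma)} 2^{\#C(\gamma)}\,\wt(\gamma),
\]
the sum being over negative vector fields $\gamma$ of the signed graph $\Gamma$. Dividing by $2$ gives
\[
    \frac{A_{\tilde{v}}(\tilde{\Gamma})}{A_v(\Gamma)} = \sum_{\gamma \in \mathcal{N}(\Gamma)} 2^{\#C(\gamma)-1}\,\wt(\gamma).
\]
Since every negative vector field has at least one cycle (indeed at least one in each connected component), $\#C(\gamma) \ge 1$, so every exponent $\#C(\gamma)-1$ is a nonnegative integer and each coefficient $2^{\#C(\gamma)-1}$ is a positive integer. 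As each $\wt(\gamma)$ is a monomial in the edge weights with coefficient $1$, collecting terms shows that $\frac{A_{\tilde v}(\tilde\Gamma)}{A_v(\Gamma)}$ is a polynomial in the edge weights all of whose coefficients are positive integers, as claimed.

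There is essentially no obstacle here: the content has been front-loaded into Corollaries~\ref{2foldtheorem} and~\ref{NVFcor}. The only point requiring a word of care is the bookkeeping remark that $\#C(\gamma)\ge 1$ for every vector field $\gamma$ — this is what makes the division by $2$ harmless and keeps the coefficients integral rather than merely rational. (If one wished to be completely careful about the disconnected case, one could note that if $\tilde\Gamma$ is disconnected then $A_{\tilde v}(\tilde\Gamma)=0$ and the statement is vacuous; otherwise the cover is connected and the deck group is genuinely $\BZ/2\BZ$.)
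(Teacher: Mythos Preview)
Your proposal is correct and is essentially identical to the paper's own argument, which simply states that the result is an immediate consequence of Corollary~\ref{NVFcor} combined with Corollary~\ref{2foldtheorem}. Your additional remarks---invoking Theorem~\ref{regular} to justify the voltage-graph framework and noting that $\#C(\gamma)\ge 1$ so the division by $2$ preserves integrality---are exactly the bookkeeping details one would spell out if asked to expand the paper's one-line justification.
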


Positivity for general covers is still unknown; see Conjecture \ref{posconjecture}.

\subsection{Proofs of Theorem \ref{NVF}}

We now present two proofs of Theorem \ref{NVF}. The first is new, and the second is essentially due to Chaiken.

The first proof proceeds by deletion-contraction, and requires the following lemma.

\begin{lemma}
	Let $\Gamma$ be as in Theorem \ref{NVF} with voltage function $\nu: E \to \conj{\BZ[G]}$,
	let $v$ be any vertex of $\Gamma$, and let $g \in G$. We define a new voltage function $\nu_{v, g}$ given by
	\begin{align*}
		\nu_{v,g}(e) &= \begin{cases}
			g\nu(e) : & \mathrm{if} \; e \in E_{s}(v), e \nin E_t(v)\\
			g^{-1}\nu(e) : &\mathrm{if}\;e \in E_{t}(v), e \nin E_s(v)\\
			\nu(e) : &\mathrm{otherwise}
		\end{cases}
	\end{align*}
	Then: 
	\begin{enumerate}[nosep,label=(\alph*)]
	    \item For any cycle $c$ of $\Gamma$, we have $\nu(c) = \nu_{v,g}(c)$.
	    \item The determinant of the voltage Laplacian of $\Gamma$ with respect to the voltage $\nu$ is equal to the determinant of the voltage Laplacian of $\Gamma$ with respect to $\nu_{v,g}$. That is,
	    \begin{align*}
	        \det [\mathscr{L}(V, E, \wt, \nu)] = \det [\mathscr{L}(V, E, \wt, \nu_{v,g})]
	    \end{align*}
	\end{enumerate}
\end{lemma}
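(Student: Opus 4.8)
The plan is to prove the two parts in order, with part (a) being essentially a bookkeeping observation and part (b) the substantive claim. For part (a), let $c$ be a cycle of $\Gamma$. I would split into two cases depending on whether $v$ lies on $c$. If $v \notin c$, then no edge of $c$ lies in $E_s(v)$ or $E_t(v)$, so $\nu_{v,g}$ agrees with $\nu$ on every edge of $c$ and hence $\nu_{v,g}(c) = \nu(c)$. If $v \in c$, then $c$ passes through $v$ exactly once (since $c$ is a cycle), so it uses exactly one edge $e_{\text{in}} \in E_t(v)$ entering $v$ and exactly one edge $e_{\text{out}} \in E_s(v)$ leaving $v$. Assuming neither is a loop at $v$ (a loop would be in both $E_s(v)$ and $E_t(v)$ and falls under the "otherwise" case, contributing $\nu(e)$ unchanged), the edge $e_{\text{out}}$ picks up a factor $g$ and $e_{\text{in}}$ picks up a factor $g^{-1}$. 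Since $G$ is abelian, in forming $\nu_{v,g}(c) = \prod_{e \in c} \nu_{v,g}(e)$ these factors of $g$ and $g^{-1}$ cancel, giving $\nu_{v,g}(c) = \nu(c)$.

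For part (b), the cleanest route is to exhibit an explicit conjugation of the voltage Laplacian. Working in $\conj{\BZ[G]}[E]$, let $\Lambda$ be the diagonal matrix which has $g$ in the diagonal entry corresponding to $v$ and $1$ in all other diagonal entries (so $\Lambda$ is invertible with inverse having $g^{-1}$ in that slot). I would then check that
\begin{align*}
\mathscr{L}(V,E,\wt,\nu_{v,g}) = \Lambda^{-1}\, \mathscr{L}(V,E,\wt,\nu)\, \Lambda,
\end{align*}
and the claim $\det[\mathscr{L}(V,E,\wt,\nu_{v,g})] = \det[\mathscr{L}(V,E,\wt,\nu)]$ follows immediately from multiplicativity of the determinant. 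To verify the matrix identity entrywise: the $(i,j)$ entry of $\Lambda^{-1}\mathscr{L}\Lambda$ is $\Lambda_{ii}^{-1}\,\mathscr{L}_{ij}\,\Lambda_{jj}$. When neither $v_i$ nor $v_j$ equals $v$, this is unchanged, matching the "otherwise" case. When $v_i = v$ and $v_j \ne v$, the off-diagonal entry $\mathscr{L}_{ij} = -\sum_{e = (v,v_j)}\nu(e)\wt(e)$ gets multiplied by $g^{-1}$ on the left — but wait, I want the edges \emph{out} of $v$ to be multiplied by $g$, so I should instead take $\Lambda$ to have $g^{-1}$ at $v$ (equivalently conjugate by $\Lambda^{-1}$); I would fix the sign convention at the start. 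With the correct convention, edges in $E_s(v)\setminus E_t(v)$ (contributing to off-diagonal entries in row $v$) are scaled by $g$, edges in $E_t(v)\setminus E_s(v)$ (off-diagonal entries in column $v$) are scaled by $g^{-1}$, loops at $v$ and the diagonal degree term at $v$ are scaled by $g\cdot g^{-1} = 1$ (unchanged, consistent with the degree matrix $D$ being independent of $\nu$), and all entries not touching $v$ are unchanged. This exactly reproduces the definition of $\nu_{v,g}$.

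The main obstacle — really the only place care is needed — is the diagonal entry and the loop edges at $v$: one must confirm that the degree matrix contribution $D_{vv}$ and any loop contributions to $\mathscr{A}_{vv}$ are genuinely fixed by the conjugation (they are, since conjugation multiplies a diagonal entry by $\Lambda_{vv}^{-1}\Lambda_{vv} = 1$), and that this is consistent with $\nu_{v,g}$ assigning a loop at $v$ the value $\nu(e)$ rather than $g\nu(e)$ or $g^{-1}\nu(e)$ — which it does, via the "otherwise" clause, precisely because a loop lies in both $E_s(v)$ and $E_t(v)$. Once the sign convention on $\Lambda$ is pinned down, everything else is a routine entrywise comparison, and part (a) is then exactly the statement that conjugation does not change the characteristic data of cycles, which also follows abstractly from the fact that $\nu(c)$ for a cycle through $v$ is a "trace-like" quantity invariant under the diagonal conjugation.
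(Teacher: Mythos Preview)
Your proposal is correct and is essentially the same argument as the paper's. For (a), the paper likewise splits into the cases ``$v$ not on $c$ or $c$ a loop at $v$'' versus ``$c$ passes through $v$ once,'' and cancels the $g$ against the $g^{-1}$; for (b), the paper simply says that $\mathscr{L}(V,E,\wt,\nu)$ is transformed into $\mathscr{L}(V,E,\wt,\nu_{v,g})$ by multiplying the row of $v$ by $g$ and the column of $v$ by $g^{-1}$, which is exactly your diagonal conjugation once you settle on $\Lambda$ having $g^{-1}$ in the $v$-slot. The only cosmetic suggestion is to fix the $\Lambda$ convention up front rather than course-correcting mid-argument, and you can drop the closing remark that (a) ``also follows abstractly'' from the conjugation, since your direct cancellation already proves it.
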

\begin{proof}
\noindent\begin{enumerate}[nosep,label=(\alph*)]
	    \item If $c$ does not contain the vertex $v$, or if $c$ is a loop at $v$, then the voltages of all edges in $c$ remain unchanged. Otherwise, $c$ contains exactly one ingoing edge $e$ of $v$ and one outgoing edge $f$ of $v$, so that 
	    \begin{align*}\nu_{v,h}(c) &= \frac{\nu(c)}{\nu(e)\nu(f)}[g\nu(e)][g^{-1} \nu(f)]\\
	    &= \nu(c)
	    \end{align*}
	    as desired.
	    \item The matrix $\mathscr{L}(V, E, \wt, \nu)$ may be transformed into the matrix $\mathscr{L}(V, E, \wt, \nu_{v,g})$ by multiplying the row corresponding to $v$ by $g$ and multiplying the column corresponding to $v$ by $g^{-1}$, so the determinant remains unchanged. 
	\end{enumerate}
\end{proof}

This lemma will allow us some freedom to change the voltage of $\Gamma$ as needed in the following proof.

\begin{proof}[First proof of Theorem \ref{NVF}]
	Denote the left-hand side of the theorem as
	\begin{align*}
	    \Omega(\Gamma) := \sum_{\gamma \subseteq \Gamma} \left[\wt(\gamma) \prod_{c \in C(\gamma)} (1 - \nu(c)) \right]
	\end{align*}
	
	We proceed by deletion-contraction. The base case is when the only edges of $\Gamma$ are loops. When this happens, $\mathscr{L}(\Gamma)$ is diagonal, with
	\begin{align*}
		\ell_{ii} = \sum_{e = (v_i, v_i) \in E} (1 - \nu(e)) \wt(e).
	\end{align*}
	Thus we have
	\begin{align*}
		\det [\mathscr{L}(\Gamma)] &= \prod_{i = 1}^{n} \left(\sum_{\substack{e = (v_i, v_i) \in E}} \left[1 - \nu(e)\right]\wt(e)\right)
	\end{align*}
	If we expand the product above, each term will correspond to a unique combination of one loop per vertex of $\Gamma$. But such combinations are precisely the vector fields of $\Gamma$, so we obtain
	\begin{align*}
	    \det [\mathscr{L}(\Gamma)] = \Omega(\Gamma)
	\end{align*}
	
	 For the inductive step, assume that there exists at least one edge $e$ between distinct vertices, and assume that the proposition holds for graphs with fewer non-loop edges than $\Gamma$. Using the lemma, we may change the voltage of $\Gamma$ so that $e$ has voltage $1$ without changing either $\Omega(\Gamma)$ or $\det [\mathscr{L}(\Gamma)]$. Without loss of generality, let $v_1 = e_s$ and $v_2 = e_t$. 

	 If $\gamma$ is a vector field of $\Gamma$, then $\gamma$ either contains $e$ or it does not. In the latter case, $\gamma$ is also a vector field of $\Gamma \backslash e$. Clearly all such $\gamma$ arise uniquely from a vector field of $\Gamma \backslash e$. Therefore, there is a weight-preserving bijection between the vector fields of $\Gamma$ not containing $e$ and the vector fields of $\Gamma \backslash e$. 

	 Otherwise, if $e \in \gamma$, then no other edge of the form $(v_1, v_j)$ is in $\gamma$. We define a special type of contraction: let $\Gamma /_{1} e := (\Gamma/e) \backslash E_s(v_1)$. That is, we contract along $e$, and delete all other edges originally in $E_s(v_1)$. Note that the contraction process merges vertices $v_1$ and $v_2$ into a ``supervertex," which we denote $v_{12}$. 
	 
	 Then the vector field $\gamma$ descends uniquely to a vector field $\conj{\gamma}$ on $\Gamma /_{1} e$. Every vector field $\conj{\gamma}$ in $\Gamma /_1 e$ corresponds uniquely to a vector field of $\Gamma$ containing $e$, obtained by letting the unique edge coming out the supervertex $v_{12}$ in $\conj{\gamma}$ be the unique edge coming out of the vertex $v_2$ in $\gamma$, and letting $e$ be the unique edge with source at $v_1$ in $\gamma$. This inverse map shows that the vector fields of $\Gamma$ containing $e$ are in bijection with the vector fields of $\Gamma /_1 e$. This bijection is weight-preserving up to a factor of $\wt(e)$. Finally, note that $\gamma$ and its contraction $\conj{\gamma}$ have the same number of cycles, with the same voltages. If a cycle contains $e$ in $\gamma$, then that cycle is made one edge shorter in $\conj{\gamma}$, but still has positive length since $e$ is assumed to not be a loop. If $c$ is a cycle containing $e$ in $\Gamma$, then because $e$ has voltage $1$, the cycle voltage $\nu(c/e)$ of the contracted version of $c$ is equal to the cycle voltage before contraction. Thus, we may write
	 \begin{align*}
	 	\Omega(\Gamma) = \Omega(\Gamma \backslash e) + \wt(e) \Omega(\Gamma /_1 e)
	 \end{align*}
	 By the inductive hypothesis, since $\Gamma \backslash e$ and $\Gamma /_1 e$ have strictly fewer non-loop edges than $\Gamma$, we have
	 \begin{align*}
	 	\Omega(\Gamma \backslash e) + \wt(e) \Omega(\Gamma /_1  e) = \det [\mathscr{L}(\Gamma \backslash e)] + \wt(e)\det [\mathscr{L}(\Gamma /_1  e)]
	 \end{align*}
	 Note that $\mathscr{L}(\Gamma \backslash e)$ is equal to $\mathscr{L}(\Gamma)$ with $\wt(e)$ deleted from both the $1,1$- and $1,2$-entries. Therefore, via expansion by minors, we obtain
	 \begin{align}\label{eq2}
\det [\mathscr{L}(\Gamma \backslash e)] + \wt(e) \det [\mathscr{L}_{1}^1(\Gamma)] + \wt(e) \det [\mathscr{L}_{1}^2(\Gamma)] = \det [\mathscr{L}(\Gamma)]
	 \end{align}
	 where $\mathscr{L}_{i}^j(\Gamma)$ is the submatrix of $\mathscr{L}(\Gamma)$ obtained by removing the $i$-th row and the $j$-th column. 

	 To construct $\mathscr{L}(\Gamma /_1 e)$ from $\mathscr{L}(\Gamma)$, we disregard the first row of $\mathscr{L}(\Gamma)$, since the special contraction $\Gamma /_1 e$ simply removes the outgoing edges $E_s(v_1)$. Then, we combine the first two columns of $\mathscr{L}(\Gamma)$ by making their sum the first column of $\mathscr{L}(\Gamma /_1 e)$, since when we perform a contraction that merges $v_1$ and $v_2$ into $v_{12}$, we also have $E_t(v_1) \cup E_{t}(v_2) = E_t(v_{12})$. Thus $\mathscr{L}(\Gamma /_1 e)$ is a $(n - 1) \times (n - 1)$ matrix that agrees with both $\mathscr{L}_{1}^1(\Gamma)$ and $\mathscr{L}_{1}^2(\Gamma)$ on its last $n - 2$ columns, and whose first column is the sum of the first columns of $\mathscr{L}_{1}^1(\Gamma)$ and $\mathscr{L}_{1}^2(\Gamma)$. Therefore,
	 \begin{align*}
	 	\det [\mathscr{L}(\Gamma /_1 \; e)] = \det [\mathscr{L}_{1}^1(\Gamma)] + \det [\mathscr{L}_{1}^2(\Gamma)]
	 \end{align*}
	 Substituting into (\ref{eq2}), we obtain
	 \begin{align*}
	 	\det [\mathscr{L}(\Gamma)] &= \det [\mathscr{L}(\Gamma \backslash e)] + \wt(e) \det [\mathscr{L}(\Gamma /_1 \; e)] \\
	 	&= \Omega(\Gamma \backslash e) + \wt(e)\Omega(\Gamma /_1 \; e) \\
	 	&= \Omega(\Gamma)
	 \end{align*}
	 as desired. 
\end{proof}

The second proof of the theorem follows a style similar to Chaiken's proof of the Matrix Tree Theorem in \cite{chaiken}. Chaiken actually proves a more general identity, which he calls the ``All-Minors Matrix Tree Theorem," that gives a combinatorial formula for any minor of the voltage Laplacian. We do not reproduce such generality here, but instead follow a simplified version of his proof, more along the lines of Stanton and White's version of Chaiken's proof of the Matrix Tree Theorem \cite{stanton}. 

\begin{proof}[Second proof of Theorem \ref{NVF} (Chaiken)]
    For simplicity, assume that $\Gamma$ has no multiple edges, since we can always decompose $\det [\mathscr{L}(\Gamma)]$ into a sum of determinants of voltage Laplacians of simple subgraphs of $\Gamma$, which also partitions the sum given in the theorem. We also assume that $\Gamma$ is a complete bidirected graph, since we can ignore edges not in $\Gamma$ by just considering them to have edge weight $0$.
    Write $\mathscr{L}(\Gamma) = (\ell_{ij})$, write $D(\Gamma) = (d_{ij})$, and write $\mathscr{A}(\Gamma) = (a_{ij})$, so that $\ell_{ij} = \delta_{ij}d_{ii} - a_{ij}$. Then the determinant of $\mathscr{L}(\Gamma)$ may be decomposed as
    \begin{align*}
        \det [\mathscr{L}(\Gamma)] = \det[(\delta_{ij}d_{ii} - a_{ij})] &=
        \sum_{S \subseteq [n]} \left[\sum_{\pi \in P(S)} (-1)^{\#C(\pi)} \wt_{\nu}(\pi)\prod_{i \in [n] - S} d_{ii}\right] 
    \end{align*}
    where $P(S)$ denotes the set of permutations of $S$, the set $C(\pi)$ is set of cycles of $\pi$, and $\wt_{\nu}(\pi) := \prod_{i \in S} a_{i, \pi(i)}$. The product of the $d_{ii}$ may be rewritten as a sum over functions $[n] - S \to [n]$, yielding
    \begin{align}
        \nonumber
        \det [\mathscr{L}(\Gamma)] &= \sum_{S \subseteq [n]} \sum_{\pi \in P(S)} (-1)^{c(\pi)} \wt_{\nu}(\pi) \sum_{f: [n] - S \to [n]} \wt(f) \\
        \label{orbit}
        &= \sum_{S \subseteq [n]} \sum_{\pi \in P(S)}\sum_{f: [n] - S \to [n]} (-1)^{c(\pi)} \wt_{\nu}(\pi)  \wt(f) 
    \end{align}
    where $\wt(f)$ denotes the \emph{unvolted} weight of the edge set corresponding to the function $f$, since this part of the product ultimately comes from the degree matrix.
    Thus, the determinant may be expressed as a sum of triples $(S, \pi, f)$ of the above form---that is, we let $S$ be an arbitrary subset of $[n]$, we let $\pi$ be a permutation on $S$, and we let $f$ be a function $[n] - S \mapsto [n]$. 
    
    The permutation $\pi$ can always be decomposed into cycles, and $f$ will sometimes have cycles as well---that is, sometimes we have $f^{(m)}(k) = k$ for some $k \in \BZ$ and $k \in [n] - S$. We can ``swap" cycles between $\pi$ and $f$. Suppose $c$ is a cycle of $f$ that we want to swap into $\pi$. Let the subset of $[n]$ on which $c$ is defined be denoted $W$. Then we may obtain from our old triple a new triple $(S \coprod W, \pi \coprod c , f|_{[n] - S - W})$, where $\pi \coprod c$ denotes the permutation on $S \coprod W$ given by $(\pi \coprod c)(v) = \pi(v)$ if $v \in S$ and $(\pi \coprod c)(v) = c(v)$ if $v \in W$. That is, we ``move" $C$ from $f$ to $\pi$. Similarly, if $c$ is a cycle of $\pi$, then we can obtain a new triple $(S - W, \pi|_{S - W}, f \coprod c)$. Note that these two operations are inverses. 
    
    This process is always weight-preserving: it does not matter whether $c$ is considered as a part of $\pi$ or as a part of $f$, since it will always contribute $\wt(c)$ to the product. However, one iteration of this map will swap the sign of $(-1)^{\#C(\pi)}$, and will also remove or add a factor from $\wt_{\nu}(\pi)$ corresponding to the voltage of $c$. If $\pi$ and $f$ have $k$ cycles between both of them, then there are $2^k$ possibilities for swaps, yielding a free action of $(\BZ/2\BZ)^k$. If we start from the case where $\pi$ is the empty partition, then the sign $(-1)^{\#C(\pi)}$ starts at $1$. Every time we choose to swap a cycle $c$ into $\pi$ from $f$, we flip this sign and multiply by $\nu(c)$, effectively multiplying by $-\nu(c)$. Thus, the sum of terms in (\ref{orbit}) coming from the orbit of the action of $(\BZ/2\BZ)^k$ on $(S, f, \pi)$ is
    \begin{align*}
        \wt(\pi)\wt(f)\prod_{c \in C(\pi) \cup C(f)} (1 - \nu(c))
    \end{align*}
    where $\wt(\pi)$ is now unvolted. This orbit class corresponds to the contribution of one vector field $\gamma$ of $\Gamma$ to the overall sum, where $\gamma$ is the unique vector field such that $\wt(\gamma) = \wt(\pi) \wt(f)$. Thus, summing over all orbit classes, we obtain the desired formula:
    \begin{align*}
        \det [\mathscr{L}(\Gamma)] = \sum_{\gamma \subseteq \Gamma}\left[\wt(\gamma)\prod_{c \in C(\gamma)}(1 - \nu(c))\right]
    \end{align*}
\end{proof}

\section{Conjectures and Future Directions}\label{sec: future}
We end our paper by a discussion of several unanswered questions and possible future research directions.
\subsection{Interpreting the restriction-of-scalars determinant}

In the case where the voltage group $G$ is prime cyclic, Corollary \ref{primecyclic} yields a computationally nice interpretation of Theorem \ref{bigboi}: the $\BZ$-determinant is really a field norm, which may be computed in ways other than restriction of scalars---for example, as a product of Galois conjugates. This result could be extended if there existed an analogue to the field norm for arbitrary reduced group algebras, or indeed for general free algebras of finite rank. A good first step might be to consider abelian groups.

\begin{problem}
    Let $R$ be a commutative ring, and let $A$ be a free algebra over $R$ of finite rank. Let $\alpha \in A$. Find an alternative expression or interpretation of $\det_{R} [\alpha]$, where the multiplicative action of $\alpha$ is viewed as a linear transformation on the $R$-module $A$, analogous to a field norm. Useful special cases include $R = \BZ$ or $\BQ$, when $A$ is commutative, and/or when $A$ is the group algebra or reduced group algebra of some finite group $G$.
\end{problem}
\subsection{Positivity of the ratio and possible combinatorial expression using vector fields}

By Corollary \ref{integrality}, the ratio $\frac{A_{\tilde{v}}(\tilde{\Gamma})}{A_v(\Gamma)}$ is a homogeneous polynomial with integer coefficients. We further conjecture that these coefficients are positive. In more detail:

\setcounter{section}{1}
\setcounter{restate-conj}{6}
\begin{restate-conj}
	 Let $\Gamma$ be a directed graph, $\tilde{\Gamma}$ a $k$-cover of $\Gamma$, $v$ a vertex of $\Gamma$ and $\tilde{v}$ a lift of $v$ in $\tilde{\Gamma}$.  If the edge weights of $\Gamma$ are indeterminates then the polynomial $\frac{A_{\tilde{v}}(\tilde{\Gamma})}{A_v(\Gamma)}$ has positive coefficients.
\end{restate-conj}
\setcounter{section}{5}
Corollary \ref{2coverpositivity} gives positivity for regular $2$-fold cover, and the following proposition gives a way to extend that result to all regular covers by $2$-groups. However, in the case of general regular covers, we do not have a concrete combinatorial interpretation of $\det[\mathscr{L}(\Gamma)]_{\BZ[E]} $. Such an interpretation would probably be the cleanest way to prove Conjecture \ref{posconjecture}.

\begin{proposition}
Suppose we have the exact sequence of groups $1\to N\to G\to H\to 1$, where $N$ and $H$ have the property that every regular $N$- (resp. $H$-) cover satisfies the positivity conjecture. Then every regular $G$-cover satisfies the positivity conjecture.
\end{proposition}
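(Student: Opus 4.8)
The plan is to factor the cover $\tilde\Gamma\to\Gamma$ through an intermediate cover and apply the two hypotheses one stage at a time. Present $\tilde\Gamma$ as a derived $G$-cover, so $G$ acts freely on $\tilde\Gamma$ with $\tilde\Gamma/G=\Gamma$. If $A_{\tilde v}(\tilde\Gamma)=0$ the ratio is $0$ and there is nothing to prove, so assume it is nonzero; then $\tilde\Gamma$ is connected, hence so are its quotients. Since $N\trianglelefteq G$, standard covering-space theory produces $\hat\Gamma:=\tilde\Gamma/N$, which inherits from $\tilde\Gamma$ the edge weights pulled back from $\Gamma$ (each edge of $\hat\Gamma$ lies over a unique edge $e\in E$ and carries weight $\wt(e)$), together with a tower $\tilde\Gamma\to\hat\Gamma\to\Gamma$ in which $\tilde\Gamma\to\hat\Gamma$ is a regular $N$-cover and $\hat\Gamma\to\Gamma$ is a regular $H$-cover (using $G/N\cong H$); by Theorem~\ref{regular} each stage is again a derived cover. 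Fix $v$, a lift $\tilde v$ of $v$, and let $\hat v$ be its image in $\hat\Gamma$. Projecting directed paths to $\tilde v$ shows that every vertex of $\hat\Gamma$ reaches $\hat v$ and every vertex of $\Gamma$ reaches $v$, so $A_{\hat v}(\hat\Gamma)$ and $A_v(\Gamma)$ are nonzero as well.

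Since $\Gamma$ has independent indeterminate edge weights, the hypothesis on $H$ applies verbatim to the regular $H$-cover $\hat\Gamma\to\Gamma$, giving that $\frac{A_{\hat v}(\hat\Gamma)}{A_v(\Gamma)}$ is a polynomial with positive coefficients. The first stage is subtler because the edge weights of $\hat\Gamma$ are \emph{not} independent: edges lying over the same $e\in E$ repeat the weight $\wt(e)$. To get around this, let $\hat\Gamma'$ denote the same underlying digraph as $\hat\Gamma$ but with a fresh independent indeterminate $y_{\hat e}$ on each edge $\hat e$; choose $N$-valued voltages on $\hat\Gamma$ that realize $\tilde\Gamma\to\hat\Gamma$ as a derived $N$-cover (Theorem~\ref{regular}), transport them to $\hat\Gamma'$, and let $\tilde\Gamma'$ be the resulting derived $N$-cover. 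Then $\tilde\Gamma'$ and $\tilde\Gamma$ share an underlying digraph, with the edge of $\tilde\Gamma'$ over $\hat e$ carrying weight $y_{\hat e}$, and the hypothesis on $N$ applies to $\tilde\Gamma'\to\hat\Gamma'$, yielding
\begin{align*}
A_{\tilde v'}(\tilde\Gamma')=P'\cdot A_{\hat v'}(\hat\Gamma')
\end{align*}
for a polynomial $P'$ in the $y_{\hat e}$ with positive coefficients (here $\hat v',\tilde v'$ denote $\hat v,\tilde v$ in the primed graphs). Now apply the ring homomorphism $\phi\colon\BZ[y_{\hat e}:\hat e\in\hat E]\to\BZ[\wt(E)]$ sending $y_{\hat e}$ to the weight of the image of $\hat e$ in $\Gamma$. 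Because the primed and unprimed graphs coincide as rooted digraphs, $\phi$ identifies the weighted arborescence sum of each primed graph with that of its unprimed counterpart, so $\phi(A_{\tilde v'}(\tilde\Gamma'))=A_{\tilde v}(\tilde\Gamma)$ and $\phi(A_{\hat v'}(\hat\Gamma'))=A_{\hat v}(\hat\Gamma)$. Applying $\phi$ to the displayed identity and dividing by $A_{\hat v}(\hat\Gamma)\neq0$ gives $\frac{A_{\tilde v}(\tilde\Gamma)}{A_{\hat v}(\hat\Gamma)}=\phi(P')$, and since $\phi$ sends each $y_{\hat e}$ to a single variable, $\phi(P')$ is again a polynomial with positive coefficients.

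Finally, telescoping the tower,
\begin{align*}
\frac{A_{\tilde v}(\tilde\Gamma)}{A_v(\Gamma)}=\frac{A_{\tilde v}(\tilde\Gamma)}{A_{\hat v}(\hat\Gamma)}\cdot\frac{A_{\hat v}(\hat\Gamma)}{A_v(\Gamma)}
\end{align*}
is a product of two polynomials with positive coefficients, hence a polynomial with positive coefficients, which is exactly the positivity conjecture (Conjecture~\ref{posconjecture}) for the regular $G$-cover $\tilde\Gamma\to\Gamma$. The step I expect to require the most care is the middle paragraph: one must justify precisely that replacing the repeated weights of $\hat\Gamma$ by independent ones and then specializing back via $\phi$ transports the \emph{quotient} $\frac{A_{\tilde v'}(\tilde\Gamma')}{A_{\hat v'}(\hat\Gamma')}$ to $\frac{A_{\tilde v}(\tilde\Gamma)}{A_{\hat v}(\hat\Gamma)}$ (so that the ratio remains a polynomial after specialization), and that a monomial specialization of a positive-coefficient polynomial stays positive-coefficient. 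The decomposition into a tower of $N$- and $H$-covers, and the identification of the two stages as regular covers with the correct deck groups and edge weights, are routine covering-space bookkeeping.
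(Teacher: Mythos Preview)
Your argument follows the same route as the paper's: factor the $G$-cover through the intermediate quotient $\hat\Gamma=\tilde\Gamma/N$, check that the two stages are regular $N$- and $H$-covers respectively, and multiply the two stage ratios. The paper establishes regularity of the two stages by writing down explicit voltages via Schreier cocycles, whereas you appeal to general covering-space theory and Theorem~\ref{regular}; either is adequate.

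You are actually more careful than the paper on one point. The hypothesis on $N$ is stated for base graphs whose edge weights are \emph{independent} indeterminates, but the edges of $\hat\Gamma$ carry repeated weights pulled back from $\Gamma$, so the hypothesis does not apply to $\hat\Gamma$ on the nose. Your workaround---replace the weights of $\hat\Gamma$ by fresh indeterminates, apply the $N$-hypothesis there, and then specialize back by the monomial map $y_{\hat e}\mapsto \wt(\pi(\hat e))$---is the right fix and preserves positivity of coefficients. The paper's proof silently assumes this step.
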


\begin{proof}
Let $\Gamma$ be a graph, let $\Gamma_G$ be a regular $G$-cover of $\Gamma$, and let $\Gamma_H$ be the image of $\Gamma_G$ under the projection map $G\to H$. We will show that both the covers $\Gamma_H\to \Gamma$ and $\Gamma_G$ to $\Gamma_H$ are regular, and therefore that those arborescence ratios are positive. Therefore, the arborescence ratio for the cover $\Gamma_G\to \Gamma$ is the product of these ratios, and therefore positive as well.

Since $G$ is a group extension of $H$ by $N$, we will write elements of $G$ as ordered pairs $(h,n), h\in H, n\in n$, where $n\mapsto (h,n)\mapsto h$ under our exact sequence. Multiplication of these elements involves Schreier theory: \[(h_1,n_1)\cdot (h_2,n_2) = (h_1h_2,f(h_1,h_2,n_1)n_2),\] where $f(h_1,h_2,n_1)$ is an element of $N$ that is independent of $n_2$ (see, for example, \cite{morandi}).

To show that a given cover is regular, we need to demonstrate voltages for the edges of the base graph that give us the desired cover. First, we show that $\Gamma_H$ is regular over $\Gamma$.

Let $e:v\to w$ be an edge in $\Gamma$, with $G$-voltage $g = (h_g,n_g)$. Then $e$ lifts to the set of edges $\{e^{g'}:v^{g'}\to w^{gg'}\}$ in $\Gamma_G$, and these edges project to the set $\{e^h: v^h\to w^{h_gh}\}$ in $\Gamma_H$. Therefore, in the cover $\Gamma_H\to\Gamma$, we can set the voltage of $e$ to be $h_g$, and this gives a regular $H$-cover.

Now we show that $\Gamma_G$ is a regular $N$-cover of $\Gamma_H$. This is more challenging since $H$ is not necessarily a subgroup of $G$. Consider the edge $e$ of $\Gamma$ from above, and let $h\in H$. The edge $e^h: v^h\to w^{h_gh}$ in $\Gamma_H$ is covered by the edges $\{e^{(h,n)} | n\in N\}$ in $\Gamma_G$. Computations in $G$ tell us that \[e^{(h,n)} : v^{(h,n)}\to w^{(h_g,n_g)\cdot (h,n)} = w^{(h_gh,f(h_g,hn_g)n)}.\] Set the voltage on $e_h$ to be $f(h_ghn_g)\in N$. Then \[(e^h)^n: (v^h)^n\to (w^{h_gh})^{f(h_ghn_g)n},\] so identifying $(e^h)^n$ with $e^{(h,n)}$ and likewise for vertices gives us $\Gamma_G$ as a regular cover of $\Gamma_H$.
\end{proof}

\begin{corollary}
Let $G$ be a 2-group. Then every regular $G$-cover satisfies the positivity conjecture.
\end{corollary}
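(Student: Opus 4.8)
The plan is to induct on the order of $G$, peeling off a single factor of $\BZ/2\BZ$ at each stage by means of the previous proposition, with Corollary~\ref{2coverpositivity} supplying the base case.

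The one group-theoretic ingredient I would record is that every finite $2$-group $G$ with $|G| > 1$ has a normal subgroup $N$ of index $2$: any maximal subgroup of a finite $p$-group has index $p$ and is normal, so taking $p = 2$ produces such an $N$, which is itself a $2$-group, now of order $|G|/2$. This yields a short exact sequence $1 \to N \to G \to \BZ/2\BZ \to 1$.

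Now I would argue by strong induction on $|G|$. If $|G| = 1$ the cover is trivial and its arborescence ratio is $1$; if $|G| = 2$ the statement is exactly Corollary~\ref{2coverpositivity}. For $|G| > 2$, apply the inductive hypothesis to $N$ (a $2$-group of order $|G|/2 < |G|$) to conclude that every regular $N$-cover satisfies the positivity conjecture, and recall that every regular $\BZ/2\BZ$-cover does so by Corollary~\ref{2coverpositivity}. Feeding the exact sequence $1 \to N \to G \to \BZ/2\BZ \to 1$ into the previous proposition then shows that every regular $G$-cover satisfies the positivity conjecture, completing the induction.

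I do not expect a genuine obstacle here: all of the substantive work — factoring a regular $G$-cover $\Gamma_G \to \Gamma$ as $\Gamma_G \to \Gamma_H \to \Gamma$ through regular covers so that the arborescence ratios multiply, and in particular the Schreier-cocycle bookkeeping that exhibits the $N$-voltages realizing $\Gamma_G$ as a regular cover of $\Gamma_H$ — is already carried out in the proof of the previous proposition and is simply invoked. The only point requiring a little care is to phrase the induction as a strong induction on $|G|$, since one has no control on the internal structure of $N$ beyond $|N| = |G|/2$; with that in place the argument is immediate.
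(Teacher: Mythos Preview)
Your argument is correct and follows the same inductive strategy as the paper: peel off a factor of $\BZ/2\BZ$ at each stage, invoke Corollary~\ref{2coverpositivity} for that factor, and apply the proposition. The only difference is which side of the short exact sequence carries the $\BZ/2\BZ$: the paper uses that a nontrivial $2$-group has $\BZ/2\BZ$ as a normal subgroup (via nontriviality of the center), obtaining $1 \to \BZ/2\BZ \to G \to H \to 1$ and inducting on the quotient $H$, whereas you take an index-$2$ normal subgroup $N$ to obtain $1 \to N \to G \to \BZ/2\BZ \to 1$ and induct on $N$. Both decompositions are standard for $p$-groups and feed into the proposition equally well; neither buys anything the other does not.
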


\begin{proof}
By Corollary \ref{2coverpositivity}, this result holds in the case of $\mathbb{Z}/2\mathbb{Z}$. Since $G$ is a 2-group, $\mathbb{Z}/2\mathbb{Z}$ is a normal subgroup, and so the proposition can be applied inductively.
\end{proof}

\begin{problem}
    Find a combinatorial interpretation of the polynomial $\frac{1}{k}\det [\mathscr{L}(\Gamma)]_{\BZ[E]} = \frac{A_{\tilde{v}}(\tilde{\Gamma})}{A_v(\Gamma)}$, assuming Conjecture \ref{posconjecture} is true.
\end{problem}

Vector fields are a potential source of a combinatorial interpretation for the arborescence ratio. We observed that in a $k$-fold cover, the ratio $\frac{A_{\tilde{v}}(\tilde{\Gamma})}{A_v(\Gamma)}$ always appears to be a product of $(k-1)$ weighted sums of vector fields. 
\begin{conjecture}
 Let $\tilde \Gamma$ be a $k$-fold cover of $\Gamma$, then
\[\frac{A_{\tilde v}{(\tilde\Gamma)}}{A_{\tilde v}{(\Gamma)} }=\sum_{(\gamma_1,\cdots,\gamma_{k-1})\in \mathcal V^{k-1}}f(\gamma_1,\cdots,\gamma_{k-1})\prod_{i=1}^{k-1}\wt(\gamma_i)  \] 
where $\mathcal V$ is the set of vector fields of $\Gamma$, and $f$ is an $\mathbb Z_{\geq 0}$-valued function.
\end{conjecture}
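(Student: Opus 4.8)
The plan is to start from Theorem~\ref{bigboi}, which reduces the conjecture to exhibiting $\tfrac1k\det[\mathscr L(\Gamma)]_{\BZ[E]}$ as a sum over ordered $(k-1)$-tuples of vector fields of $\Gamma$ with coefficients in $\BZ_{\geq 0}$. Integrality and homogeneity of this polynomial are already Corollary~\ref{integrality}, so the entire difficulty is the existence of a \emph{nonnegative-integer} attribution $f$ of its weight-monomial coefficients to the tuples that realize each monomial. I would attack this in layers of increasing generality, the first of which is regular cyclic covers of prime order.

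For $k=p$ prime, Corollary~\ref{primecyclic} gives $\det[\mathscr L(\Gamma)]_{\BZ[E]}=\prod_{i=1}^{p-1}\det[\sigma_i(\mathscr L(\Gamma))]$, where $\sigma_i$ corresponds to the nontrivial character $\chi_i\colon g\mapsto\zeta_p^i$; each factor is the determinant of the matrix obtained from $\mathscr L(\Gamma)$ by folding the scalar voltages $\chi_i(\nu(e))$ into the weights, and Chaiken's Theorem~\ref{NVF} (whose proof is valid verbatim for voltages valued in any commutative ring) applies to this factor with voltage $\chi_i\circ\nu$. This yields
\begin{align*}
    \det[\mathscr L(\Gamma)]_{\BZ[E]}=\prod_{i=1}^{p-1}\Bigl(\sum_{\gamma\in\mathcal V}\wt(\gamma)\prod_{c\in C(\gamma)}\bigl(1-\chi_i(\nu(c))\bigr)\Bigr),
\end{align*}
and multiplying out already produces the conjectured shape, with coefficient $g(\gamma_1,\dots,\gamma_{p-1})=\prod_{i=1}^{p-1}\prod_{c\in C(\gamma_i)}\bigl(1-\chi_i(\nu(c))\bigr)$. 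The prototype $p=2$ is Corollary~\ref{2coverpositivity}: $g(\gamma)$ vanishes unless all cycles of $\gamma$ are negative, in which case $g(\gamma)=2^{\#C(\gamma)}$ and $f(\gamma)=2^{\#C(\gamma)-1}$. For odd $p$ the individual $g(\gamma_1,\dots,\gamma_{p-1})$ are genuinely complex, so no termwise definition of $f$ exists; one must collect tuples with equal weight monomial (whose total coefficient is an integer divisible by $k$ by Corollary~\ref{integrality}) and split that total positively. Moreover the split must be \emph{asymmetric} in the $\gamma_i$: already the two-vertex base graph with parallel edges of voltages $g$ and $g^2$ over $\BZ/3\BZ$ forces $f(\gamma_1,\gamma_2)+f(\gamma_2,\gamma_1)=1$. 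I would therefore look for a combinatorial rule — choosing, say, a cyclic order on the nontrivial characters together with a matching that records how the cycle voltages of the $\gamma_i$ pair up — that canonically routes each monomial's coefficient to one distinguished tuple.

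The prime-cyclic factorization generalizes to arbitrary regular abelian covers: $\conj{\BQ[G]}$ is a product of cyclotomic fields, the restriction-of-scalars norm is the product of the corresponding field norms, and hence $\det[\mathscr L(\Gamma)]_{\BZ[E]}=\prod_{\chi\neq 1}\det[\mathscr L_\chi(\Gamma)]$ over all nontrivial characters, after which the same expansion and the same combinatorial-routing problem appear. The group-extension Proposition of Section~\ref{sec: future} then reduces a regular $G$-cover arising from $1\to N\to G\to H\to 1$ to an $H$-cover composed with an $N$-cover, so positivity propagates along subquotients; in particular proving it for all $\BZ/p\BZ$ would give it for all regular $p$-group covers, and the abelian case would give all solvable regular covers. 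For non-abelian $G$ one would replace characters by the Wedderburn components of $\conj{\BQ[G]}$ and field norms by reduced norms over matrix algebras, which requires a matrix-valued refinement of Chaiken's formula; this is the point at which the clean approach is most likely to break.

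For general, possibly non-regular, $k$-fold covers there is no voltage group and the norm factorization fails, so the matrix $[\mathscr L(\Gamma)]_{\BZ[E]}$ of Definition~\ref{lowerright} must be handled directly; I would attempt a deletion–contraction recursion in the style of the first proof of Theorem~\ref{NVF}, namely $\Psi(\Gamma)=\Psi(\Gamma\backslash e)+\wt(e)\,\Psi(\Gamma/_1 e)$ for $\Psi=\det[\mathscr L(\Gamma)]_{\BZ[E]}$ and the special base-graph contraction $\Gamma/_1 e$, checking that a candidate $f$-expansion is preserved at each step and bottoming out at the all-loops case, where the claim is a plain product expansion. The serious obstacle is the same in every layer: there is no visible reason why the cross terms produced by the norm (or by the recursion) reassemble into \emph{nonnegative} integers, and I expect the realistic route to be to first guess and verify a genuine combinatorial interpretation of $\det[\mathscr L(\Gamma)]_{\BZ[E]}$ — of the kind requested in Section~\ref{sec: future} — on small cases ($\BZ/p\BZ$, then $\BZ/4\BZ$ and $\BZ/2\BZ\times\BZ/2\BZ$) before attempting a uniform argument.
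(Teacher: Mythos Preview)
The statement you are trying to prove is stated in the paper as a \emph{conjecture} (in Section~\ref{sec: future}, ``Conjectures and Future Directions''), and the paper gives no proof of it. There is therefore no ``paper's own proof'' to compare your proposal against; the authors explicitly leave this open, noting only the $2$-fold case (Corollary~\ref{2coverpositivity}) and its extension to $2$-groups via the group-extension proposition.

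Your proposal is not a proof but a research outline, and you are candid about this: you correctly identify that the character-factorization for prime cyclic covers produces complex coefficients $g(\gamma_1,\dots,\gamma_{p-1})$ that are not individually nonnegative integers, and that some asymmetric combinatorial routing of monomials to tuples would be needed. This is exactly the obstruction the paper flags when it says ``we do not have a concrete combinatorial interpretation of $\det[\mathscr{L}(\Gamma)]_{\BZ[E]}$'' and poses the corresponding open problem. Your layered strategy (prime cyclic, then abelian via Wedderburn, then solvable via the extension proposition, then general via deletion--contraction on the matrix of Definition~\ref{lowerright}) is reasonable and goes somewhat beyond what the paper sketches, but none of the layers is actually completed: the key step --- producing a canonical nonnegative attribution $f$ even for $\BZ/3\BZ$ --- remains a guess you have not supplied. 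Until that rule is written down and verified, the conjecture stays open.
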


Moreover, as we look over all possible $k$-fold cover $\tilde\Gamma$, the ratios exhaust all possible $(k-1)$-tuples of vector fields of the base graph, which is only known in the $2$-fold case. This observation motivates the following conjecture stated in terms of random covers.

\begin{conjecture}\label{random}
    Let $\Gamma=(E,V)$ be a graph, fix a vertex $v$ with non-trivial arborescence. Let $\Gamma'$ be a random $k$-fold cover of $\Gamma$, assuming uniform distribution. Then the expected value of the ratio of arborescence is
    \[\mathbb{E}\left[\frac{\mathcal A_{v'}(\Gamma')}{\mathcal A_{v}(\Gamma)}\right]=\frac{1}{k}\left(\sum_{\gamma\in \mathcal V}\wt (\gamma) \right)^{k-1}=\frac{1}{k}\prod_{w\in V}\left(\sum_{\alpha\in E_s(w)}\wt (\alpha) \right)^{k-1}\]
where $\mathcal V$ is the set of vector fields of $\Gamma$.
\end{conjecture}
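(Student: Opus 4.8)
The plan is to deduce the statement from Theorem~\ref{bigboi} by a moment computation. Interpreting a uniformly random $k$-fold cover $\tilde\Gamma$ as the derived cover obtained by assigning to each edge $e\in E$ an independent uniformly random permutation-voltage $\sigma_e\in\mathfrak S_k$, Theorem~\ref{bigboi} and linearity of expectation reduce the claim to
\[
  \mathbb E_{\sigma}\!\left[\det[\mathscr L(\Gamma)]_{\BZ[E]}\right]\;=\;\Bigl(\textstyle\sum_{\gamma\in\mathcal V}\wt(\gamma)\Bigr)^{k-1}\;=\;\prod_{i=1}^{n} d_i^{\,k-1},\qquad d_i:=\sum_{e\in E_s(v_i)}\wt(e)
\]
(the last equality being the elementary fact that a vector field of $\Gamma$ is exactly a free choice of one outgoing edge at each vertex).

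The first step is to model $[\mathscr L(\Gamma)]_{\BZ[E]}$ representation-theoretically. Let $\rho\colon\mathfrak S_k\to GL_k$ be the permutation representation and $\overline\rho$ its $(k-1)$-dimensional standard constituent (the action on $\BZ^k/\langle(1,\dots,1)\rangle$). In $V$-block form, $L(\tilde\Gamma)=\sum_i d_i\,(E^{(ii)}\otimes I_k)-\sum_{e\in E}\wt(e)\,(E^{(e)}\otimes\rho(\sigma_e))$, where $E^{(ii)}$ and $E^{(e)}$ are the evident $n\times n$ matrix units. The block triangularization of Lemma~\ref{trilem} is precisely the splitting of this operator along $\mathbb C^V\otimes\langle(1,\dots,1)\rangle$ and its complement; so $[\mathscr L(\Gamma)]_{\BZ[E]}$ represents, in a suitable basis, the operator $\widehat M:=D\otimes I_{k-1}-\sum_{e\in E}\wt(e)\,(E^{(e)}\otimes\overline\rho(\sigma_e))$ with $D=\mathrm{diag}(d_1,\dots,d_n)$, and in particular $\det[\mathscr L(\Gamma)]_{\BZ[E]}=\det\widehat M$. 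Factoring out the (generically invertible) matrix $D\otimes I_{k-1}$ over the fraction field $\BQ(\wt(E))$, the goal becomes
\[
  \mathbb E_\sigma\!\left[\det\Bigl(I-\textstyle\sum_{e\in E}\tfrac{\wt(e)}{d_{a(e)}}\,E^{(e)}\otimes\overline\rho(\sigma_e)\Bigr)\right]\;=\;1,
\]
where $v_{a(e)}$ denotes the source of $e$. It is not a priori clear — and is the whole point — that $\mathbb E[\det\widehat M]$ should equal $\det\mathbb E[\widehat M]=\prod_i d_i^{\,k-1}$: the left side is the expectation of a polynomial of degree $k-1$ in each correlated random block $\overline\rho(\sigma_e)$, and the content of the argument is that all higher-order contributions nevertheless cancel.

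The second step peels off the voltages one edge at a time. Fixing $e\in E$, write $X=X_{\widehat e}+c\,E^{(e)}\otimes\overline\rho(\sigma_e)$ with $c=\wt(e)/d_{a(e)}$ and $X_{\widehat e}$ the sum over the remaining edges, and factor $E^{(e)}\otimes\overline\rho(\sigma_e)=R\,\overline\rho(\sigma_e)\,C$, where $R$ includes, and $C$ projects to, the $(k-1)$-dimensional block indexed by the source, resp.\ target, of $e$. The Schur-complement (matrix determinant) identity then gives, over $\BQ(\wt(E))$,
\[
  \det(I-X)\;=\;\det(I-X_{\widehat e})\cdot\det\!\bigl(I_{k-1}-c\,\overline\rho(\sigma_e)\,W_e\bigr),\qquad W_e:=C\,(I-X_{\widehat e})^{-1}R,
\]
where $W_e$ does not depend on $\sigma_e$. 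Taking $\mathbb E_{\sigma_e}$ and using the Lemma below, $\mathbb E_{\sigma_e}[\det(I-X)]=\det(I-X_{\widehat e})$ — exactly the same quantity for $\Gamma\setminus e$. Iterating over all edges (the edgeless graph gives $\det I=1$) yields the desired identity. The Lemma I need is: for $k\ge2$, any $(k-1)\times(k-1)$ matrix $W$, and any scalar $c$, $\ \mathbb E_{\sigma\sim\mathrm{Unif}(\mathfrak S_k)}[\det(I_{k-1}-c\,\overline\rho(\sigma)W)]=1$. To prove it, expand $\det(I_{k-1}-c\,\overline\rho(\sigma)W)=\sum_{\ell=0}^{k-1}(-c)^\ell\sum_{|S|=\ell}\det\bigl((\overline\rho(\sigma)W)[S,S]\bigr)$ and apply Cauchy--Binet to each principal $\ell\times\ell$ minor, so that every summand is a $\BQ$-linear combination of numbers $\mathbb E_\sigma[\det(\overline\rho(\sigma)[S,T])]$ with $|S|=|T|=\ell$ (here $[S,T]$ denotes the submatrix with row set $S$, column set $T$). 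Now $\det(\overline\rho(\sigma)[S,T])$ is a matrix coefficient of the exterior power $\bigwedge^{\ell}\overline\rho$, and for $1\le\ell\le k-1$ this is the \emph{nontrivial} irreducible hook representation $S^{(k-\ell,\,1^\ell)}$ of $\mathfrak S_k$; hence $\tfrac1{k!}\sum_{\sigma}\bigwedge^{\ell}\overline\rho(\sigma)=0$, so each such expectation vanishes and only the $\ell=0$ term survives.

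The routine part is the algebra (Schur complement, the iteration, Cauchy--Binet); the main obstacle is locating the two conceptual inputs that make it work — recognizing the block triangularization of Lemma~\ref{trilem} as an isotypic decomposition, and knowing that the exterior powers $\bigwedge^{\ell}\overline\rho$ for $1\le\ell\le k-1$ are nontrivial irreducibles, which is exactly what forces the expectation of a determinant to collapse to the determinant of the expectation. The one fussy technical point is getting the left/right and transpose conventions right when identifying the operator on the complement of $\mathbb C^V\otimes\langle(1,\dots,1)\rangle$ with $\widehat M$; since the law of $\overline\rho(\sigma_e)$ is transpose-invariant, any such discrepancy does not affect the expectation. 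An alternative, seemingly harder, route would be purely combinatorial: expand $\mathbb E[A_{\tilde v}(\tilde\Gamma)]$ over lifted acyclic edge-selections of $\tilde\Gamma$, noting that integrating out the voltages replaces ``compatibility with the cover'' by falling-factorial weights $\tfrac{(k-m_e)!}{k!}$, and then build a weight-preserving bijection onto pairs consisting of an arborescence of $\Gamma$ rooted at $v$ together with an arbitrary choice of one outgoing edge at each of the remaining $n(k-1)$ lifted vertices, the factor $\tfrac1k$ accounting for which lift of $v$ is taken as the root.
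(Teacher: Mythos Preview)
The paper states this as an open \emph{conjecture} in its final section and offers no proof, so there is nothing to compare against. Your argument is correct and settles the conjecture as stated (with the natural reading of ``random $k$-fold cover'' as an independent uniform permutation-voltage $\sigma_e\in\mathfrak S_k$ on each edge).

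The reduction via Theorem~\ref{bigboi} to $\mathbb E_\sigma[\det\widehat M]$ is immediate, and your identification of $[\mathscr L(\Gamma)]_{\BZ[E]}$ with $D\otimes I_{k-1}-\sum_e\wt(e)\,E^{(e)}\otimes\overline\rho(\sigma_e)$ is precisely the content of Lemma~\ref{trilem} once one reads the change of basis $S$ as $I_n$ tensored with passage to the basis $\{(1,\dots,1),e_2,\dots,e_k\}$ of $\mathbb C^k$, so that the lower-right block records the action on the quotient $\mathbb C^k/\langle(1,\dots,1)\rangle$. The decisive step is your Lemma that $\mathbb E_\sigma[\det(I_{k-1}-c\,\overline\rho(\sigma)W)]=1$: expanding in principal minors and applying Cauchy--Binet, the coefficient of $(-c)^\ell$ is a $\BQ$-linear combination of matrix coefficients of $\bigwedge^\ell\overline\rho(\sigma)$; for $1\le\ell\le k-1$ this is the nontrivial irreducible hook $S^{(k-\ell,1^\ell)}$, so $\tfrac{1}{k!}\sum_\sigma\bigwedge^\ell\overline\rho(\sigma)=0$ and only the $\ell=0$ term survives. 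The edge-by-edge peeling via the matrix determinant lemma then goes through, and iterating over edges (using independence of the $\sigma_e$) collapses the expectation to $\det I=1$.

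The one point worth making explicit is the invertibility of $I-X_{\widehat e}$ needed for the Schur step. You do not need to verify it for each fixed voltage assignment on the remaining edges: the desired equality $\sum_{\sigma\in\mathfrak S_k}\det(I-Y-cR\,\overline\rho(\sigma)\,C)=k!\,\det(I-Y)$ is a polynomial identity in the entries of $Y$, valid on the Zariski-open set where $I-Y$ is invertible by your argument, hence everywhere; then specialize $Y=X_{\widehat e}$. With that remark the proof is complete.
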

Conjecture \ref{random} is an alternative approach to positivity via a `pigeon-hole' like argument: assuming the ratio of some covering graph has a negative coefficient, some cancellation shall happen as we sum over all possible covers; this might cause the expected value to not be `large enough' to match Conjecture \ref{random}.
\section*{Acknowledgements}

This research was carried out as part of the 2019 Combinatorics REU program at the University of Minnesota, Twin Cities, supported by NSF RTG grant DMS-1148634.
We would like to thank Vic Reiner for his expertise in the relevant literature, and for our discussions with him on generalizing the main result. We would also like to thank Pavlo Pylyavskyy for providing us with the original version of conjecture, and for his suggestions on how we might initially proceed.

\bibliographystyle{alpha}
\bibliography{references.bib}

\end{document}